\def\BibCommonsStandalone{}

\documentclass[11pt]{article}
\usepackage{adjustbox,aliascnt,amsmath,amssymb,amsthm,calc,colonequals,enumitem,mathtools,mleftright,parskip,stmaryrd,textgreek,xcolor}
\usepackage[a4paper,left=1.3in,right=1.3in,top=1.25in,bottom=1.25in,footskip=0.6in]{geometry}
\usepackage[unicode]{hyperref}
\usepackage[en-GB]{datetime2}

\usepackage{bib-commons}

\newcommand{\newaliastheorem}[2]{%
  \newaliascnt{#1}{theorem}
  \newtheorem{#1}[#1]{#2}
  \aliascntresetthe{#1}
  \expandafter\def\csname #1autorefname\endcsname{#2}
}

\theoremstyle{plain}
\newtheorem{theorem}{Theorem}[section]
\newaliastheorem{lemma}{Lemma}
\newaliastheorem{proposition}{Proposition}
\newaliastheorem{corollary}{Corollary}

\newcommand{\tuple}[1]{\left\langle #1 \right\rangle}

\newcommand{\Ord}{\mathrm{Ord}}
\newcommand{\PlOrd}{\mathrm{PlOrd}}

\newcommand{\pl}{\mathrm{pl}}
\newcommand{\pls}{{\pl +}}

\DeclareMathOperator{\dd}{def}

\DeclareMathOperator{\relpl}{relpl}

\title{Some notes on plump ordinals}
\author{Shuwei Wang}
\date{\DTMdate{2026-01-30}}

\begin{document}

\maketitle

\begin{abstract}
  In this exposition, we attempt to formalise a treatment of Paul Taylor's notion of plump ordinals \cite{taylor96-intuitionistic-ordinals} in weak intuitionistic axiomatic set theories such as $\mathrm{IKP}$. We will explore basic properties of plump ordinals, especially in relation to G\"odel's constructible universe $L$ and incomparable codings. As a quick application, we explain at the end how plump ordinals can be used to build a Heyting-valued model $V^\mathbb{H}$ from a classical $V \vDash \mathrm{ZFC}$ such that for some arbitrary, fixed $x \in V$ we have
  \[V^\mathbb{H} \vDash \mathcal{P}\mleft(\check{x}\mright) \in L.\]
\end{abstract}

\section{Introduction}
\label{sec:intro}

Paul Taylor introduced the notion of plump ordinals in \cite{taylor96-intuitionistic-ordinals} as a distinguished sub-class of usual (transitive) ordinals with tamer properties in intuitionistic set theory. However, his work is based in the context of category-theoretic ensembles, and does not characterise precisely how much first-order axiomatic set theory is needed to set-up such a notion (which seemingly requires recursive constructions over the powerset operator). In this exposition, we shall reconsider Taylor's idea formally in the weak base theory $\mathrm{IKP}$ which includes strong infinity, adding additional axioms explicitly where necessary.

To begin, we follow Taylor and say that a class $\mathcal{O} \subseteq \Ord$ is \emph{the class of plump ordinals} if for any $\alpha \in \Ord$, we have $\alpha \in \mathcal{O}$ if and only if both of the following hold:
\begin{itemize}
  \item $\alpha \subseteq \mathcal{O}$,
  \item for any $\beta \in \alpha$, the class $\mathcal{P}\mleft(\beta\mright) \cap \mathcal{O} \subseteq \alpha$.
\end{itemize}

Formally speaking, this is not a definition for the class $\mathcal{O}$, because it is self-referential and some form of recursion theorem is needed to justify that such a class $\mathcal{O}$ is indeed definable. But first observe that the class of plump ordinals, if it exists, is at least unique:

\begin{proposition}
  Suppose that $\mathcal{O}_1, \mathcal{O}_2$ are both ``classes of plump ordinals'' as defined above, then $\mathcal{O}_1 = \mathcal{O}_2$.
\end{proposition}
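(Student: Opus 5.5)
The plan is to prove, by $\in$-induction on $\alpha \in \Ord$, the statement
\[\Phi(\alpha) \colon\quad \alpha \in \mathcal{O}_1 \iff \alpha \in \mathcal{O}_2 .\]
Set induction (available in $\mathrm{IKP}$ as a scheme) applies to any formula, and $\Phi$ is a formula in the defining formulas of $\mathcal{O}_1, \mathcal{O}_2$. Elements of ordinals are ordinals, so restricting the induction to $\Ord$ is harmless: the statement we actually induct on is $\alpha \in \Ord \to \Phi(\alpha)$. The whole argument is symmetric in the indices $1,2$, so it suffices to show $\alpha \in \mathcal{O}_1 \to \alpha \in \mathcal{O}_2$ under the induction hypothesis that $\Phi(\gamma)$ holds for all $\gamma \in \alpha$. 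Everything used is intuitionistically valid; in particular no case split on membership is needed.

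Assume $\alpha \in \mathcal{O}_1$. By the defining property of $\mathcal{O}_1$ we get $\alpha \subseteq \mathcal{O}_1$, and for every $\beta \in \alpha$ we get $\mathcal{P}(\beta) \cap \mathcal{O}_1 \subseteq \alpha$. We verify the two clauses for $\mathcal{O}_2$ in turn.

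\emph{First clause.} Take $\gamma \in \alpha$. Then $\gamma \in \mathcal{O}_1$, so the induction hypothesis gives $\gamma \in \mathcal{O}_2$. Hence $\alpha \subseteq \mathcal{O}_2$.

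\emph{Second clause.} Take $\beta \in \alpha$ and $\delta \in \mathcal{P}(\beta) \cap \mathcal{O}_2$; here $\delta$ is an ordinal, since $\mathcal{O}_2 \subseteq \Ord$. We must show $\delta \in \alpha$, but the induction hypothesis cannot be applied to $\delta$ directly, because we do not yet know $\delta \in \alpha$. This is the only delicate point. The remedy is to apply the induction hypothesis to $\beta$ instead. Since $\beta \in \alpha$ and $\alpha \subseteq \mathcal{O}_1$, we have $\beta \in \mathcal{O}_1$, so $\Phi(\beta)$ gives $\beta \in \mathcal{O}_2$. Then $\Phi(\beta)$ is usable, and since an $\in$-induction hypothesis is only available at elements of $\alpha$, we pass to the transitive-closure form of the induction: prove $\forall \gamma \in \mathrm{TC}(\{\alpha\})\,\Phi(\gamma)$, or simply note that $\Phi(\gamma)$ holds for every $\gamma \in \beta$ by transitivity of $\alpha$. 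For $\gamma \in \delta \subseteq \beta$, transitivity of $\alpha$ puts $\gamma \in \alpha$.

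Now $\delta \in \mathcal{O}_2$, and we want $\delta \in \mathcal{O}_1$, so we check the two defining clauses for $\mathcal{O}_1$ at $\delta$.
\begin{itemize}
  \item $\delta \subseteq \mathcal{O}_1$: each $\gamma \in \delta$ lies in $\mathcal{O}_2$ and in $\alpha$, so $\Phi(\gamma)$ applies.
  \item For $\eta \in \delta$ and $\zeta \in \mathcal{P}(\eta) \cap \mathcal{O}_1$: here $\zeta \subseteq \eta \subseteq \beta$ and $\zeta \in \mathcal{O}_1$, so $\zeta \in \mathcal{P}(\beta) \cap \mathcal{O}_1 \subseteq \alpha$ by the plumpness of $\alpha$ in $\mathcal{O}_1$. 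Thus $\Phi(\zeta)$ holds, giving $\zeta \in \mathcal{O}_2$. The plumpness of $\delta$ in $\mathcal{O}_2$ then gives $\zeta \in \delta$.
\end{itemize}
Hence $\delta \in \mathcal{O}_1$. Applying plumpness of $\alpha$ in $\mathcal{O}_1$ to $\beta$ yields $\delta \in \mathcal{P}(\beta) \cap \mathcal{O}_1 \subseteq \alpha$. This completes the second clause, so $\alpha \in \mathcal{O}_2$.

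The main obstacle is exactly that second clause: the subset $\delta$ need not lie below $\alpha$ a priori. The key observation is that every element of $\delta$, and every plump subset of an element of $\delta$, already lands inside $\alpha$, where the induction hypothesis is available. No powerset or separation axiom beyond what the definitions already assume is used.
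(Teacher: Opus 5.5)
Your proof is correct, but it is organised differently from the paper's.

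\textbf{The paper's route.} The paper strengthens the induction statement to: for $\alpha \in \mathcal{O}_1 \cup \mathcal{O}_2$, $\mathcal{P}(\alpha) \cap \mathcal{O}_1 = \mathcal{P}(\alpha) \cap \mathcal{O}_2$. With that hypothesis, showing that an $\mathcal{O}_1$-member $\beta \subseteq \alpha$ lies in $\mathcal{O}_2$ is immediate. For $\gamma \in \beta$, the hypothesis at $\gamma$ gives $\gamma \in \mathcal{O}_2$ (since $\gamma \in \mathcal{P}(\gamma)$). It also gives $\mathcal{P}(\gamma) \cap \mathcal{O}_2 = \mathcal{P}(\gamma) \cap \mathcal{O}_1 \subseteq \beta$. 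One then finishes by taking $\beta = \alpha$.

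\textbf{Your route.} You keep the minimal hypothesis $\alpha \in \mathcal{O}_1 \leftrightarrow \alpha \in \mathcal{O}_2$. To compensate, you unfold the definition one level deeper at $\delta \in \mathcal{P}(\beta) \cap \mathcal{O}_2$. The point that makes this work is that the second clause of $\alpha \in \mathcal{O}_1$ forces every $\zeta \in \mathcal{P}(\eta) \cap \mathcal{O}_1$ with $\eta \in \delta$ into $\alpha$, so the induction hypothesis is available at $\zeta$. In effect you re-derive on the fly what the paper carries in its induction hypothesis. You do this only for subsets of some $\beta \in \alpha$, and only in the direction from $\mathcal{O}_2$-membership to $\mathcal{O}_1$-membership.

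\textbf{What each buys.} Your version has a simpler induction formula and makes visible exactly where the plumpness of $\alpha$ is needed. The paper's version avoids the nested verification and handles both clauses uniformly, at the price of an induction statement that quantifies over subsets.

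\textbf{One cleanup.} The detour where you apply the hypothesis to $\beta$ to obtain $\beta \in \mathcal{O}_2$, and then mention the transitive-closure form of induction, is unnecessary. The fact $\beta \in \mathcal{O}_2$ is never used afterwards. Every set to which you actually apply the hypothesis ($\gamma \in \delta$ and $\zeta$) has already been shown to be an element of $\alpha$, so ordinary $\in$-induction suffices.
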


\begin{proof}
  We shall prove by set induction that for any $\alpha \in \mathcal{O}_1 \cup \mathcal{O}_2$, the classes
  \[\mathcal{P}\mleft(\alpha\mright) \cap \mathcal{O}_1 = \mathcal{P}\mleft(\alpha\mright) \cap \mathcal{O}_2.\]

  To begin, assume that $\alpha \in \mathcal{O}_1$, and we shall first show using the inductive hypothesis that $\mathcal{P}\mleft(\alpha\mright) \cap \mathcal{O}_1 \subseteq \mathcal{P}\mleft(\alpha\mright) \cap \mathcal{O}_2$. Consider any $\beta \subseteq \alpha$ such that $\beta \in \mathcal{O}_1$, then for any $\gamma \in \beta \subseteq \alpha$, we also have $\gamma \in \mathcal{O}_1$. By the inductive hypothesis,
  \[\gamma \in \mathcal{P}\mleft(\gamma\mright) \cap \mathcal{O}_1 = \mathcal{P}\mleft(\gamma\mright) \cap \mathcal{O}_2,\]
  i.e.\ $\gamma \in \mathcal{O}_2$ as well. Therefore, $\beta \subseteq \mathcal{O}_2$.

  Additionally, for any $\gamma \in \beta \subseteq \alpha$, we have $\mathcal{P}\mleft(\gamma\mright) \cap \mathcal{O}_1 \subseteq \beta$. Thus again by the inductive hypothesis,
  \[\mathcal{P}\mleft(\gamma\mright) \cap \mathcal{O}_2 = \mathcal{P}\mleft(\gamma\mright) \cap \mathcal{O}_1 \subseteq \beta.\]
  Therefore, both criteria are verified, and by the definition of $\mathcal{O}_2$, we must also have $\beta \in \mathcal{O}_2$.

  Now, by an entirely symmetrical argument, we can show that $\alpha \in \mathcal{O}_2$ implies $\mathcal{P}\mleft(\alpha\mright) \cap \mathcal{O}_2 \subseteq \mathcal{P}\mleft(\alpha\mright) \cap \mathcal{O}_1$. Notice that as $\alpha \in \mathcal{P}\mleft(\alpha\mright)$, this suffices for the conclusion that for any $\alpha \in \mathcal{O}_1 \cup \mathcal{O}_2$, we have
  \[\alpha \in \mathcal{O}_1 \leftrightarrow \alpha \in \mathcal{O}_2.\]
  In other words, $\alpha \in \mathcal{O}_1 \cup \mathcal{O}_2$ implies both $\alpha \in \mathcal{O}_1$ and $\alpha \in \mathcal{O}_2$, and thus $\mathcal{P}\mleft(\alpha\mright) \cap \mathcal{O}_1 = \mathcal{P}\mleft(\alpha\mright) \cap \mathcal{O}_2$ as desired.

  Finally, simply observe that we have already shown $\mathcal{O}_1 = \mathcal{O}_2$ in this process.
\end{proof}

Now, using a similar recursion, it is not hard to show that such a class $\mathcal{O}$ exists as a $\Sigma_1\mleft(\mathcal{P}\mright)$-class, in the theory $\mathrm{IKP}\mleft(\mathcal{P}\mright)$ (where in addition to the axioms of $\mathrm{IKP}$, we have a primitive powerset operator $\mathcal{P}$ that can appear in the clauses of $\Delta_0$-separation and collection). Following the construction in Taylor's original proof of \cite{taylor96-intuitionistic-ordinals}*{Proposition 4.3}, we can define the class function $\vartheta_\alpha\mleft(\beta\mright)$, which maps two parameters $\alpha \in \Ord$, $\beta \in \mathcal{P}\mleft(\alpha\mright)$ to an element in the set $\Omega = \mathcal{P}\mleft(1\mright)$ of all truth values, by recursion on $\alpha$ as the following:
\[\vartheta_\alpha\mleft(\beta\mright) = \left\{\varnothing : \forall \gamma \in \beta \left(\varnothing \in \vartheta_\gamma\mleft(\gamma\mright) \land \forall \delta \subseteq \gamma \left(\varnothing \in \vartheta_\gamma\mleft(\delta\mright) \rightarrow \delta \in \beta\right)\right)\right\}.\]
Then $\mathcal{O} = \left\{\alpha \in \Ord : \vartheta_\alpha\mleft(\alpha\mright) = 1\right\}$ will be the class of plump ordinals.

However, for our purpose of studying the interaction of the plump ordinals and G\"odel's constructible universe $L$, it can be difficult to adapt this construction since the powerset axiom is known to fail in $L$ even when it holds in $V$ (cf.\ \cite{matthews-rathjen24-constructible-universe}*{section 7}), if the background theory does not have $\Sigma_1$-separation.

Thus, in this exposition we shall start by developing an alternative definition that characterise the plump ordinals without involving the powersets. The goal is to show that the class of plump ordinals, as characterised above, exists in the base theory $\mathrm{IKP}$ already. Afterwards we shall observe its basic properties, some not available to the larger class $\Ord$, especially in relation to $L$. For example, while it remains open whether any intuitionistic set theory, even the stronger ones like $\mathrm{IZF}$, prove that the class $L$ contains all ordinals, it is much simpler to show (already in $\mathrm{IKP}$) that $L$ contains all plump ordinals. We will look at how this can be useful together with arithmetic on the plump ordinals and techniques of incomparable coding.

\section{A formal definition of plumpness}

To grasp the precise formal complexity of the class of plump ordinals, we now opt to write down an explicit definition for the class. We denote
\[\relpl_\alpha\mleft(\gamma\mright) \coloneqq \forall \delta \in \gamma \ \forall \varepsilon \in \alpha \left(\varepsilon \subseteq \delta \rightarrow \varepsilon \in \gamma\right)\]
and say that \emph{$\gamma$ is plump relative to $\alpha$} when this holds. We then define the following sub-class of $\Ord$:
\[\PlOrd = \left\{\alpha \in \Ord : \forall \beta \in \alpha \ \forall \gamma \subseteq \beta \left(\relpl_\alpha\mleft(\gamma\mright) \rightarrow \gamma \in \alpha \land \forall \delta \in \alpha \left(\beta \in \delta \rightarrow \gamma \in \delta\right)\right)\right\}.\]

We observe the following basic properties:

\begin{lemma}
  \label{lem:pl-ord-elem-rel-pl}
  For any $\alpha \in \PlOrd$, if $\beta \in \alpha$, then $\relpl_\alpha\mleft(\beta\mright)$ holds.
\end{lemma}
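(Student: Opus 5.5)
The plan is to avoid the circularity that arises if we try to verify $\relpl_\alpha\mleft(\beta\mright)$ directly from the definition of $\PlOrd$. Applying that definition to a subset $\gamma$ already requires knowing $\relpl_\alpha\mleft(\gamma\mright)$. So we first prove an auxiliary statement by set induction: for every set $\delta$,
\[S\mleft(\delta\mright) \colonequals \forall \varepsilon \in \alpha \left(\varepsilon \subseteq \delta \rightarrow \relpl_\alpha\mleft(\varepsilon\mright)\right).\]
This statement is $\Delta_0$ in the parameter $\alpha$, so set induction for it is available in $\mathrm{IKP}$.

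For the inductive step, assume $S\mleft(\delta'\right)$ for all $\delta' \in \delta$. Let $\varepsilon \in \alpha$ with $\varepsilon \subseteq \delta$; we must show $\relpl_\alpha\mleft(\varepsilon\mright)$. So take $\delta' \in \varepsilon$ and $\varepsilon' \in \alpha$ with $\varepsilon' \subseteq \delta'$; the goal is $\varepsilon' \in \varepsilon$.

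\begin{itemize}
  \item Since $\delta' \in \varepsilon \subseteq \delta$, we have $\delta' \in \delta$. The inductive hypothesis $S\mleft(\delta'\mright)$ then yields $\relpl_\alpha\mleft(\varepsilon'\mright)$.
  \item Since $\alpha$ is transitive, $\delta' \in \alpha$.
  \item We now apply the defining clause of $\alpha \in \PlOrd$ with $\delta'$ in the role of $\beta$ and $\varepsilon' \subseteq \delta'$ in the role of $\gamma$. Its hypothesis $\relpl_\alpha\mleft(\varepsilon'\mright)$ holds, so for every member of $\alpha$ containing $\delta'$, that member also contains $\varepsilon'$.
  \item Instantiating this at $\varepsilon$, which lies in $\alpha$ and contains $\delta'$, gives $\varepsilon' \in \varepsilon$.
\end{itemize}

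This completes the induction, so $S\mleft(\delta\mright)$ holds for every set $\delta$.

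The lemma then follows by one more application of the same clause. Let $\beta \in \alpha$, and take $\delta \in \beta$ and $\varepsilon \in \alpha$ with $\varepsilon \subseteq \delta$; we must show $\varepsilon \in \beta$.

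\begin{itemize}
  \item By $S\mleft(\delta\mright)$ we get $\relpl_\alpha\mleft(\varepsilon\mright)$.
  \item Transitivity of $\alpha$ gives $\delta \in \alpha$.
  \item Applying the defining clause of $\PlOrd$ with $\delta$ in the role of $\beta$ and $\varepsilon \subseteq \delta$ in the role of $\gamma$, every member of $\alpha$ containing $\delta$ also contains $\varepsilon$.
  \item Instantiating at $\beta$, which lies in $\alpha$ and contains $\delta$, gives $\varepsilon \in \beta$.
\end{itemize}

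The only real obstacle is choosing the right induction. Inducting on $\beta$ or on $\varepsilon$ fails, because the set $\varepsilon'$ that needs the hypothesis is not an element of the inductive variable. Inducting on the bounding set $\delta$ works because $\varepsilon'$ sits below $\delta'$, and $\delta'$ is an element of $\delta$.
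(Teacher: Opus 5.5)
Your proof is correct and is essentially the paper's argument. The paper also strengthens the claim to $\forall \gamma \in \alpha \left(\gamma \subseteq \beta \rightarrow \relpl_\alpha(\gamma)\right)$ and proves it by set induction on the bounding set, with the same inductive step; your closing step just repeats that step, and you could instead instantiate $S(\beta)$ at $\varepsilon = \beta$, which is how the paper's strengthened statement yields the lemma directly.
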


\begin{proof}
  We show by set induction on $\beta$ that if $\beta \in \alpha$, then
  \[\forall \gamma \in \alpha \left(\gamma \subseteq \beta \rightarrow \relpl_\alpha\mleft(\gamma\mright)\right)\]
  holds.

  Fix $\gamma, \beta \in \alpha$ such that $\gamma \subseteq \beta$, and assume that $\delta \in \gamma$, $\varepsilon \in \alpha$ and $\varepsilon \subseteq \delta$. It follows that $\delta \in \beta$ (and $\delta \in \alpha$ by transitivity), thus by inductive hypothesis $\relpl_\alpha\mleft(\varepsilon\mright)$ holds. We know that $\varepsilon \subseteq \delta$, $\gamma \in \alpha$ and $\delta \in \gamma$, thus the assumption $\alpha \in \PlOrd$ must imply that $\varepsilon \in \gamma$, precisely what we need to conclude that $\relpl_\alpha\mleft(\gamma\mright)$ holds.
\end{proof}

\begin{lemma}
  \label{lem:pl-ord-subset-rel-pl-is-pl-ord}
  For any $\alpha \in \PlOrd$, if $\beta \subseteq \alpha$ and $\relpl_\alpha\mleft(\beta\mright)$ holds, then $\beta \in \PlOrd$.
\end{lemma}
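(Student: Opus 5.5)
The plan is to unfold the definitions and verify the two requirements for membership in $\PlOrd$ directly; no induction should be needed. Throughout, fix $\alpha \in \PlOrd$ and $\beta \subseteq \alpha$ with $\relpl_\alpha\mleft(\beta\mright)$.

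\emph{Step 1: $\beta \in \Ord$.} Every element of $\beta$ is an element of $\alpha$, hence a transitive set, so it suffices to show that $\beta$ is transitive. Take $\delta \in \beta$ and $\varepsilon \in \delta$. Since $\alpha$ is transitive, $\varepsilon \in \alpha$. Since $\delta$ is an ordinal, hence transitive, $\varepsilon \subseteq \delta$. Now $\relpl_\alpha\mleft(\beta\mright)$ applied to $\delta \in \beta$ and $\varepsilon \in \alpha$ yields $\varepsilon \in \beta$.

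\emph{Step 2: transfer relative plumpness from $\beta$ to $\alpha$.} This is the key step. Suppose $b \in \beta$, $\gamma \subseteq b$ and $\relpl_\beta\mleft(\gamma\mright)$. I claim that $\relpl_\alpha\mleft(\gamma\mright)$ holds. Take $d \in \gamma$ and $e \in \alpha$ with $e \subseteq d$.
\begin{enumerate}[label=(\roman*)]
  \item From $d \in \gamma \subseteq b \in \beta$ and the transitivity of $\beta$ (Step 1), we get $d \in \beta$.
  \item Then $\relpl_\alpha\mleft(\beta\mright)$, applied to $d \in \beta$ and $e \in \alpha$, gives $e \in \beta$.
  \item Finally $\relpl_\beta\mleft(\gamma\mright)$, applied to $d \in \gamma$ and $e \in \beta$, gives $e \in \gamma$.
\end{enumerate}

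\emph{Step 3: verify the defining clause of $\PlOrd$ for $\beta$.} Since $b \in \beta \subseteq \alpha$ and $\gamma \subseteq b$, Step 2 and $\alpha \in \PlOrd$ give $\gamma \in \alpha$ and $\forall \delta \in \alpha \left(b \in \delta \rightarrow \gamma \in \delta\right)$.
\begin{enumerate}[label=(\roman*)]
  \item The second conclusion restricts to all $\delta \in \beta$ because $\beta \subseteq \alpha$.
  \item To get $\gamma \in \beta$, apply $\relpl_\alpha\mleft(\beta\mright)$ to $b \in \beta$ and $\gamma \in \alpha$ with $\gamma \subseteq b$.
\end{enumerate}
Together these give exactly the defining condition for $\beta \in \PlOrd$.

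The only point needing care is Step 2. There one must route the witness $e$ through $\beta$ using $\relpl_\alpha\mleft(\beta\mright)$ before invoking $\relpl_\beta\mleft(\gamma\mright)$. Everything else is bookkeeping and is intuitionistically valid, since only $\Delta_0$ reasoning with no case splits is used.
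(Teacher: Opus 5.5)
Your proof is correct and follows essentially the same route as the paper. You first show $\beta$ is transitive via $\relpl_\alpha(\beta)$, then transfer $\relpl_\beta(\gamma)$ to $\relpl_\alpha(\gamma)$ by routing the witness through $\beta$, and finally apply $\alpha \in \PlOrd$, using $\relpl_\alpha(\beta)$ to get $\gamma \in \beta$ and $\beta \subseteq \alpha$ to restrict the second clause, slightly more carefully than the paper since you attribute $\forall \delta \in \alpha\,(b \in \delta \rightarrow \gamma \in \delta)$ to $\alpha \in \PlOrd$ rather than to $\relpl_\alpha(\gamma)$ alone.
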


\begin{proof}
  First notice that $\beta \subseteq \alpha$ and $\relpl_\alpha\mleft(\beta\mright)$ implies that $\beta$ is transitive. Consider any $\gamma \in \beta$, $\delta \in \gamma$, we know that $\delta \in \alpha$ since $\alpha \in \Ord$ and also $\delta \subseteq \gamma$ since $\gamma \in \Ord$. It follows from $\relpl_\alpha\mleft(\beta\mright)$ that we must have $\delta \in \beta$.

  Now suppose that $\beta' \in \beta$, $\gamma \subseteq \beta'$ and $\relpl_\beta\mleft(\gamma\mright)$ holds. For any $\delta \in \gamma$ and $\varepsilon \in \alpha$ such that $\varepsilon \subseteq \delta$, we know by transitivity that $\delta \in \beta$, so it follows from $\relpl_\alpha\mleft(\beta\mright)$ that $\varepsilon \in \beta$, and it follows again from $\relpl_\beta\mleft(\gamma\mright)$ that $\varepsilon \in \gamma$. In other words, $\relpl_\alpha\mleft(\gamma\mright)$ holds.

  From the assumption that $\alpha \in \PlOrd$, we then know that $\gamma \in \alpha$. Since also $\beta' \in \beta$ and $\gamma \subseteq \beta'$, it follows from $\relpl_\alpha\mleft(\beta\mright)$ that $\gamma \in \beta$. For any $\delta \in \beta$, we now have $\beta' \in \alpha$, $\gamma \subseteq \beta'$ and $\delta \in \alpha$, so it follows from $\relpl_\alpha\mleft(\gamma\mright)$ that $\beta' \in \delta$ implies $\gamma \in \delta$. We have shown that $\beta \in \PlOrd$.
\end{proof}

\begin{proposition}
  \label{prop:pl-ord-class-trans}
  For any $\alpha \in \PlOrd$, if $\beta \in \alpha$, then $\beta \in \PlOrd$.
\end{proposition}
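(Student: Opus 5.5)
This follows directly by combining the two preceding lemmas. Fix $\alpha \in \PlOrd$ and $\beta \in \alpha$. The plan is to check the two hypotheses of Lemma~\ref{lem:pl-ord-subset-rel-pl-is-pl-ord} for $\beta$, namely $\beta \subseteq \alpha$ and $\relpl_\alpha\mleft(\beta\mright)$. Then $\beta \in \PlOrd$ follows immediately.

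For the first hypothesis, I use that $\PlOrd \subseteq \Ord$ by definition, so $\alpha$ is transitive. Hence $\beta \in \alpha$ gives $\beta \subseteq \alpha$.

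For the second hypothesis, Lemma~\ref{lem:pl-ord-elem-rel-pl} applies directly to $\alpha \in \PlOrd$ and $\beta \in \alpha$, and it yields $\relpl_\alpha\mleft(\beta\mright)$.

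There is no real obstacle here. The substantive work, namely the set induction showing that elements of a plump ordinal are relatively plump and the verification of the $\PlOrd$ clauses for relatively plump subsets, has already been done in the two lemmas. I would only double-check that Lemma~\ref{lem:pl-ord-subset-rel-pl-is-pl-ord} does not secretly require $\beta \notin \alpha$ or any similar side condition. It does not: its proof uses only $\beta \subseteq \alpha$, $\relpl_\alpha\mleft(\beta\mright)$ and $\alpha \in \PlOrd$. Everything used is intuitionistically valid and available in $\mathrm{IKP}$.
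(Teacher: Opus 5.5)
Your proposal is correct and matches the paper's proof exactly. You obtain $\relpl_\alpha\mleft(\beta\mright)$ from \autoref{lem:pl-ord-elem-rel-pl} and $\beta \subseteq \alpha$ from the transitivity of $\alpha$, and then conclude $\beta \in \PlOrd$ by \autoref{lem:pl-ord-subset-rel-pl-is-pl-ord}.
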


\begin{proof}
  We have $\relpl_\alpha\mleft(\beta\mright)$ by \autoref{lem:pl-ord-elem-rel-pl}, and we also have $\beta \subseteq \alpha$ by transitivity. Thus we have $\beta \in \PlOrd$ by \autoref{lem:pl-ord-subset-rel-pl-is-pl-ord}.
\end{proof}

\begin{lemma}
  \label{lem:subset-pl-ord-sat-relpl}
  For any $\alpha, \beta \in \PlOrd$, if $\beta \subseteq \alpha$, then $\relpl_\alpha\mleft(\beta\mright)$ holds.
\end{lemma}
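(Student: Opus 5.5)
The plan is to unfold $\relpl_\alpha\mleft(\beta\mright)$ and reduce the goal to the defining clause of $\beta \in \PlOrd$. The only outside ingredient is \autoref{lem:pl-ord-elem-rel-pl}, applied to $\alpha$. Fix $\delta \in \beta$ and $\varepsilon \in \alpha$ with $\varepsilon \subseteq \delta$; we must show $\varepsilon \in \beta$.

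The membership $\beta \in \PlOrd$ says: for $\delta \in \beta$ and $\gamma \subseteq \delta$, if $\relpl_\beta\mleft(\gamma\mright)$ holds then $\gamma \in \beta$. We take $\gamma \coloneqq \varepsilon$. Since $\varepsilon \subseteq \delta$ and $\delta \in \beta$, it suffices to verify $\relpl_\beta\mleft(\varepsilon\mright)$.

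For this, take $\zeta \in \varepsilon$ and $\eta \in \beta$ with $\eta \subseteq \zeta$; we must show $\eta \in \varepsilon$.
\begin{itemize}
  \item Since $\beta \subseteq \alpha$, we have $\eta \in \alpha$.
  \item Since $\varepsilon \in \alpha$ and $\alpha \in \PlOrd$, \autoref{lem:pl-ord-elem-rel-pl} gives $\relpl_\alpha\mleft(\varepsilon\mright)$.
  \item Applying $\relpl_\alpha\mleft(\varepsilon\mright)$ to $\zeta \in \varepsilon$ and $\eta \in \alpha$ with $\eta \subseteq \zeta$ yields $\eta \in \varepsilon$.
\end{itemize}
This establishes $\relpl_\beta\mleft(\varepsilon\mright)$, so $\varepsilon \in \beta$ by the previous paragraph.

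I do not expect a real obstacle. The one point needing care is the direction of the relativisations. $\relpl_\beta$ quantifies over candidates from $\beta$, and these are handled by enlarging to $\alpha$, where \autoref{lem:pl-ord-elem-rel-pl} applies. The step is intuitionistically valid, since it uses only direct implications and no case distinctions.
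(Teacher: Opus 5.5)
Your proof is correct and is the paper's argument: you obtain $\relpl_\alpha\mleft(\varepsilon\mright)$ from \autoref{lem:pl-ord-elem-rel-pl}, restrict it to $\relpl_\beta\mleft(\varepsilon\mright)$ using $\beta \subseteq \alpha$, and then apply the defining clause of $\beta \in \PlOrd$ with $\varepsilon \subseteq \delta \in \beta$. The only difference is that you write out the restriction step, which the paper calls ``easy to check''.
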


\begin{proof}
  Suppose that $\delta \in \beta$, $\varepsilon \in \alpha$ and $\varepsilon \subseteq \delta$. By \autoref{lem:pl-ord-elem-rel-pl}, we know that $\relpl_\alpha\mleft(\varepsilon\mright)$ holds. Since $\beta \subseteq \alpha$, it is easy to check that $\relpl_\beta\mleft(\varepsilon\mright)$ also holds. Then the assumption that $\beta \in \PlOrd$ immediately implies $\varepsilon \in \beta$, precisely what we need to show.
\end{proof}

\begin{proposition}
  \label{prop:pl-ord-closed-under-pl-ord-subset}
  For any $\alpha, \gamma \in \PlOrd$, if $\gamma \subseteq \beta$ for some $\beta \in \alpha$, then $\gamma \in \alpha$.
\end{proposition}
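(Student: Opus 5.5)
The plan is to use the defining property of $\PlOrd$ for $\alpha$ directly. Applied to $\beta \in \alpha$ and $\gamma \subseteq \beta$, it says that if $\relpl_\alpha\mleft(\gamma\mright)$ holds then $\gamma \in \alpha$. So it suffices to establish $\relpl_\alpha\mleft(\gamma\mright)$. Once that is done, the conclusion $\gamma \in \alpha$ is just the first conjunct of the consequent in the definition of $\PlOrd$.

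To obtain $\relpl_\alpha\mleft(\gamma\mright)$ we cannot simply invoke \autoref{lem:subset-pl-ord-sat-relpl}, since that lemma requires $\gamma \subseteq \alpha$ and, more importantly, is phrased for the pair $\alpha,\gamma$ where we need the subset relation. We do have it, though. Since $\beta \in \alpha$ and $\alpha$ is transitive, we get $\beta \subseteq \alpha$, hence $\gamma \subseteq \beta \subseteq \alpha$. Both $\alpha$ and $\gamma$ lie in $\PlOrd$, so \autoref{lem:subset-pl-ord-sat-relpl} applies to the pair $(\alpha, \gamma)$ and yields $\relpl_\alpha\mleft(\gamma\mright)$.

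As a sanity check, I would unfold the lemma's argument in this instance. Given $\delta \in \gamma$, $\varepsilon \in \alpha$ and $\varepsilon \subseteq \delta$, \autoref{lem:pl-ord-elem-rel-pl} gives $\relpl_\alpha\mleft(\varepsilon\mright)$, which restricts to $\relpl_\gamma\mleft(\varepsilon\mright)$ because $\gamma \subseteq \alpha$. Plumpness of $\gamma$, applied to $\delta \in \gamma$ and $\varepsilon \subseteq \delta$, then gives $\varepsilon \in \gamma$.

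Assembling the pieces: $\beta \in \alpha$, $\gamma \subseteq \beta$ and $\relpl_\alpha\mleft(\gamma\mright)$ together let the definition of $\alpha \in \PlOrd$ give $\gamma \in \alpha$. As a bonus, the definition also gives that $\gamma \in \delta$ whenever $\beta \in \delta \in \alpha$. The only step with any content is checking that $\gamma \subseteq \alpha$ so that \autoref{lem:subset-pl-ord-sat-relpl} is applicable, and this follows from transitivity of $\alpha$. I do not expect any real obstacle, and everything is intuitionistically valid since no case splits are used.
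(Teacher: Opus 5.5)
Your proof is correct and matches the paper's argument: since $\alpha \in \PlOrd$ it suffices to show $\relpl_\alpha\mleft(\gamma\mright)$, and this follows from \autoref{lem:subset-pl-ord-sat-relpl} once you note $\gamma \subseteq \beta \subseteq \alpha$ by transitivity of $\alpha$. The hedging at the start of your second paragraph is unnecessary, because that lemma applies directly, exactly as you then use it.
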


\begin{proof}
  Since $\alpha \in \PlOrd$, it suffices to show that $\relpl_\alpha\mleft(\gamma\mright)$ holds. By we know that $\gamma \subseteq \beta \subseteq \alpha$ by transitivity, thus \autoref{lem:subset-pl-ord-sat-relpl} suffices for the proof.
\end{proof}

\autoref{prop:pl-ord-class-trans} and \autoref{prop:pl-ord-closed-under-pl-ord-subset} together form one direction of the claim that $\PlOrd$ is indeed the class of plump ordinals we defined in \autoref{sec:intro}. We now show that the same criteria are also sufficient:

\begin{proposition}
  \label{prop:pl-ord-crit}
  For any set $x$, we have $x \in \PlOrd$ if both of the following holds:
  \begin{itemize}
    \item for any $\beta \in x$, $\beta \in \PlOrd$;
    \item for any $\gamma \in \PlOrd$, if $\gamma \subseteq \beta$ for some $\beta \in x$, then $\gamma \in x$.
  \end{itemize}
\end{proposition}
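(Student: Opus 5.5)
The plan is to check the two parts of the definition of $\PlOrd$ directly: first that $x \in \Ord$, then the defining clause. The key idea is that any $\gamma$ appearing in the clause is itself in $\PlOrd$, which lets us apply the second hypothesis on $x$ and \autoref{prop:pl-ord-closed-under-pl-ord-subset}.

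\emph{Step 1: $x$ is an ordinal.} We take $\Ord$ to mean transitive sets of transitive sets. Every $\beta \in x$ lies in $\PlOrd \subseteq \Ord$, so every element of $x$ is transitive. For transitivity of $x$, let $\delta \in \beta \in x$.
\begin{itemize}
  \item By \autoref{prop:pl-ord-class-trans} applied to $\beta \in \PlOrd$, we get $\delta \in \PlOrd$.
  \item Since $\beta$ is transitive, $\delta \subseteq \beta$.
\end{itemize}
The second hypothesis, applied with $\gamma = \delta$, then gives $\delta \in x$.

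\emph{Step 2: the defining clause.} Fix $\beta \in x$ and $\gamma \subseteq \beta$ with $\relpl_x\mleft(\gamma\mright)$. First we show $\relpl_\beta\mleft(\gamma\mright)$. Suppose $\delta \in \gamma$, $\varepsilon \in \beta$ and $\varepsilon \subseteq \delta$. Then $\varepsilon \in x$ by the transitivity of $x$ from Step 1, so $\relpl_x\mleft(\gamma\mright)$ yields $\varepsilon \in \gamma$.

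Now $\beta \in \PlOrd$, $\gamma \subseteq \beta$ and $\relpl_\beta\mleft(\gamma\mright)$ hold. So \autoref{lem:pl-ord-subset-rel-pl-is-pl-ord} gives $\gamma \in \PlOrd$.

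\emph{Step 3: the two conclusions.} Since $\gamma \in \PlOrd$ and $\gamma \subseteq \beta \in x$, the second hypothesis gives $\gamma \in x$. For the remaining conjunct, let $\delta \in x$ with $\beta \in \delta$. By the first hypothesis $\delta \in \PlOrd$, and we already have $\gamma \in \PlOrd$ and $\gamma \subseteq \beta \in \delta$. Then \autoref{prop:pl-ord-closed-under-pl-ord-subset} gives $\gamma \in \delta$. This establishes $x \in \PlOrd$.

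The only mildly delicate point is Step 2. There one must notice that $\relpl_x$ transfers to $\relpl_\beta$, which needs $\beta \subseteq x$ from Step 1. This is what unlocks \autoref{lem:pl-ord-subset-rel-pl-is-pl-ord}. Everything else is direct and uses no excluded middle.
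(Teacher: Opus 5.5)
Your proposal is correct and follows the paper's proof essentially step for step: transitivity of $x$ via \autoref{prop:pl-ord-class-trans}, then restricting $\relpl_x\mleft(\gamma\mright)$ to $\relpl_\beta\mleft(\gamma\mright)$ and applying \autoref{lem:pl-ord-subset-rel-pl-is-pl-ord}. The only variation is in the last conjunct: you combine $\gamma \in \PlOrd$ with \autoref{prop:pl-ord-closed-under-pl-ord-subset}, whereas the paper restricts $\relpl_x\mleft(\gamma\mright)$ to $\relpl_\delta\mleft(\gamma\mright)$ (since $\delta \subseteq x$) and applies the definition of $\delta \in \PlOrd$ directly, and both routes are valid.
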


\begin{proof}
  The first condition already ensures that $x \subseteq \Ord$. For transitivity, note that for any $\beta \in x$, $\gamma \in \beta$, we know that $\gamma \subseteq \beta$ since $\beta \in \Ord$, and $\gamma \in \PlOrd$ by \autoref{prop:pl-ord-class-trans}, thus the second condition implies that $\gamma \in x$ as well.

  Now suppose that $\beta \in x$, $\gamma \subseteq \beta$ and $\relpl_x\mleft(\gamma\mright)$ holds. Since $\beta \subseteq x$ by transitivity, it follows immediately from definition that $\relpl_\beta\mleft(\gamma\mright)$ also holds. Here $\beta \in \PlOrd$, so by \autoref{lem:pl-ord-subset-rel-pl-is-pl-ord}, $\gamma \in \PlOrd$, thus our assumption already ensures that $\gamma \in x$.

  Additionally, for any $\delta \in x$, suppose that $\beta \in \delta$. Observe that we also have $\relpl_\delta\mleft(\gamma\mright)$ and $\delta \in \PlOrd$, so it follows immediately that $\gamma \in \delta$ as well.
\end{proof}

This allows us to finally conclude that $\PlOrd$ is indeed the class of plump ordinals. This verifies that the notion of plump ordinals is $\Pi_1$-definable in $\mathrm{IKP}$, or $\Delta_0\mleft(\mathcal{P}\mright)$-definable in $\mathrm{IKP}\mleft(\mathcal{P}\mright)$ where the powerset operator $\mathcal{P}$ exists and is treated as a primitive symbol.

Additionally observe that one can combine \autoref{lem:pl-ord-subset-rel-pl-is-pl-ord} and \autoref{lem:subset-pl-ord-sat-relpl} to show that for any $\alpha \in \PlOrd$ and any $\beta \subseteq \alpha$,
\[\beta \in \PlOrd \leftrightarrow \relpl_\alpha\mleft(\beta\mright),\]
where the right-hand side of the equivalence is a $\Delta_0$-formula. Therefore, for any $\alpha \in \PlOrd$, we can define its \emph{plump successor} as
\[\alpha^\pls = \left\{\beta \in \mathcal{P}\mleft(\alpha\mright) : \relpl_\alpha\mleft(\beta\mright)\right\} = \mathcal{P}\mleft(\alpha\mright) \cap \PlOrd\]
if this exists as a set; and we know in $\mathrm{IKP}$ that whenever the powerset $\mathcal{P}\mleft(\alpha\mright)$ exists, the plump successor $\alpha^\pls$ also exists by $\Delta_0$-separation. (We use the standard notation $\alpha^+ = \alpha \cup \left\{\alpha\right\}$ to still denote the \emph{thin successor} for any $\alpha \in \Ord$.)

\begin{lemma}
  \label{lem:pl-successor-still-pl}
  For any $\alpha \in \PlOrd$, if $\alpha^\pls$ exists, then $\alpha^\pls \in \PlOrd$.
\end{lemma}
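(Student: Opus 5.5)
We plan to apply the sufficient criterion of \autoref{prop:pl-ord-crit} to $x = \alpha^\pls$, so that no direct unfolding of the $\Pi_1$ definition of $\PlOrd$ is needed. By construction $\alpha^\pls = \mathcal{P}\mleft(\alpha\mright) \cap \PlOrd$, which is available because the preceding discussion combined \autoref{lem:pl-ord-subset-rel-pl-is-pl-ord} and \autoref{lem:subset-pl-ord-sat-relpl}.

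The first condition of \autoref{prop:pl-ord-crit} is immediate. Every $\beta \in \alpha^\pls$ satisfies $\beta \subseteq \alpha$ and $\relpl_\alpha\mleft(\beta\mright)$, so $\beta \in \PlOrd$ by \autoref{lem:pl-ord-subset-rel-pl-is-pl-ord}.

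For the second condition, take $\gamma \in \PlOrd$ with $\gamma \subseteq \beta$ for some $\beta \in \alpha^\pls$. We must show $\gamma \in \alpha^\pls$, that is, $\gamma \subseteq \alpha$ and $\relpl_\alpha\mleft(\gamma\mright)$.
\begin{itemize}
  \item The inclusion follows from $\gamma \subseteq \beta \subseteq \alpha$.
  \item Since $\gamma, \alpha \in \PlOrd$ and $\gamma \subseteq \alpha$, \autoref{lem:subset-pl-ord-sat-relpl} gives $\relpl_\alpha\mleft(\gamma\mright)$ directly.
\end{itemize}
Alternatively, one could argue by hand. Given $\delta \in \gamma$ and $\varepsilon \in \alpha$ with $\varepsilon \subseteq \delta$, we have $\varepsilon \in \PlOrd$ by \autoref{prop:pl-ord-class-trans}, and $\delta \in \gamma \in \PlOrd$. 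Applying \autoref{prop:pl-ord-closed-under-pl-ord-subset} to $\gamma$ then gives $\varepsilon \in \gamma$.

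With both conditions verified, \autoref{prop:pl-ord-crit} yields $\alpha^\pls \in \PlOrd$. Part of that proposition's proof is the transitivity of $x$ and the fact that $x \subseteq \Ord$, so these need no separate argument.

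I expect no serious obstacle. The only point to check is that \autoref{prop:pl-ord-crit} is stated for an arbitrary set $x$ and does not presuppose $x \in \Ord$; its proof derives transitivity from the two conditions, so the application is legitimate. Since $\alpha^\pls$ is assumed to exist as a set, no powerset axiom beyond that hypothesis is required.
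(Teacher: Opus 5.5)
Your proof is correct and follows the paper's route. Both verify the two conditions of \autoref{prop:pl-ord-crit} for $x = \alpha^\pls$. The paper reads both memberships directly off $\alpha^\pls = \mathcal{P}\mleft(\alpha\mright) \cap \PlOrd$, whereas you spell them out through \autoref{lem:pl-ord-subset-rel-pl-is-pl-ord} and \autoref{lem:subset-pl-ord-sat-relpl}.
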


\begin{proof}
  We just need to check the two criteria in \autoref{prop:pl-ord-crit}: for any $\beta \in \alpha^\pls$, we have $\beta \in \PlOrd$ by definition; for any $\gamma \in \PlOrd$, if $\gamma \subseteq \beta$ for some $\beta \in \alpha^\pls$, then $\gamma \subseteq \alpha$ and $\gamma \in \alpha^\pls$ again by definition.
\end{proof}

\begin{lemma}
  \label{lem:pl-union-still-pl}
  For any set $s \subseteq \PlOrd$, $\bigcup s \in \PlOrd$.
\end{lemma}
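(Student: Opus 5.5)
We will verify the two criteria of \autoref{prop:pl-ord-crit} for $x = \bigcup s$, which exists by the union axiom of $\mathrm{IKP}$. This avoids handling $\relpl$ directly, since those criteria only mention membership in $\PlOrd$ and closure under plump subsets.

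For the first criterion, take any $\beta \in \bigcup s$. Then $\beta \in \alpha$ for some $\alpha \in s$. Since $s \subseteq \PlOrd$, we have $\alpha \in \PlOrd$, and \autoref{prop:pl-ord-class-trans} gives $\beta \in \PlOrd$.

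For the second criterion, take any $\gamma \in \PlOrd$ with $\gamma \subseteq \beta$ for some $\beta \in \bigcup s$. Again pick $\alpha \in s$ with $\beta \in \alpha$. Then $\alpha, \gamma \in \PlOrd$ and $\gamma \subseteq \beta \in \alpha$, so \autoref{prop:pl-ord-closed-under-pl-ord-subset} gives $\gamma \in \alpha$. Hence $\gamma \in \bigcup s$.

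Both criteria hold, so \autoref{prop:pl-ord-crit} yields $\bigcup s \in \PlOrd$. There is no real obstacle here. The only point to check is intuitionistic acceptability: each step merely instantiates an existential witness $\alpha \in s$ obtained from membership in $\bigcup s$, with no case split or excluded middle. The transitivity and ordinal requirements on $\bigcup s$ are already handled inside the proof of \autoref{prop:pl-ord-crit}, so they need no separate argument.
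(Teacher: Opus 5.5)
Your proof is correct and matches the paper's argument step for step. You check the two criteria of \autoref{prop:pl-ord-crit} for $\bigcup s$, using \autoref{prop:pl-ord-class-trans} for the first criterion and \autoref{prop:pl-ord-closed-under-pl-ord-subset} for the second.
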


\begin{proof}
  We check the two criteria in \autoref{prop:pl-ord-crit}: for any $\beta \in \bigcup s$, we have $\beta \in \alpha$ for some $\alpha \in s \subseteq \PlOrd$. Then $\beta \in \PlOrd$ by \autoref{prop:pl-ord-class-trans}. For any $\gamma \in \PlOrd$, if $\gamma \subseteq \beta$ for some $\beta \in \bigcup s$, then again we have $\beta \in \alpha$ for some $\alpha \in s \subseteq \PlOrd$, and it follows that $\gamma \in \alpha \subseteq \bigcup s$ by \autoref{prop:pl-ord-closed-under-pl-ord-subset}.
\end{proof}

\section{Constructible universe and inner model}

The constructible universe does not synergise well with the usual notion of ordinals in intuitionistic contexts, as the old problem of whether one can prove $\Ord \subseteq L$ in intuitionistic set theory still remains open. Thus, in some sense one could say that the intuitionistic $L$ may fail to be an ``inner model'' of the universe $V$. In this section, we shall show instead that the \emph{plump constructible universe} $L_\pl = \bigcup_{\alpha \in \PlOrd} L_\alpha$ becomes a model of $\mathrm{IKP}$ and already contains all plump ordinals.

\subsection{Axiom of unboundedness}

Before we proceed, one must notice that $\mathrm{IKP}$ does not guarantee the existence of powersets, thus not the existence of plump successors $\alpha^\pls = \mathcal{P}\mleft(\alpha\mright) \cap \PlOrd$ either. Therefore one may wish to move to a stronger base theory when working with plump ordinals. We will examine several additional axioms to commit to here and later in \autoref{subsec:bounding-axioms}.

The first natural assumption to make when studying these ordinals is the axiom $\mathrm{PlUb}$ that the class $\PlOrd$ is unbounded, i.e.
\[\forall \alpha \in \PlOrd \ \exists \beta \in \PlOrd \ \alpha \in \beta.\]

\begin{lemma}
  The following are equivalent:
  \begin{itemize}
    \item $\mathrm{PlUb}$;
    \item For any $\alpha \in \PlOrd$, $\alpha^\pls$ exists as a set.
  \end{itemize}
\end{lemma}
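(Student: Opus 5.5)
The direction from the second condition to $\mathrm{PlUb}$ is immediate. Given $\alpha \in \PlOrd$, suppose $\alpha^\pls$ exists. Then \autoref{lem:pl-successor-still-pl} gives $\alpha^\pls \in \PlOrd$. Moreover $\alpha \subseteq \alpha$ and $\alpha \in \PlOrd$, so $\alpha \in \mathcal{P}\mleft(\alpha\mright) \cap \PlOrd = \alpha^\pls$. Hence $\beta = \alpha^\pls$ witnesses $\mathrm{PlUb}$ for $\alpha$.

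For the converse, assume $\mathrm{PlUb}$ and fix $\alpha \in \PlOrd$. Choose $\beta \in \PlOrd$ with $\alpha \in \beta$. The plan is to obtain $\alpha^\pls$ by $\Delta_0$-separation from $\beta$:
\[\alpha^\pls = \left\{\gamma \in \beta : \gamma \subseteq \alpha \land \relpl_\alpha\mleft(\gamma\mright)\right\}.\]
Since the defining formula is $\Delta_0$ with parameter $\alpha$, this set exists in $\mathrm{IKP}$. It remains to check that it equals $\mathcal{P}\mleft(\alpha\mright) \cap \PlOrd$, in the sense that its elements are exactly the subsets of $\alpha$ that are plump. Then it is the plump successor.

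Inclusion of the separated set into $\mathcal{P}\mleft(\alpha\mright) \cap \PlOrd$: if $\gamma \in \beta$, $\gamma \subseteq \alpha$ and $\relpl_\alpha\mleft(\gamma\mright)$ hold, then \autoref{lem:pl-ord-subset-rel-pl-is-pl-ord} gives $\gamma \in \PlOrd$.

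Reverse inclusion: let $\gamma \subseteq \alpha$ with $\gamma \in \PlOrd$. Then $\relpl_\alpha\mleft(\gamma\mright)$ holds by \autoref{lem:subset-pl-ord-sat-relpl}. Also $\gamma \subseteq \alpha$ with $\alpha \in \beta$ and $\beta, \gamma \in \PlOrd$, so \autoref{prop:pl-ord-closed-under-pl-ord-subset} gives $\gamma \in \beta$. Thus $\gamma$ belongs to the separated set.

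This shows every plump subset of $\alpha$ lies in a single set $\beta$. The key observation, and the only nontrivial step, is using \autoref{prop:pl-ord-closed-under-pl-ord-subset} to bound all plump subsets of $\alpha$ inside $\beta$. No powerset axiom is needed, because the plump subsets are collected from the already existing set $\beta$ rather than from $\mathcal{P}\mleft(\alpha\mright)$. The remaining work is routine bookkeeping that the separation formula is genuinely $\Delta_0$, which holds by the definition of $\relpl_\alpha$.
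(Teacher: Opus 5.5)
Your proof is correct and matches the paper's: the forward direction separates $\alpha^\pls$ out of the witness $\beta$ using \autoref{prop:pl-ord-closed-under-pl-ord-subset}, and the backward direction uses $\alpha \in \alpha^\pls$. You also make explicit two points the paper leaves tacit, namely the verification that the separated set is exactly $\mathcal{P}\mleft(\alpha\mright) \cap \PlOrd$ and the appeal to \autoref{lem:pl-successor-still-pl} to know that $\alpha^\pls \in \PlOrd$.
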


\begin{proof}
  Suppose that $\alpha, \beta \in \PlOrd$ and $\alpha \in \beta$, then for any $\gamma \subseteq \alpha$, by \autoref{prop:pl-ord-closed-under-pl-ord-subset} $\gamma \in \PlOrd$ implies $\gamma \in \beta$. Therefore,
  \[\alpha^\pls = \left\{\gamma \in \beta : \gamma \subseteq \alpha \land \relpl_\alpha\mleft(\gamma\mright)\right\}\]
  is a set by $\Delta_0$-separation.

  The backward direction is trivial as $\alpha \in \alpha^\pls$ by definition.
\end{proof}

We immediately see that $\mathrm{PlUb}$ is independent of $\mathrm{IKP}$:
\begin{proposition}
  \label{prop:ikp-maybe-no-1-pls}
  $\mathrm{IKP}$ alone does not prove $\mathrm{PlUb}$. Specifically, we have $1 \in \PlOrd$ yet $\mathrm{IKP} \nvdash \text{$1^\pls$ exists}$.
\end{proposition}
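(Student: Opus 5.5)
First I will check that $1 \in \PlOrd$ using \autoref{prop:pl-ord-crit}. The only element of $1$ is $\varnothing$, and $\varnothing \in \PlOrd$ holds vacuously. This is either by the same criterion, or directly from the definition, since $\varnothing$ has no elements. For the second criterion, suppose $\gamma \in \PlOrd$ and $\gamma \subseteq \beta$ for some $\beta \in 1$. Then $\beta = \varnothing$, so $\gamma = \varnothing \in 1$.

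Next I will identify $1^\pls$. It is $\mathcal{P}(1) \cap \PlOrd = \{\beta \subseteq 1 : \relpl_1(\beta)\}$. Unwinding $\relpl_1(\beta)$ gives: for all $\delta \in \beta$ and all $\varepsilon \in 1$ with $\varepsilon \subseteq \delta$, we have $\varepsilon \in \beta$. The only possible $\delta$ and $\varepsilon$ are $\varnothing$, so the condition becomes $\varnothing \in \beta \rightarrow \varnothing \in \beta$, which is trivially true. Hence every subset of $1$ is plump relative to $1$, and $1^\pls = \mathcal{P}(1) = \Omega$. So the claim reduces to showing that $\mathrm{IKP}$ does not prove that the set of truth values $\Omega = \mathcal{P}(1)$ exists.

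The main step, and the main obstacle, is exhibiting a model of $\mathrm{IKP}$ in which $\mathcal{P}(1)$ is not a set. I would use a realizability or Kripke/sheaf-style model that is \emph{set-sized at each truth value}, chosen so that the subsets of $1$ form a proper class. My first attempt is a Kripke model over a proper-class-sized (or sufficiently large) poset of stages. Because separation is restricted to $\Delta_0$ formulas, $\mathrm{IKP}$ only generates subsets of $1$ of the form $\{\varnothing : \varphi\}$ with $\varphi$ a $\Delta_0$ formula with parameters. I expect a cleaner route is to cite the known result that $\mathrm{IKP}$ (even $\mathrm{CZF}$) does not prove the existence of $\mathcal{P}(1)$. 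One such citation is Rathjen's result that $\mathrm{CZF}$ plus the exponentiation axiom has the proof-theoretic strength of $\mathrm{KP}$, whereas $\mathrm{CZF} + \mathcal{P}(1)$ exists is as strong as $\mathrm{CZF}$ with full separation. Another is the standard observation that $\mathrm{IKP} + \text{``}\Omega\text{ is a set''}$ proves full separation for $\Delta_0$-over-$\Omega$ statements and hence interprets much stronger systems.

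Concretely, I would argue via proof-theoretic strength. From $\Omega$ being a set, together with $\Delta_0$-separation and collection, one obtains exponentiation-like and separation principles sufficient to interpret a system strictly stronger than $\mathrm{IKP}$. For example, this is known to give the strength of second-order arithmetic or more, as in Gambino's and Rathjen's work on power set and $\Omega$. Since $\mathrm{IKP}$ is consistent relative to $\mathrm{KP}$, which is much weaker, Gödel's second incompleteness theorem then rules out $\mathrm{IKP} \vdash 1^\pls$ exists. If one prefers a model-theoretic witness instead, $L_{\omega_1^{CK}}$ is unsuitable, since in a classical model $\mathcal{P}(1) = 2$ exists. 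The separating model must therefore be genuinely intuitionistic, which is why I would rely on the cited strength results rather than build a model by hand.
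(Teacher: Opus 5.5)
Your verification that $1 \in \PlOrd$, and that every $\beta \subseteq 1$ satisfies $\relpl_1(\beta)$ so that $1^\pls = \mathcal{P}(1)$, is correct. It is exactly the paper's reduction; the paper then simply calls the non-existence of $\mathcal{P}(1)$ in $\mathrm{IKP}$ conventional.

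The gap is in the independence step you commit to. Your claim that $\mathrm{IKP}$ plus ``$\Omega$ is a set'' interprets a system strictly stronger than $\mathrm{IKP}$ is false, and so is the ``standard observation'' you quote. Classical $\mathrm{KP}\omega$ proves every axiom of $\mathrm{IKP}$ and also proves that $\mathcal{P}(1) = 2$ is a set, as you note yourself at the end. Hence $\mathrm{IKP} + \text{``$\mathcal{P}(1)$ exists''}$ is a subtheory of $\mathrm{KP}\omega$ and is equiconsistent with $\mathrm{IKP}$. Also, $\mathrm{KP}$ is not ``much weaker'' than $\mathrm{IKP}$; the two have the same strength. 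So Gödel's second incompleteness theorem cannot separate these theories. The underlying reason is that in $\mathrm{IKP}$, having $\Omega$ as a set gives no exponentiation, so you never get from truth values to something like $\mathcal{P}(\omega)$. Compare the paper's lemma that $\mathrm{PlUb}$ + Exponentiation is equivalent to the powerset axiom: there exponentiation is a separate ingredient.

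To repair the argument you need a theory that supplies exponentiation. Work in $\mathrm{CZF}$, which contains $\mathrm{IKP}$ and proves exponentiation from subset collection. There, by the argument of the paper's lemma, the existence of $\mathcal{P}(1)$ already yields full powerset. $\mathrm{CZF}$ + powerset is far stronger than $\mathrm{CZF}$, which has only the strength of $\mathrm{KP}$. For instance, it makes $\mathcal{P}(\omega)$ a set and hence interprets full intuitionistic second-order arithmetic. So, assuming $\mathrm{CZF}$ is consistent, $\mathrm{CZF} \nvdash \text{``$\mathcal{P}(1)$ exists''}$, and a fortiori $\mathrm{IKP}$ does not prove it. Your first citation points in this direction, but the relevant fact is that over $\mathrm{CZF}$, $\mathcal{P}(1)$ gives full powerset; it is not a comparison with full separation. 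The alternative is to actually build a genuinely intuitionistic model of $\mathrm{IKP}$ in which $\Omega$ is a proper class. Your Kripke-model sketch does not carry this out.
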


\begin{proof}
  We first observe that any $\alpha \subseteq 1$ is a plump ordinal: it is trivial that $\alpha \in \Ord$; for any $\beta \in \alpha$, we have $\beta = \varnothing$, thus for any $\gamma \subseteq \beta$, clearly $\gamma = \beta$ and thus $\gamma \in \alpha$ as well. Therefore, $\mathcal{P}\mleft(1\mright) \subseteq \PlOrd$. It follows that $1^\pls$, if exists, is precisely $\mathcal{P}\mleft(1\mright) \cap \PlOrd = \mathcal{P}\mleft(1\mright)$.

  It is conventional to verify that $\mathrm{IKP} \nvdash \text{$\mathcal{P}\mleft(1\mright)$ exists (as a set)}$.
\end{proof}

Now assuming $\mathrm{PlUb}$, we can define the \emph{plump operator} $\left(-\right)^\pl : \Ord \rightarrow \PlOrd$ as the following:
\[\alpha^\pl = \bigcup_{\beta \in \alpha} \left(\beta^\pl\right)^\pls.\]
It follows from \autoref{lem:pl-successor-still-pl} and \autoref{lem:pl-union-still-pl} that this is well-defined and $\alpha^\pl \in \PlOrd$ for every $\alpha \in \Ord$. However, we note that this map is very likely not injective.

\begin{proposition}
  The following are equivalent:
  \begin{itemize}
    \item $\mathrm{PlUb} + \text{$\left(-\right)^\pl : \Ord \rightarrow \PlOrd$ is injective}$;
    \item The axiom of restricted excluded middle, i.e.\ $\forall x \ \forall y \left(x \in y \lor \neg x \in y\right)$.
  \end{itemize}
\end{proposition}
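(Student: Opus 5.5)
We prove the two directions separately. The forward direction uses a concrete pair of ordinals built from a truth value. The backward direction shows that under restricted excluded middle every ordinal is plump and $\left(-\right)^\pl$ is the identity.

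\textbf{From $\mathrm{PlUb}$ and injectivity to restricted excluded middle.} Fix $x, y$ and form the truth value $p = \left\{\varnothing : x \in y\right\} \subseteq 1$ by $\Delta_0$-separation. First we compute some small plump successors.
\begin{itemize}
  \item $0^\pls = \mathcal{P}\mleft(0\mright) \cap \PlOrd = \left\{0\right\} = 1$, so $1^\pl = 0^\pls = 1$.
  \item By the proof of \autoref{prop:ikp-maybe-no-1-pls}, every subset of $1$ is plump, so $1^\pls = \mathcal{P}\mleft(1\mright)$. Likewise $p^\pls = \mathcal{P}\mleft(p\mright)$.
  \item $p^\pl = \bigcup_{\beta \in p} 0^\pls = p$.
\end{itemize}
Now consider the two sets $2 = \left\{0, 1\right\}$ and $b = \left\{0, p, 1\right\}$. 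The set $b$ is an ordinal: it is transitive because $p \subseteq 1 \subseteq b$, and its elements are ordinals. Unfolding the definition of $\left(-\right)^\pl$ gives
\[2^\pl = 1 \cup \mathcal{P}\mleft(1\mright) = \mathcal{P}\mleft(1\mright), \qquad b^\pl = 1 \cup \mathcal{P}\mleft(p\mright) \cup \mathcal{P}\mleft(1\mright) = \mathcal{P}\mleft(1\mright),\]
using $\mathcal{P}\mleft(p\mright) \subseteq \mathcal{P}\mleft(1\mright)$. Injectivity then forces $b = 2$, so $p \in 2$, i.e.\ $p = 0$ or $p = 1$. If $p = 1$ then $x \in y$. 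If $p = 0$ then $\neg x \in y$, since $x \in y$ would give $\varnothing \in p$.

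\textbf{From restricted excluded middle to $\mathrm{PlUb}$ and injectivity.} Restricted excluded middle gives excluded middle for all $\Delta_0$ formulas, by induction on formula complexity using bounded quantifiers over sets. We then prove trichotomy of $\in$ on $\Ord$ by the usual double set induction, as in classical $\mathrm{KP}$; the case distinctions are on $\Delta_0$ statements, so they are now legitimate.

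From trichotomy we get three consequences.
\begin{itemize}
  \item Every ordinal satisfies the defining formula of $\PlOrd$. If $\gamma \subseteq \beta \in \alpha$ with $\gamma$ an ordinal, trichotomy gives $\gamma \in \beta$ or $\gamma = \beta$. Either way $\gamma \in \alpha$, and $\gamma \in \delta$ whenever $\beta \in \delta$. Here $\relpl_\alpha\mleft(\gamma\mright)$ together with $\gamma \subseteq \beta$ already makes $\gamma$ transitive, hence an ordinal.
  \item $\Ord = \PlOrd$, so the thin successor $\alpha^+$ witnesses $\mathrm{PlUb}$.
  \item $\alpha^\pls$, the set of ordinal subsets of $\alpha$, equals $\alpha^+$, because by trichotomy any ordinal $\gamma \subseteq \alpha$ satisfies $\gamma \in \alpha$ or $\gamma = \alpha$.
\end{itemize}
A set induction then gives $\alpha^\pl = \bigcup_{\beta \in \alpha} \beta^+ = \alpha$. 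Hence $\left(-\right)^\pl$ is the identity and in particular injective.

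The main obstacle is the backward direction's trichotomy argument. It must be carried out carefully in $\mathrm{IKP}$ with only $\Delta_0$ excluded middle, and one must check that every case split used is on a $\Delta_0$ formula. The forward direction reduces to the explicit computation above, once we recall that every subset of $1$ is plump.
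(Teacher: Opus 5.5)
Your proof is correct and follows the paper's argument. The backward direction is the same trichotomy argument (you additionally check, via $\relpl_\alpha(\gamma)$, that $\gamma$ is an ordinal before applying trichotomy, which the paper leaves implicit), and your forward-direction identity $2^\pl = \{0, p, 1\}^\pl$ is a pointwise version of the paper's $2^\pl = \mathcal{P}(1)^\pl$, letting you read off $p \in 2$ directly without the paper's detour through $\mathcal{P}(1) = 2$ and the observation $\sigma_{\varnothing, \alpha} = \alpha$.
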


\begin{proof}
  We start with the easier, backward direction. Assuming restricted excluded middle, i.e.\ excluded middle for any $\Delta_0$-formulae as well, we know that the classical proof for the trichotomy property of ordinals goes through. It follows that for any ordinals $\alpha, \beta \in \Ord$ such that $\alpha \subseteq \beta$, we must have $\alpha \in \beta \lor \alpha = \beta$. Thus every ordinal is plump, $\alpha^\pls = \alpha^+$ always exists and $\left(-\right)^\pl$ is simply the identity map.

  For the forward direction, as in the proof of \autoref{prop:ikp-maybe-no-1-pls}, we know that $\mathrm{PlUb}$ implies that $1^\pls = \mathcal{P}\mleft(1\mright)$ exists. For any sets $x, y$, by considering the truth value
  \[\sigma_{x, y} = \left\{\varnothing : x \in y\right\} \in \mathcal{P}\mleft(1\mright),\]
  we know that restricted excluded middle holds if and only if $\forall x \ \forall y \ \sigma_{x, y} \in 2$, if and only if\footnote{The backward direction here depends on the observation that for any $\alpha \in \mathcal{P}\mleft(1\mright)$, $\sigma_{\varnothing, \alpha} = \alpha$.} $\mathcal{P}\mleft(1\mright) = 2$. It suffices to show that $\left(-\right)^\pl$ is injective implies $\mathcal{P}\mleft(1\mright) = 2$.

  We look at $2^\pl$ and $\mathcal{P}\mleft(1\mright)^\pl$. Since $2 \subseteq \mathcal{P}\mleft(1\mright)$, by definition we have $2^\pl \subseteq \mathcal{P}\mleft(1\mright)^\pl$. On the other hand, for any $\alpha \in \mathcal{P}\mleft(1\mright)^\pl$, there exists $\beta \subseteq 1$ such that $\alpha \in \left(\beta^\pl\right)^\pls$, i.e.\ $\alpha \subseteq \beta^\pl \subseteq 1^\pl$. It follows that $\alpha \in \left(1^\pl\right)^\pls \subseteq 2^\pl$. Thus, we have $2^\pl = \mathcal{P}\mleft(1\mright)^\pl$, and the injectivity of $\left(-\right)^\pl$ indeed implies $\mathcal{P}\mleft(1\mright) = 2$ as desired.
\end{proof}

\begin{lemma}
  The following are equivalent:
  \begin{itemize}
    \item $\mathrm{PlUb} + \text{Exponentation}$;
    \item Every set has a powerset.
  \end{itemize}
\end{lemma}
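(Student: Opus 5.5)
The backward direction is easy. If every set has a powerset, then for any $\alpha \in \PlOrd$ the set $\mathcal{P}\mleft(\alpha\mright)$ exists. Hence $\alpha^\pls$ exists by $\Delta_0$-separation, and the previous lemma gives $\mathrm{PlUb}$. Exponentiation also follows from powersets in $\mathrm{IKP}$: for sets $x, y$, the set ${}^x y$ of functions is carved out of $\mathcal{P}\mleft(x \times y\right)$ by $\Delta_0$-separation, with $x \times y$ available in $\mathrm{IKP}$.

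For the forward direction, I would first extract the truth-value set. As in the proof of \autoref{prop:ikp-maybe-no-1-pls}, $1 \in \PlOrd$ and every subset of $1$ is plump, so $\mathrm{PlUb}$ gives that $1^\pls = \mathcal{P}\mleft(1\mright) = \Omega$ exists as a set. Now fix an arbitrary set $x$. By Exponentiation the function set ${}^x\Omega$ exists. I would then set up the usual correspondence between subsets and characteristic maps. To $y \subseteq x$ assign
\[\chi_y : x \to \Omega, \qquad \chi_y\mleft(z\mright) = \left\{\varnothing : z \in y\right\},\]
which is a $\Delta_0$-definable set by $\Delta_0$-separation on $1$. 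Conversely, to $f \in {}^x\Omega$ assign $y_f = \left\{z \in x : \varnothing \in f\mleft(z\mright)\right\}$. The key check is $y_{\chi_y} = y$, which holds because $\varnothing \in \chi_y\mleft(z\mright) \leftrightarrow z \in y$ for $z \in x$. This identity needs no excluded middle, since elements of $\Omega$ are determined by whether $\varnothing$ belongs to them.

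To produce $\mathcal{P}\mleft(x\mright)$, I would apply $\Delta_0$-replacement, which is available in $\mathrm{IKP}$ via collection together with $\Delta_0$-separation. The map $f \mapsto y_f$ on the set ${}^x\Omega$ is given by a $\Delta_0$ formula, and each $y_f$ exists by $\Delta_0$-separation. So $\left\{y_f : f \in {}^x\Omega\right\}$ is a set. Every subset $y \subseteq x$ appears in it as $y_{\chi_y}$, and every $y_f$ is a subset of $x$, so this set is exactly $\mathcal{P}\mleft(x\mright)$.

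The only real care point is whether $\Delta_0$-replacement, with the function set as domain, is legitimately available in $\mathrm{IKP}$. I expect it is: strong collection yields a set containing the images, and $\Delta_0$-separation then trims it to exactly the images, using the $\Delta_0$ relation $w = y_f$. Beyond that, the argument only needs the observation from \autoref{prop:ikp-maybe-no-1-pls} that $\mathrm{PlUb}$ already forces $\Omega$ to be a set.
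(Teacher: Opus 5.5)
Your proposal is correct and matches the paper's proof step for step. $\mathrm{PlUb}$ yields that $1^\pls = \mathcal{P}\mleft(1\mright)$ is a set, and $\mathcal{P}\mleft(x\mright)$ is then obtained by $\Delta_0$-replacement as $\left\{\left\{z \in x : \varnothing \in f\mleft(z\mright)\right\} : f \in {}^x\Omega\right\}$; conversely, $\alpha^\pls$ is separated out of $\mathcal{P}\mleft(\alpha\mright)$. Your explicit checks are sound details the paper leaves implicit: that every $y \subseteq x$ arises as $y_{\chi_y}$, and that exponentiation follows from powersets by separating ${}^x y$ out of $\mathcal{P}\mleft(x \times y\mright)$.
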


\begin{proof}
  For the forward direction, we again only need from $\mathrm{PlUb}$ the impliation that $1^\pls = \mathcal{P}\mleft(1\mright)$ exists. For any set $x$, suppose that the exponentiation $x^{\mathcal{P}\mleft(1\mright)}$ exists, then we have
  \[\mathcal{P}\mleft(x\mright) = \left\{\left\{y \in x : \varnothing \in f\mleft(y\mright)\right\} : f \in x^{\mathcal{P}\mleft(1\mright)}\right\}\]
  by $\Delta_0$-replacement.

  The backward direction is trivial, since for any $\alpha \in \PlOrd$, if $\mathcal{P}\mleft(\alpha\mright)$ is a set, then so does
  \[\alpha^\pls = \left\{\beta \in \mathcal{P}\mleft(\alpha\mright) : \relpl_\alpha\mleft(\beta\mright)\right\}\]
  by $\Delta_0$-separation.
\end{proof}

\subsection{The plump constructible universe}

We follow the treatment of the intuitionistic constructible universe in Lubarsky \cite{lubarsky93-intuitionistic-l}. Let the constructible universe $L = \bigcup_{\alpha \in \Ord} L_\alpha$ be given by a $\Sigma_1$-formula recursively through
\[L_\alpha = \bigcup_{\beta \in \alpha} \dd\mleft(L_\beta\mright),\]
where $\dd\mleft(X\mright)$ denotes the set of first-order definable subsets of the structure $\tuple{X; \in}$. Each $L_\alpha$ will be a transitive set.

We note the proof that for any $\alpha \in \Ord$, $L_\alpha \cap \Ord = \alpha$ requires the trichotomy property of ordinals, thus classical logic. As pointed out in \cite{lubarsky93-intuitionistic-l}, it remains open whether usual intuitionistic set theories can prove $\Ord \subseteq L$. However, we see that the behaviours of plump ordinals are much simpler to characterise:

\begin{lemma}
  \label{lem:uniform-def-pl-ord-in-l}
  For any $\alpha \in \PlOrd$, we have
  \[\alpha = \left\{x \in L_\alpha : L_\alpha \vDash x \in \PlOrd\right\} \in \dd\mleft(L_\alpha\mright).\]
\end{lemma}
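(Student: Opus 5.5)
We argue by set induction on $\alpha \in \PlOrd$; this is legitimate because $\PlOrd$ is closed under elements by \autoref{prop:pl-ord-class-trans}. The claim splits into three parts:
\begin{enumerate}[label=(\roman*)]
  \item $\alpha \subseteq L_\alpha$;
  \item $L_\alpha \vDash \beta \in \PlOrd$ for every $\beta \in \alpha$;
  \item every $x \in L_\alpha$ with $L_\alpha \vDash x \in \PlOrd$ already lies in $\alpha$.
\end{enumerate}
Membership in $\dd\mleft(L_\alpha\mright)$ is then immediate, since $\PlOrd$ is given by the explicit formula of the previous section, interpreted in $\tuple{L_\alpha; \in}$.

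The basic tool is absoluteness. Because $L_\alpha$ is transitive, $\Delta_0$-formulae are absolute between $L_\alpha$ and $V$. Being an ordinal is $\Delta_0$, and so is $\relpl_\alpha$. The only unbounded quantifier in the definition of $\PlOrd$ is $\forall \gamma \subseteq \beta$. So for $\beta \in L_\alpha$, the statement $L_\alpha \vDash \beta \in \PlOrd$ says exactly that the defining condition holds for all $\gamma \in L_\alpha$ with $\gamma \subseteq \beta$. In particular, $\beta \in \PlOrd$ (in $V$) implies $L_\alpha \vDash \beta \in \PlOrd$, since the quantifier ranges downward over a subclass.

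For (i) and (ii), take $\beta \in \alpha$. By \autoref{prop:pl-ord-class-trans} we have $\beta \in \PlOrd$, so the inductive hypothesis gives $\beta \in \dd\mleft(L_\beta\mright) \subseteq L_\alpha$. Downward absoluteness then yields $L_\alpha \vDash \beta \in \PlOrd$.

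For (iii), take $x \in L_\alpha$ with $L_\alpha \vDash x \in \PlOrd$. Since $x \in L_\alpha$, we have $x \in \dd\mleft(L_\beta\mright)$ for some $\beta \in \alpha$, so $x \subseteq L_\beta$. The plan is to show that $x \in \PlOrd$ and $x \subseteq \beta$, and then apply \autoref{prop:pl-ord-closed-under-pl-ord-subset} to $\alpha$, $x$ and $\beta$ to conclude $x \in \alpha$.

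This is the main obstacle. The two facts have to be proved jointly, by an inner set induction on $x$. The inner hypothesis is that for every $y \in L_\alpha$ with $L_\alpha \vDash y \in \PlOrd$, and every $\beta' \in \alpha$ with $y \subseteq L_{\beta'}$, we have $y \in \PlOrd$ and $y \subseteq \beta'$. The argument then runs in three steps.
\begin{itemize}
  \item \emph{Elements of $x$.} Let $y \in x$. Apply \autoref{prop:pl-ord-class-trans} inside $L_\alpha$; this is valid because $\mathrm{IKP}$ is not needed, as that argument is purely first-order from the definition. It gives $L_\alpha \vDash y \in \PlOrd$.
  \item \emph{Each $y$ lands below $\beta$.} Since $y \in L_\beta$, we have $y \in \dd\mleft(L_{\beta'}\mright)$ for some $\beta' \in \beta$. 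The inner hypothesis gives $y \in \PlOrd$ and $y \subseteq \beta'$. Applying \autoref{prop:pl-ord-closed-under-pl-ord-subset} to $\beta$ gives $y \in \beta$. Hence $x \subseteq \beta$ and $x \subseteq \PlOrd$.
  \item \emph{$x$ itself is plump.} Use the criterion \autoref{prop:pl-ord-crit}. Suppose $\gamma \in \PlOrd$ and $\gamma \subseteq \delta \in x$. Then $\gamma \subseteq \delta \subseteq L_\beta$, and the outer hypothesis on $\beta$ gives $\gamma \in L_\beta \subseteq L_\alpha$. Now the relativised plumpness of $x$ in $L_\alpha$ applies to $\gamma$: by \autoref{lem:subset-pl-ord-sat-relpl} we have $\relpl_x\mleft(\gamma\mright)$, and this yields $\gamma \in x$.
\end{itemize}

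Some care is needed in the last step, because the relativised lemmas must be re-derived inside $L_\alpha$. The earlier proofs use only the definition and $\Delta_0$ reasoning, so they relativise without change.
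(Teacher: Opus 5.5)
Your argument is correct, and parts (i) and (ii) coincide with the paper's. The difference is in (iii), where your inner set induction on $x$ does work that the outer hypothesis already provides.

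In the paper, take $y \in x$. Then $y \in L_\beta$, and by the relativised \autoref{prop:pl-ord-class-trans} also $L_\alpha \vDash y \in \PlOrd$. This is a $\Pi_1$ statement and $L_\beta \subseteq L_\alpha$ is transitive, so downward absoluteness gives $L_\beta \vDash y \in \PlOrd$. The outer hypothesis $\beta = \left\{y \in L_\beta : L_\beta \vDash y \in \PlOrd\right\}$ then yields $y \in \beta$ immediately. With $x \subseteq \beta$ in hand, the paper relativises \autoref{lem:subset-pl-ord-sat-relpl} to $L_\alpha$ to get $L_\alpha \vDash \relpl_\beta\mleft(x\mright)$. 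This is $\Delta_0$, hence true in $V$, so $x \in \PlOrd$ by \autoref{lem:pl-ord-subset-rel-pl-is-pl-ord}, and then $x \in \alpha$.

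You instead recover $y \in \beta$ from the genuine plumpness of $y$ (your inner hypothesis) together with \autoref{prop:pl-ord-closed-under-pl-ord-subset}. You then certify $x \in \PlOrd$ through \autoref{prop:pl-ord-crit}. This is valid. It has the mild feature that (iii) needs from the outer hypothesis only inclusions such as $\beta \subseteq L_\beta$, not its exact characterisation of $\beta$. The price is an extra induction and a longer verification.

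Three details to tighten in your third bullet:
\begin{enumerate}
  \item $\gamma \in L_\beta$ does not follow from $\gamma \subseteq \delta \subseteq L_\beta$. It follows because $\delta \in x \subseteq \beta$ gives $\gamma \in \beta$ by \autoref{prop:pl-ord-closed-under-pl-ord-subset}, and $\beta \subseteq L_\beta$.
  \item You cannot apply \autoref{lem:subset-pl-ord-sat-relpl} in $V$ to obtain $\relpl_x\mleft(\gamma\mright)$, since it presupposes $x \in \PlOrd$. Either use its relativisation to $L_\alpha$, with $\gamma \subseteq x$ by transitivity of $x$ and $L_\alpha \vDash \gamma \in \PlOrd$ by downward absoluteness. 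Or check it directly: every $\varepsilon \in x$ is plump by your second bullet, so $\varepsilon \subseteq \delta' \in \gamma$ gives $\varepsilon \in \gamma$ by \autoref{prop:pl-ord-closed-under-pl-ord-subset}.
  \item The relativised lemmas are not purely $\Delta_0$ reasoning, since \autoref{lem:pl-ord-elem-rel-pl} uses set induction. Set induction does relativise to transitive sets, so nothing breaks.
\end{enumerate}
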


\begin{proof}
  We shall prove this by set induction on $\alpha$. Fix some $\alpha \in \PlOrd$, then the inductive hypothesis says that for any $\beta \in \alpha$,
  \[\beta = \left\{x \in L_\beta : L_\beta \vDash x \in \PlOrd\right\} \in \dd\mleft(L_\beta\mright),\]
  so immediately $\alpha \subseteq L_\alpha$. Notice additionally that $\PlOrd$ is a $\Pi_1$-class, which is downward absolute on transitive sets/classes, thus for any $\beta \in \alpha$, since $\beta \in \PlOrd$, we must also have $L_\alpha \vDash \beta \in \PlOrd$.

  It remains to prove the converse: for any $x \in L_\alpha$ such that $L_\alpha \vDash x \in \PlOrd$, we want to show that $x \in \alpha$. There must exist some $\beta \in \alpha$ such that $x \in \dd\mleft(L_\beta\mright)$. Consider any $y \in x$, we observe that \autoref{lem:pl-ord-elem-rel-pl} and \autoref{lem:pl-ord-subset-rel-pl-is-pl-ord} use no other assumptions than the axiom of set induction, thus their relativisations to any transitive set still hold, and we can relativise \autoref{prop:pl-ord-class-trans}:
  \[L_\alpha \vDash \forall x \in \PlOrd \ \forall y \in x \ y \in \PlOrd.\]
  Here, $\alpha$ is by definition transitive, thus $\beta \subseteq \alpha$ and $L_\beta \subseteq L_\alpha$. By downward absoluteness again, we know that $y \in L_\beta$ satisfies $L_\beta \vDash y \in \PlOrd$. By the inductive hypothesis, this means $y \in \beta$, i.e.\ $x \subseteq \beta$.

  Finally, we observe that \autoref{lem:subset-pl-ord-sat-relpl} also relativises to the transitive set $L_\alpha$, thus we have $L_\alpha \vDash \relpl_\beta\mleft(x\mright)$. However, $\relpl_\beta\mleft(x\mright)$ is a $\Delta_0$-formula, so it also holds in the original universe. Thus, $x \in \PlOrd$, and $\alpha \in \PlOrd$ immediately implies that $x \in a$.
\end{proof}

We shall denote the \emph{plump constructible universe} as $L_\pl = \bigcup_{\alpha \in \PlOrd} L_\alpha$.

\begin{corollary}
  \label{cor:l-preserve-pl-ord}
  Assuming $\mathrm{PlUb}$, then $\PlOrd = \PlOrd^{L_\pl}$.
\end{corollary}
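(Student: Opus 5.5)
We must show that for every $x \in L_\pl$, $x \in \PlOrd$ holds if and only if $L_\pl \vDash x \in \PlOrd$, and also that $\PlOrd \subseteq L_\pl$, so that the two classes coincide as subclasses of $L_\pl$. The plan has three steps: show $\PlOrd \subseteq L_\pl$, then prove each of the two inclusions.

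\textbf{Step 1: $\PlOrd \subseteq L_\pl$.} Fix $\alpha \in \PlOrd$. By $\mathrm{PlUb}$ (via the equivalent form) $\alpha^\pls$ exists, and it lies in $\PlOrd$ by \autoref{lem:pl-successor-still-pl}. Since $\alpha \in \alpha^\pls$ and $L_{\alpha^\pls} = \bigcup_{\beta \in \alpha^\pls} \dd\mleft(L_\beta\mright)$, it suffices to know $\alpha \in \dd\mleft(L_\alpha\mright)$. This is exactly \autoref{lem:uniform-def-pl-ord-in-l}. Hence $\alpha \in L_{\alpha^\pls} \subseteq L_\pl$.

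\textbf{Step 2: $\PlOrd \subseteq \PlOrd^{L_\pl}$.} $L_\pl$ is a transitive class, being a union of transitive sets $L_\alpha$. The paper notes that $\PlOrd$ is $\Pi_1$: the defining formula quantifies over subsets $\gamma \subseteq \beta$, i.e.\ it is $\forall \gamma\,(\gamma \subseteq \beta \to \dots)$ with a $\Delta_0$ matrix. Such formulas are downward absolute to transitive classes. So any $\alpha \in \PlOrd$, which lies in $L_\pl$ by Step 1, satisfies $L_\pl \vDash \alpha \in \PlOrd$.

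\textbf{Step 3: $\PlOrd^{L_\pl} \subseteq \PlOrd$.} This is the main obstacle, since $\Pi_1$ statements need not go upward. Take $x \in L_\pl$ with $L_\pl \vDash x \in \PlOrd$. Then $x \in L_\alpha$ for some $\alpha \in \PlOrd$, so $x \in \dd\mleft(L_\beta\mright)$ for some $\beta \in \alpha$. I would mimic the second half of the proof of \autoref{lem:uniform-def-pl-ord-in-l}, but at the level of $L_\pl$ rather than $L_\alpha$, as follows.

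Relativize \autoref{prop:pl-ord-class-trans} to the transitive class $L_\pl$: every $y \in x$ satisfies $L_\pl \vDash y \in \PlOrd$. By downward absoluteness from $L_\pl$ to the transitive set $L_\beta$, we get $L_\beta \vDash y \in \PlOrd$. By \autoref{lem:uniform-def-pl-ord-in-l}, $y \in \beta$, so $x \subseteq \beta$.

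Next, $\beta \in L_\pl$ and $L_\pl \vDash \beta \in \PlOrd$ by Step 2. So the relativization of \autoref{lem:subset-pl-ord-sat-relpl} to $L_\pl$ gives $L_\pl \vDash \relpl_\beta\mleft(x\mright)$. This formula is $\Delta_0$, hence absolute, so $\relpl_\beta\mleft(x\mright)$ holds in $V$. Since $x \subseteq \beta$ and $\beta \in \PlOrd$, \autoref{lem:pl-ord-subset-rel-pl-is-pl-ord} yields $x \in \PlOrd$.

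The delicate points are two. First, we need that these lemmas relativize to a transitive class; they use only set induction, which holds relative to $L_\pl$ by $\in$-induction in $V$. Second, we need that $\Pi_1$ satisfaction in $L_\pl$ restricts correctly to $L_\beta$; this holds because it is restriction of a universal quantifier to a transitive subset.
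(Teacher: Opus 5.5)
Your proof is correct, and Steps 1 and 2 match the paper's argument for $\PlOrd \subseteq \PlOrd^{L_\pl}$.

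Step 3 takes a longer route than necessary. The paper applies downward absoluteness to $x$ itself. Since $x \in L_\alpha \subseteq L_\pl$, the hypothesis $L_\pl \vDash x \in \PlOrd$ gives $L_\alpha \vDash x \in \PlOrd$. Then \autoref{lem:uniform-def-pl-ord-in-l} at level $\alpha$ gives $x \in \alpha$ directly, and $x \in \PlOrd$ follows by \autoref{prop:pl-ord-class-trans}.

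You instead descend only for the elements $y \in x$, down to $L_\beta$. You then rebuild the plumpness of $x$ via $\relpl_\beta(x)$ and \autoref{lem:pl-ord-subset-rel-pl-is-pl-ord}. In effect, this re-runs the second half of the proof of \autoref{lem:uniform-def-pl-ord-in-l} with $L_\pl$ in place of $L_\alpha$.

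That is valid. Your justification for relativizing \autoref{prop:pl-ord-class-trans} and \autoref{lem:subset-pl-ord-sat-relpl} to the transitive class $L_\pl$ is fine, since set induction relativizes to transitive classes. However, it buys nothing extra. The ``upward'' difficulty you flag as the main obstacle is exactly what \autoref{lem:uniform-def-pl-ord-in-l} already handles. Using that lemma as a black box at level $\alpha$ reduces Step 3 to one line and avoids two additional relativizations to a proper class.
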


\begin{proof}
  For any $\alpha \in \PlOrd$, by \autoref{lem:uniform-def-pl-ord-in-l}, $\alpha \in \dd\mleft(L_\alpha\mright)$, thus $\alpha \in L_\beta$ for any $\beta \in \PlOrd$ such that $\alpha \in \beta$. We also have $L_\pl \vDash \alpha \in \PlOrd$ by downward absoluteness, thus $\PlOrd \subseteq \PlOrd^{L_\pl}$.

  On the other hand, suppose that $L_\pl \vDash x \in \PlOrd$. Then there is some $\alpha \in \PlOrd$ such that $x \in L_\alpha$. By downward absoluteness, $L_\alpha \vDash x \in \PlOrd$ as well. However, by \autoref{lem:uniform-def-pl-ord-in-l} this means $x \in \alpha$, which implies $x \in \PlOrd$ by \autoref{lem:pl-ord-elem-rel-pl}.
\end{proof}

We have shown that $L_\pl$ contains the same plump ordinals as $V$. We want to proceed to verify that it is also a model of $\mathrm{IKP}$. First note that we cannot show that $\omega$ is a plump ordinal. Therefore, to show $\omega \in L_\pl$, we need the following helpful lemma:

\begin{lemma}
  \label{lem:delta-0-def-omega}
  Let $x$ be any transitive set such that $\omega \subseteq x$, then
  \[\omega = \left\{y \in x : y \in \Ord \land \left(y = \varnothing \lor \exists z \in y \ y = z^+\right) \land \forall z \in y \left(z = \varnothing \lor \exists t \in y \ z = t^+\right)\right\}.\]
\end{lemma}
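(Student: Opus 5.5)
We prove the two inclusions separately. Write $\varphi\mleft(y\mright)$ for the $\Delta_0$ condition on the right-hand side, and $S$ for the set it defines. $S$ exists by $\Delta_0$-separation, since $\Ord$ is $\Delta_0$ (transitive set of transitive sets). We take $\omega$ in $\mathrm{IKP}$ to be the least inductive set, i.e.\ the intersection of all sets containing $\varnothing$ and closed under $z \mapsto z^+$. Strong infinity gives its existence and the induction principle: any $\Delta_0$-definable (indeed any) subclass of $\omega$ containing $\varnothing$ and closed under thin successor is all of $\omega$.

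\emph{Inclusion $\omega \subseteq S$.} Since $\omega \subseteq x$, it suffices to show every $n \in \omega$ satisfies $\varphi\mleft(n\mright)$. We argue by induction on $\omega$.
\begin{itemize}
  \item $\varnothing$ satisfies $\varphi$ trivially.
  \item Suppose $\varphi\mleft(n\mright)$; we check $\varphi\mleft(n^+\mright)$. First, $n^+ \in \Ord$ since the thin successor of an ordinal is an ordinal. Second, $n^+ = n^+$ with $n \in n^+$, giving the successor clause. Third, for $z \in n^+$ we have $z \in n$ or $z = n$. If $z \in n$, the last clause of $\varphi\mleft(n\mright)$ yields $z = \varnothing$ or $z = t^+$ for some $t \in n \subseteq n^+$. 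If $z = n$, the middle clause of $\varphi\mleft(n\mright)$ yields $n = \varnothing$ or $n = t^+$ with $t \in n^+$.
\end{itemize}
Intuitionistically this case split is legitimate, because membership in $n \cup \left\{n\right\}$ is literally a disjunction.

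\emph{Inclusion $S \subseteq \omega$.} This is the main obstacle, since we cannot use trichotomy or a least-element argument on $y$. Instead we use set induction (on $\in$): we prove $\forall y \left(\varphi\mleft(y\mright) \rightarrow y \in \omega\right)$.
\begin{itemize}
  \item \emph{Heredity of $\varphi$.} First check that $\varphi$ passes to elements: if $\varphi\mleft(y\mright)$ and $z \in y$, then $\varphi\mleft(z\mright)$. Here $z \in \Ord$, and the middle clause for $z$ is exactly the last clause of $y$ instantiated at $z$, with $t \in z$ since $z = t^+ \ni t$. For the last clause of $z$, any $w \in z$ lies in $y$ by transitivity. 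So $w = \varnothing$ or $w = t^+$ with $t \in y$, and then $t \in w \subseteq z$ because $z$ is transitive. Hence the witness $t$ lies in $z$.
  \item \emph{The induction step.} Given $\varphi\mleft(y\mright)$, use the middle clause. If $y = \varnothing$, then $y \in \omega$. If $y = z^+$ with $z \in y$, then by heredity $\varphi\mleft(z\mright)$ holds, so by the inductive hypothesis $z \in \omega$, and hence $y = z^+ \in \omega$ by closure of $\omega$ under successor.
\end{itemize}
This argument needs no excluded middle, only set induction (available in $\mathrm{IKP}$) and the case split given by the disjunction in $\varphi$. Combining the two inclusions gives the displayed equality.

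The role of the hypotheses: transitivity of $x$ is not really needed beyond $\omega \subseteq x$. It guarantees that the definition is meant to be evaluated in $x$ with $\Delta_0$-absoluteness, which is how the lemma will be used to put $\omega$ into $\dd\mleft(L_\alpha\mright)$ later.
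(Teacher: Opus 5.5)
Your proof is correct and follows essentially the same route as the paper. For $\omega \subseteq S$ you use induction on $\omega$, carrying the full condition through the induction where the paper inducts only on the middle clause and gets the last clause from transitivity. For $S \subseteq \omega$ you use set induction together with heredity of the condition to the predecessor $z$. Quantifying over all $y$ rather than only $y \in x$ is a harmless sharpening. Your remark about the hypothesis is also accurate: for the equality itself, transitivity of $x$ is only implicitly used in the paper's version, to keep $z \in x$ for the inductive hypothesis. Its real role is $\Delta_0$-absoluteness when the lemma is applied inside $\dd(L_\alpha)$.
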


\begin{proof}
  For the forward direction, we can easily show by induction on $\omega$ such that
  \[\forall n \in \omega \left(n = \varnothing \lor \exists m \in n \ n = m^+\right).\]
  That every $n \in \omega$ additionally satisfies $\forall m \in n \left(m = \varnothing \lor \exists k \in n \ m = k^+\right)$ follows immediately from transitivity.

  For the backward direction, we can show by set induction on $y$ that every $y \in x$ lying in the set on the right-hand side is also in $\omega$: we know that either $y = \varnothing$ or there exists $z \in y$ such that $y = z^+$, and the former case is trivial. In the latter case, we additionally know that either $z = \varnothing$ or there exists $t \in y$ such that $z = t^+$. If this $t$ exists, then $t \in z$ by definition of the successor $t^+$, and for any $s \in z$, we know by transitivity that additionally $s \in y$ and hence either $s = \varnothing$ or there exists $r \in y$ such that $s = r^+$, where by transitivity we can easily check that also $r \in z$. In other words, we have
  \[\left(z = \varnothing \lor \exists t \in z \ z = t^+\right) \land \forall s \in z \left(s = \varnothing \lor \exists r \in z \ s = r^+\right).\]
  By the inductive hypothesis, we must have $z \in \omega$ and hence $y = z^+ \in \omega$ as well.
\end{proof}

\begin{proposition}
  \label{prop:l-pl-ikp-minus-coll}
  Assuming $\mathrm{PlUb}$, then $L_\pl \vDash \mathrm{IKP} - \text{$\Delta_0$-collection} + \mathrm{PlUb}$.
\end{proposition}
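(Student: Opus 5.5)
I would verify the axioms of $\mathrm{IKP}$ other than $\Delta_0$-collection one by one in the transitive class $L_\pl = \bigcup_{\alpha \in \PlOrd} L_\alpha$. The key structural fact to establish first is that $L_\pl$ is a directed union: given $\alpha_1, \alpha_2 \in \PlOrd$, the set $\alpha_1 \cup \alpha_2 = \bigcup \left\{\alpha_1, \alpha_2\right\}$ is in $\PlOrd$ by \autoref{lem:pl-union-still-pl}. A straightforward induction shows that $\beta \subseteq \alpha$ implies $L_\beta \subseteq L_\alpha$, so $L_{\alpha_1} \cup L_{\alpha_2} \subseteq L_{\alpha_1 \cup \alpha_2}$. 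By $\mathrm{PlUb}$, every $L_\alpha$ with $\alpha \in \PlOrd$ is also an element of $L_{\alpha^\pls}$, since $L_\alpha \in \dd\mleft(L_\alpha\mright)$.

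Transitivity of each $L_\alpha$ makes $L_\pl$ transitive, so extensionality, set induction (as a schema, by transitivity and induction in $V$) and the $\Delta_0$ absoluteness needed later are immediate. For pairing and union I take $x, y \in L_{\alpha_1}, L_{\alpha_2}$, pass to $\alpha = \alpha_1 \cup \alpha_2$, and note that $\left\{x, y\right\}$ and $\bigcup x$ are $\Delta_0$-definable subsets of $L_\alpha$ (using transitivity), hence lie in $\dd\mleft(L_\alpha\mright) \subseteq L_{\alpha^\pls}$. For $\Delta_0$-separation with parameters from $L_\alpha$ and $a \in L_\alpha$, the set $\left\{z \in a : \varphi(z)\right\}$ equals $\left\{z \in L_\alpha : z \in a \land \varphi^{L_\alpha}(z)\right\}$ by $\Delta_0$-absoluteness for transitive sets. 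It therefore belongs to $\dd\mleft(L_\alpha\mright) \subseteq L_{\alpha^\pls}$, and by absoluteness again it is the correct separation set inside $L_\pl$.

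For strong infinity, $\omega$ must be shown to be in $L_\pl$ even though it need not be plump. I plan to use $\omega^\pl \in \PlOrd$ and first prove $n \subseteq n^\pl$ for $n \in \omega$, or more simply $\omega \subseteq L_{\omega^\pl}$. Indeed, each natural number $n$ lies in $\PlOrd$, because by \autoref{lem:delta-0-def-omega}-style reasoning finite ordinals are decidable, so trichotomy holds on them. Hence each $n \in L_{n^+}$ by \autoref{lem:uniform-def-pl-ord-in-l}, and $\omega = \bigcup_{n} n^+$ gives $\omega \subseteq L_{\omega^\pl}$, using $n^+ \subseteq \omega^\pl$. Then \autoref{lem:delta-0-def-omega} with $x = L_{\omega^\pl}$ exhibits $\omega$ as a $\Delta_0$-definable subset of $L_{\omega^\pl}$, so $\omega \in L_{(\omega^\pl)^\pls} \subseteq L_\pl$. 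Since being $\omega$ is $\Delta_0$, it is absolute.

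Finally, $\mathrm{PlUb}$ in $L_\pl$ follows from \autoref{cor:l-preserve-pl-ord}. For $\alpha \in \PlOrd^{L_\pl} = \PlOrd$, the ordinal $\alpha^\pls \in \PlOrd$ lies in $L_\pl$ and satisfies $\PlOrd$ there. I expect the main obstacle to be the infinity step, specifically justifying that each $n \in \omega$ is plump and that $n^+ \subseteq \omega^\pl$. If needed, I would prove by induction on $n$ that $n^\pl$ contains $n$ and that $n \in \PlOrd$, using decidability of equality on $\omega$.
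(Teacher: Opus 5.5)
Your handling of extensionality, set induction, pairing, union, $\Delta_0$-separation and $\mathrm{PlUb}$ is fine and agrees with the paper. The strong infinity step, however, rests on a false claim: natural numbers are \emph{not} provably plump in $\mathrm{IKP} + \mathrm{PlUb}$.

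Decidability of equality on $\omega$ only gives trichotomy between natural numbers. Plumpness of $n$, by contrast, quantifies over arbitrary subsets $\gamma \subseteq \beta$ of elements $\beta \in n$, and these need not be natural numbers. Concretely, for any $\gamma \subseteq 1$ the condition $\relpl_2\mleft(\gamma\mright)$ holds trivially, since the only candidates for $\delta \in \gamma$ and $\varepsilon \subseteq \delta$ are $\varnothing$. So $2 \in \PlOrd$ would force $\gamma = \varnothing \lor \gamma = 1$ for every $\gamma \subseteq 1$. Applied to $\left\{\varnothing : x \in y\right\}$, this yields restricted excluded middle.

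Your auxiliary claims fail for the same reason. By \autoref{prop:pl-ord-class-trans}, every element of a plump ordinal is plump. So $3 \subseteq 3^\pl$, or $n^+ \subseteq \omega^\pl$ for any $n \geq 2$, would again make $2$ plump; indeed $2^\pl = \mathcal{P}\mleft(1\mright)$. Hence \autoref{lem:uniform-def-pl-ord-in-l} cannot be applied to $n$, and your derivation of $\omega \subseteq L_{\omega^\pl}$ does not go through.

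The missing idea is to index the levels by $n^\pl$ without ever requiring $n$ itself to be plump. This is what the paper does, using that each $n$ is hereditarily finite and hence $\Delta_0$-definable. Prove by induction on $n \in \omega$ that $n \in \dd\mleft(L_{n^\pl}\mright)$.

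For $n = \varnothing$ this is just $\varnothing \in \dd\mleft(L_\varnothing\mright)$.

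For $n = m^+$, we have $m^\pl \in \mleft(m^\pl\mright)^\pls \subseteq n^\pl$. The induction hypothesis then gives $m \in \dd\mleft(L_{m^\pl}\mright) \subseteq L_{n^\pl}$, so $n \subseteq L_{n^\pl}$ by transitivity. Therefore $n = \left\{z \in L_{n^\pl} : z \in m \lor z = m\right\} \in \dd\mleft(L_{n^\pl}\mright)$.

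Since also $n^\pl \in \mleft(n^\pl\mright)^\pls \subseteq \omega^\pl$, this gives $\omega \subseteq L_{\omega^\pl}$. Your final appeal to \autoref{lem:delta-0-def-omega} then works as planned.
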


\begin{proof}
  Firstly, the axiom of empty set is trivial; the axioms of extensionality and set induction are also trivial since $L_\pl$ is a transitive class. The axioms of union and $\Delta_0$-separation follows from the fact that any $\Delta_0$-definable subset of $L_\alpha$ lies in $\dd\mleft(L_\alpha\mright)$ and hence $L_{\alpha^\pls}$, which exists by the assumption $\mathrm{PlUb}$. Likewise, for any $a \in L_\alpha$, $b \in L_\beta$ where $\alpha, \beta \in \PlOrd$, we know by \autoref{lem:pl-union-still-pl} that $\alpha \cup \beta \in \PlOrd$ with $\left\{a, b\right\} \subseteq L_{\alpha \cup \beta}$. Thus
  \[\left\{a, b\right\} \in \dd\mleft(L_{\alpha \cup \beta}\mright) \subseteq L_{\left(\alpha \cup \beta\right)^\pls}.\]

  For the axiom of strong infinity, we will consider a specific $\Sigma_1$-formulation
  \[\exists \omega \left(\varnothing \in \omega \land \forall x \in \omega \ x^+ \in \omega \land \forall x \in \omega \left(x = \varnothing \lor \exists y \in \omega \ x = y^+\right)\right).\]
  This is simply asserting that a set $\omega$ satisfying some $\Delta_0$-formula exists and the axiom of set induction suffices to show that $\omega$ defined in this way is the unique intersection of all inductive sets. Therefore, by absoluteness of $\Delta_0$-formulae, it suffices to show that $\omega \in L_\pl$.

  Now, each $n \in \omega$ is hereditarily finite, thus uniquely definable by a $\Delta_0$-formula. Therefore, a simple induction on $\omega$ shows that
  \[\forall n \in \omega \ n \in \dd\mleft(L_{n^\pl}\mright),\]
  i.e.\ $\omega \subseteq L_{\omega^\pl}$. By \autoref{lem:delta-0-def-omega}, $\omega$ is definable through $\Delta_0$-separation from any transitive superset, thus $\omega \in \dd\mleft(L_{\omega^\pl}\mright) \subseteq L_{\left(\omega^\pl\right)^\pls}$ as we wanted.

  Finally, $L_\pl \vDash \mathrm{PlUb}$ follows trivially from \autoref{cor:l-preserve-pl-ord} that $\PlOrd^{L_\pl} = \PlOrd$.
\end{proof}

\subsection{Bounding axioms}
\label{subsec:bounding-axioms}

The difficulty of relativising $\Delta_0$-collection to $L_\pl$ lies in the fact that $\PlOrd$ is a $\Pi_1$-class, thus the premise $\forall x \in a \ \exists y \in L_\pl \ \varphi\mleft(x, y\mright)$, where $\varphi\mleft(x, y\mright)$ is $\Delta_0$, fails to be a $\Sigma_1$-formula. This prompts us naturally to consider adding the following axiom scheme
\[\forall x \in a \ \exists \alpha \in \PlOrd \ \varphi\mleft(x, \alpha\mright) \rightarrow \exists \alpha \in \PlOrd \ \forall x \in a \ \exists \beta \in \alpha \ \varphi\mleft(x, \beta\mright),\]
which shall be denoted as $\mathrm{PlB}\Gamma$, read as \emph{$\Gamma$-bounding for plump ordinals}, when $\varphi$ ranges over all $\Gamma$-formulae.

\begin{lemma}
  \label{lem:delta-0-bounding-implies-plub}
  $\mathrm{PlB}\Delta_0$ implies $\mathrm{PlUb}$.
\end{lemma}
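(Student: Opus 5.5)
Fix $\alpha \in \PlOrd$; the goal is to find some $\beta \in \PlOrd$ with $\alpha \in \beta$. The idea is to apply $\mathrm{PlB}\Delta_0$ to a trivially satisfied premise over the set $a = \left\{\alpha\right\}$ (or $a = \alpha^+$), with the $\Delta_0$-formula $\varphi\mleft(x, \gamma\mright) \coloneqq x = \gamma$. The premise $\forall x \in \left\{\alpha\right\} \ \exists \gamma \in \PlOrd \ x = \gamma$ holds, witnessed by $\gamma = \alpha$ itself, since $\alpha \in \PlOrd$ by assumption. The scheme then yields some $\beta \in \PlOrd$ such that $\forall x \in \left\{\alpha\right\} \ \exists \gamma \in \beta \ x = \gamma$. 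Instantiating $x = \alpha$ gives $\gamma \in \beta$ with $\gamma = \alpha$, so $\alpha \in \beta$, which is exactly $\mathrm{PlUb}$.

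The only point to check is that $\left\{\alpha\right\}$ is a set, which follows from pairing in $\mathrm{IKP}$. We also need that $x = \gamma$ counts as a $\Delta_0$-formula, whether equality is primitive or defined by extensionality via bounded quantifiers. Either way no real obstacle arises. If one prefers to avoid equality, the formula $\varphi\mleft(x, \gamma\mright) \coloneqq x \subseteq \gamma \land \gamma \subseteq x$ works equally well.

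Optionally, the conclusion can be strengthened to obtain the plump successor directly via the earlier lemma equating $\mathrm{PlUb}$ with the existence of $\alpha^\pls$, but this is not needed for the statement.
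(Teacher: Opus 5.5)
Your proof is correct and is essentially the same as the paper's: you apply $\mathrm{PlB}\Delta_0$ to the set $\{\alpha\}$ with the $\Delta_0$-formula $x = \gamma$, witnessed by $\alpha \in \PlOrd$ itself. The bounding conclusion then yields a plump ordinal containing $\alpha$, exactly as the paper argues.
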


\begin{proof}
  For any $\alpha \in \PlOrd$, we trivially have $\forall x \in \left\{\alpha\right\} \ \exists \beta \in \PlOrd \ x = \beta$. Therefore, by $\Delta_0$-bounding, there must exist $\gamma \in \PlOrd$ such that
  \[\forall x \in \left\{\alpha\right\} \ \exists \beta \in \gamma \ x = \beta,\]
  i.e.\ $\alpha \in \gamma$ as needed.
\end{proof}

We now have the following:

\begin{proposition}
  \label{prop:l-preserve-sigma-1-bounding}
  Assuming $\mathrm{PlB}\Sigma_1$, then $L_\pl \vDash \text{$\Sigma_1$-collection} + \mathrm{PlB}\Sigma_1$.
\end{proposition}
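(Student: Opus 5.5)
Both parts go the same way. We relativise the relevant formula to $L_\pl$, turn the resulting hypothesis into a $\Sigma_1$ statement about plump ordinals in $V$, and apply $\mathrm{PlB}\Sigma_1$ there. Throughout we use that $L_\pl = \bigcup_{\alpha \in \PlOrd} L_\alpha$, that $\alpha \mapsto L_\alpha$ is $\Sigma_1$-definable, and that $\mathrm{PlUb}$ holds by \autoref{lem:delta-0-bounding-implies-plub}. So \autoref{prop:l-pl-ikp-minus-coll} and \autoref{cor:l-preserve-pl-ord} are available.

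\emph{$\Sigma_1$-collection in $L_\pl$.} Fix $a \in L_\pl$ and a $\Sigma_1$ formula $\varphi$ with parameters in $L_\pl$. Assume $L_\pl \vDash \forall x \in a \ \exists y \ \varphi\mleft(x, y\mright)$. For each $x \in a$ there is then some $y \in L_\pl$ with $\varphi^{L_\pl}\mleft(x, y\mright)$. Writing $\varphi = \exists w \ \psi$ with $\psi \in \Delta_0$, the witnesses $y, w$ lie in some $L_\beta$ with $\beta \in \PlOrd$, because the union of two plump ordinals is plump by \autoref{lem:pl-union-still-pl}. Hence
\[\forall x \in a \ \exists \beta \in \PlOrd \ \exists y, w \in L_\beta \ \psi\mleft(x, y, w\mright).\]
The formula after $\beta \in \PlOrd$ is $\Sigma_1$ in $\beta$, since it says $\exists \ell \left(\ell = L_\beta \land \exists y, w \in \ell \ \psi\right)$. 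So $\mathrm{PlB}\Sigma_1$ yields $\alpha \in \PlOrd$ such that every $x \in a$ has its witnesses in some $L_\beta$ with $\beta \in \alpha$, and hence in $L_\alpha$.

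Next, form $b = \left\{y \in L_\alpha : \exists x \in a \ \exists w \in L_\alpha \ \psi\mleft(x, y, w\mright)\right\}$. This set is $\Delta_0$-definable over $L_\alpha$ with parameters $a$ and the others (enlarge $\alpha$ so these lie in $L_\alpha$, again via \autoref{lem:pl-union-still-pl}). Therefore $b \in \dd\mleft(L_\alpha\mright) \subseteq L_{\alpha^\pls}$ by $\mathrm{PlUb}$ and \autoref{lem:pl-successor-still-pl}. Since $\Delta_0$ formulae are absolute for the transitive class $L_\pl$, $b$ witnesses collection there. Strict $\Sigma_1$-collection, with $\forall x \in a \ \exists y \in b \ \varphi$, follows from the same $b$. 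Together with \autoref{prop:l-pl-ikp-minus-coll} this also gives full $\mathrm{IKP}$ in $L_\pl$.

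\emph{$\mathrm{PlB}\Sigma_1$ in $L_\pl$.} Assume $L_\pl \vDash \forall x \in a \ \exists \alpha \in \PlOrd \ \varphi\mleft(x, \alpha\mright)$ with $\varphi \in \Sigma_1$. By \autoref{cor:l-preserve-pl-ord}, $\PlOrd^{L_\pl} = \PlOrd$. So for each $x \in a$ there is a genuine plump $\gamma$ and a witness $w \in L_\pl$ with $\psi^{L_\pl}\mleft(x, \gamma, w\mright)$, where $\psi \in \Delta_0$ is absolute. Pick plump $\beta$ with $w \in L_\beta$ and set $\delta = \gamma \cup \beta^\pls$, which is plump. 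We get
\[\forall x \in a \ \exists \delta \in \PlOrd \ \exists \gamma \in \delta \ \exists w \in L_\delta \ \psi\mleft(x, \gamma, w\mright).\]
This is again $\Sigma_1$ in $\delta$ (we need $\gamma \in \delta$ so that $\gamma$ is recoverable; equally we could simply require $\gamma \subseteq \delta$). Applying $\mathrm{PlB}\Sigma_1$ in $V$ gives a plump $\alpha$ bounding all the $\delta$. Since $\alpha$ is transitive, $\gamma \in \delta \in \alpha$ gives $\gamma \in \alpha$, and $w \in L_\delta \subseteq L_\alpha \subseteq L_\pl$. Also $\alpha \in L_\pl$ and $L_\pl \vDash \alpha \in \PlOrd$ by \autoref{cor:l-preserve-pl-ord}. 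Hence $L_\pl \vDash \forall x \in a \ \exists \gamma \in \alpha \ \varphi\mleft(x, \gamma\mright)$.

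\emph{Main obstacle.} The delicate point is the complexity bookkeeping. We must check that ``$\exists w \in L_\beta$'' is genuinely $\Sigma_1$ in $\beta$, which uses the $\Sigma_1$ definition of $L_\beta$ from Lubarsky and the absoluteness of $\dd$. We must also avoid ever quantifying over $L_\pl$ or $\PlOrd$ inside the $\varphi$ fed to $\mathrm{PlB}\Sigma_1$: membership in $\PlOrd$ is only $\Pi_1$, and that is exactly why it must appear as the bounded-outside quantifier of the scheme rather than inside $\varphi$. We must further ensure that the parameters of $\varphi$, which live in $L_\pl$, are absorbed into one $L_\alpha$. I would handle this first by a uniform lemma: for $\beta \in \PlOrd$, ``$\exists y \in L_\beta\, \theta$'' with $\theta \in \Delta_0$ is $\Sigma_1$ in $\beta$.
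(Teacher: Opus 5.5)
Your proof follows the paper's route: relativise, rewrite the hypothesis as $\forall x \in a \ \exists \beta \in \PlOrd$ of a $\Sigma_1$ formula about $L_\beta$, and apply $\mathrm{PlB}\Sigma_1$ in $V$. The paper takes $L_\alpha$ itself as the collecting set. Your $\Delta_0$-definable $b \in \dd\mleft(L_\alpha\mright)$ is a harmless refinement that even gives strong collection.

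There is one concrete slip, in the $\mathrm{PlB}\Sigma_1$ part. The set $\delta = \gamma \cup \beta^\pls$ need not contain $\gamma$ as an element. Since $\gamma \notin \gamma$, you get $\gamma \in \delta$ only when $\gamma \subseteq \beta$. For example, $\gamma = 1$ and $\beta = \varnothing$ give $\beta^\pls = 1$, so $\delta = 1 \not\ni 1$. The displayed formula $\exists \gamma \in \delta \ldots$ is therefore not established as written. Take $\delta = \gamma^\pls \cup \beta^\pls$ instead. It is plump by $\mathrm{PlUb}$, \autoref{lem:pl-successor-still-pl} and \autoref{lem:pl-union-still-pl}. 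It satisfies $\gamma \in \delta$, and $\beta \in \delta$ gives $L_\beta \subseteq L_\delta$. The rest of your argument then goes through.

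Your fallback of ``simply require $\gamma \subseteq \delta$'' does not work as stated. After bounding, the $\gamma$ you recover is just some subset of $\delta$ satisfying $\psi$. It is not known to be plump or to lie in $L_\pl$, so you cannot conclude $\gamma \in \alpha$. To make it work, add the $\Delta_0$ clause $\relpl_\delta\mleft(\gamma\mright)$; it holds for the original witness by \autoref{lem:subset-pl-ord-sat-relpl}. Then \autoref{lem:pl-ord-subset-rel-pl-is-pl-ord} gives $\gamma \in \PlOrd$, and \autoref{prop:pl-ord-closed-under-pl-ord-subset} gives $\gamma \in \alpha$.
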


\begin{proof}
  Assume $a \in L_\pl$ and we have
  \[\forall x \in a \ \exists y, z \in L_\pl \ \varphi\mleft(x, y, z\mright),\]
  where $\varphi\mleft(x, y, z\mright)$ is $\Delta_0$ and hence absolute. This can be equivalently written as
  \[\forall x \in a \ \exists \alpha \in \PlOrd \ \exists y, z \in L_\alpha \ \varphi\mleft(x, y, z\mright),\]
  where $L_\alpha$ is $\Sigma_1$-definable, so by $\mathrm{PlB}\Sigma_1$, there must exist $\alpha \in \PlOrd$ such that $\forall x \in a \ \exists \beta \in \alpha \ \exists y, z \in L_\beta \ \varphi\mleft(x, y, z\mright)$. It follows that
  \[\forall x \in a \ \exists y \in L_\alpha \ \exists z \in L_\pl \ \varphi\mleft(x, y, z\mright)\]
  where $L_\alpha \in L_\pl$. Thus $L_\pl \vDash \text{$\Sigma_1$-collection}$.

  $L_\pl \vDash \mathrm{PlB}\Sigma_1$ follows from an entirely analogous argument by letting $y$ above range over $\PlOrd$ instead, using the fact that $\mathrm{PlUb}$ holds by \autoref{lem:delta-0-bounding-implies-plub} and thus $\PlOrd^{L_\pl} = \PlOrd$ by \autoref{cor:l-preserve-pl-ord}.
\end{proof}

\begin{corollary}
  \label{cor:l-pl-ikp-inner-model}
  Assuming $\mathrm{PlB}\Sigma_1$, then $L_\pl$ is an inner model (in the sense that it contains precisely the same plump ordinals as $V$) satisfying
  \[\mathrm{IKP} + \mathrm{PlB}\Sigma_1 + V = L_\pl.\]
\end{corollary}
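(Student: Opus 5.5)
The plan is to assemble the corollary from \autoref{prop:l-pl-ikp-minus-coll}, \autoref{prop:l-preserve-sigma-1-bounding} and \autoref{cor:l-preserve-pl-ord}, and then verify $V = L_\pl$ inside $L_\pl$ by absoluteness of the $L$-hierarchy.

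First, $\mathrm{PlB}\Sigma_1$ implies $\mathrm{PlB}\Delta_0$, hence $\mathrm{PlUb}$ by \autoref{lem:delta-0-bounding-implies-plub}. So \autoref{prop:l-pl-ikp-minus-coll} gives $L_\pl \vDash \mathrm{IKP} - \text{$\Delta_0$-collection}$. Next, \autoref{prop:l-preserve-sigma-1-bounding} gives $L_\pl \vDash \text{$\Sigma_1$-collection} + \mathrm{PlB}\Sigma_1$, and $\Sigma_1$-collection subsumes $\Delta_0$-collection, so $L_\pl \vDash \mathrm{IKP} + \mathrm{PlB}\Sigma_1$. The inner model claim, that $L_\pl$ has the same plump ordinals as $V$, is exactly \autoref{cor:l-preserve-pl-ord}, available since $\mathrm{PlUb}$ holds.

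The remaining step, which I expect to be the main work, is $L_\pl \vDash V = L_\pl$. I interpret this as $L_\pl \vDash \forall x \ \exists \alpha \in \PlOrd \ x \in L_\alpha$. The key is that the map $\alpha \mapsto L_\alpha$ is absolute for the transitive model $L_\pl$ of $\mathrm{IKP}$. It is defined by $\Sigma_1$-recursion using the $\Delta_0$-definable operation $\dd$, whose definition is absolute between transitive models of $\mathrm{IKP}$, as in Lubarsky's treatment. So for each $\alpha \in \PlOrd$, $L_\pl$ proves that $L_\alpha$ exists and computes it correctly, provided $L_\alpha \in L_\pl$.

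To see $L_\alpha \in L_\pl$, I would argue that $L_\alpha \in \dd\mleft(L_\alpha\mright) \subseteq L_{\alpha^\pls}$, or alternatively that $L_\alpha$ is obtained inside $L_\pl$ by $\Sigma_1$-recursion, which $\mathrm{IKP}$ supports. The first route needs $L_\alpha \subseteq L_\alpha$ to be definable, which is trivial since it is the whole domain. Then $L_\alpha \in \dd\mleft(L_\alpha\mright) \subseteq L_{\alpha^\pls}$, and $\alpha^\pls \in \PlOrd$ by \autoref{lem:pl-successor-still-pl}.

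Finally, combine the pieces. Given $x \in L_\pl$, pick $\alpha \in \PlOrd$ with $x \in L_\alpha$. Then $\alpha \in \PlOrd^{L_\pl}$ by \autoref{cor:l-preserve-pl-ord}, and $L_\alpha^{L_\pl} = L_\alpha$ by absoluteness. Hence $L_\pl \vDash \exists \alpha \in \PlOrd \ x \in L_\alpha$. The only delicate point is checking that the absoluteness of $\dd$ and of the recursion does not require any classical reasoning. I would cite Lubarsky for this, noting that his arguments are carried out in $\mathrm{IKP}$.
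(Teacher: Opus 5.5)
Your proposal is correct and follows the paper's proof, which simply assembles \autoref{prop:l-pl-ikp-minus-coll}, \autoref{prop:l-preserve-sigma-1-bounding} and \autoref{cor:l-preserve-pl-ord}. You also spell out two points the paper leaves implicit: that $\mathrm{PlB}\Sigma_1$ yields $\mathrm{PlUb}$ via \autoref{lem:delta-0-bounding-implies-plub}, and that $L_\pl \vDash V = L_\pl$ because $\alpha \mapsto L_\alpha$ is $\Sigma_1$-defined and provably total in $\mathrm{IKP}$, hence computed correctly in the transitive model $L_\pl$ by upward absoluteness and uniqueness. This is a more accurate justification than calling $\dd$ ``$\Delta_0$-definable'', which it is not strictly.
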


\begin{proof}
  This follows immediately from \autoref{prop:l-pl-ikp-minus-coll}, \autoref{prop:l-preserve-sigma-1-bounding} and \autoref{cor:l-preserve-pl-ord}.
\end{proof}

Observe also that $\mathrm{PlB}\Sigma_1$ is an consequence of $\mathrm{IKP}\mleft(\mathcal{P}\mright)$ (through \autoref{lem:pl-union-still-pl} above), since $\PlOrd$ is a $\Delta_0\mleft(\mathcal{P}\mright)$-class, where the powerset operator is treated as a primitive symbol. However, this does not mean that $\mathrm{IKP}\mleft(\mathcal{P}\mright)$ can also verify that $L_\pl$ is an inner model, since powerset is a more serious obstacle here: we know from \cite{matthews-rathjen24-constructible-universe}*{Theorem 7.12} that even $\mathrm{CZF}\mleft(\mathcal{P}\mright) \nvdash L \vDash \text{Exponentation}$ and I suspect that the same applies to $L_\pl$ easily as well.

\section{Plump ordinal arithmetic}
\label{sec:plump-ord-arith}

\subsection{Basic properties of ordinal arithmetic}

We now work in $\mathrm{IKP} + \mathrm{PlB}\Sigma_1$. Let $\Sigma^\pl$ be the class of formulae with strictly positive occurrences of unbounded $\exists x$ and $\exists x \in \PlOrd$ and no occurrences of $\forall x$ and $\forall x \in \PlOrd$, then $\mathrm{PlB}\Sigma_1$ implies that every $\Sigma^\pl$-formula is equivalent to $\exists \alpha \in \PlOrd \ \varphi\mleft(\alpha\mright)$ where $\varphi$ is $\Sigma_1$, by replacing every inner occurrence of $x \in \PlOrd$ with $\relpl_\alpha\mleft(x\mright)$. This means the usual proof for $\Sigma$-recursion in $\mathrm{IKP}$ carries over for $\Sigma^\pl$-formulae in this system.

Thus following section 7 of Taylor \cite{taylor96-intuitionistic-ordinals}, we can define the usual arithmetic operations on plump ordinals:
\begin{align*}
  \alpha + \beta     & = \alpha \cup \bigcup_{\gamma \in \beta} \left(\alpha + \gamma\right)^{\pl+}, \\
  \alpha \cdot \beta & = \bigcup_{\gamma \in \beta} \left(\alpha \cdot \gamma + \alpha\right),       \\
  \alpha^\beta       & = 1 \cup \bigcup_{\gamma \in \beta} \left(\alpha^\gamma \cdot \alpha\right).
\end{align*}
This will yield different results from the usual ordinal arithmetic, due to the initial step being defined as $\alpha + 1 = \alpha^{\pl+}$. To be clear of ambiguity, in this exposition we will only use plump ordinal arithmetic, and thus will not bother with adding decorators everywhere to distinguish them from usual ordinal arithmetic operations. Also notice that $\beta \in \Ord$ suffices for the following:

\begin{lemma}
  For any $\alpha \in \PlOrd$, $\beta \in \Ord$, we have $\alpha + \beta, \alpha \cdot \beta, \alpha^\beta \in \PlOrd$.
\end{lemma}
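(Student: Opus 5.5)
We prove the three claims in sequence, each by set induction on $\beta \in \Ord$ with $\alpha \in \PlOrd$ fixed. The tools are \autoref{lem:pl-successor-still-pl}, which needs $\mathrm{PlUb}$ for the plump successor to exist; this follows from $\mathrm{PlB}\Sigma_1$ via \autoref{lem:delta-0-bounding-implies-plub}. We also use \autoref{lem:pl-union-still-pl}: any union of a set of plump ordinals is plump. The operations are introduced by $\Sigma^\pl$-recursion, so each $\alpha + \beta$ is a set obtained by replacement over $\beta$. Hence the unions below are unions of \emph{sets} of values, and \autoref{lem:pl-union-still-pl} applies once we know every value is in $\PlOrd$. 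At no point do we need $\beta$ itself to be plump; we only recurse along its elements. This is why $\beta \in \Ord$ suffices.

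\textbf{Addition.} Assume inductively that $\alpha + \gamma \in \PlOrd$ for all $\gamma \in \beta$. Then $(\alpha + \gamma)^\pls$ exists and lies in $\PlOrd$ by \autoref{lem:pl-successor-still-pl}. The set $s = \{\alpha\} \cup \{(\alpha+\gamma)^\pls : \gamma \in \beta\}$ exists by pairing, replacement and union, and satisfies $s \subseteq \PlOrd$. Since $\alpha + \beta = \bigcup s$ (using $\alpha = \bigcup\{\alpha\}$), \autoref{lem:pl-union-still-pl} gives $\alpha + \beta \in \PlOrd$.

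\textbf{Multiplication and exponentiation.} Assume inductively that $\alpha \cdot \gamma \in \PlOrd$ for $\gamma \in \beta$. By the addition case, applied with first argument $\alpha\cdot\gamma \in \PlOrd$ and second argument $\alpha \in \Ord$, we get $\alpha\cdot\gamma + \alpha \in \PlOrd$. Then $\alpha \cdot \beta$ is a union of a set of plump ordinals, so it is plump. Exponentiation works the same way. First, $1 \in \PlOrd$ (shown in \autoref{prop:ikp-maybe-no-1-pls}), so $1 = \bigcup\{1\}$ can be absorbed into the union. Second, $\alpha^\gamma \cdot \alpha \in \PlOrd$ by the multiplication case, applied with first argument $\alpha^\gamma \in \PlOrd$ (the inductive hypothesis) and second argument $\alpha$.

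\textbf{Main obstacle.} The only delicate point is ordering the inductions. The addition statement must be proved for \emph{all} plump first arguments simultaneously, i.e.\ as $\forall \alpha \in \PlOrd\ \forall \beta \in \Ord$, by induction on $\beta$. This is what lets it be invoked at the new first argument $\alpha\cdot\gamma$; the analogous uniformity for multiplication is needed for exponentiation. We also have to check that $\Sigma^\pl$-recursion really delivers total set-valued functions, so that the relevant families are sets rather than classes. This is the remark made just before the definitions, and we take it as given.
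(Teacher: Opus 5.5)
Your proposal is correct and is essentially the paper's proof: induction on $\beta$ using \autoref{lem:pl-successor-still-pl} and \autoref{lem:pl-union-still-pl}, with $\mathrm{PlUb}$ supplied by $\mathrm{PlB}\Sigma_1$. Your extra bookkeeping spells out what the paper's one-line proof leaves implicit: proving addition for all plump first arguments before invoking it at $\alpha\cdot\gamma$, doing likewise for multiplication before exponentiation, and noting $1 \in \PlOrd$.
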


\begin{proof}
  This follows from induction on $\beta$, using \autoref{lem:pl-successor-still-pl} and \autoref{lem:pl-union-still-pl}.
\end{proof}

When additionally $\beta \in \PlOrd$, we check that the following basic properties of arithmetic hold. Note that many short proofs here are specifically for plump ordinals and do not work for the larger class $\Ord$ in an intuitionistic context:

\begin{lemma}
  \label{lem:pl-add-inj}
  For any $\alpha, \beta, \gamma \in \PlOrd$, $\alpha + \beta \subseteq \alpha + \gamma$ implies $\beta \subseteq \gamma$. In other words, the class function $F_\alpha : \PlOrd \rightarrow \PlOrd$ given by $\beta \mapsto \alpha + \beta$ is injective.
\end{lemma}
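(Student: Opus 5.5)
I would prove the statement by set induction on $\beta$, keeping $\gamma$ universally quantified. Concretely, I fix $\alpha \in \PlOrd$ and show for every $\beta \in \PlOrd$:
\[\forall \gamma \in \PlOrd \left(\alpha + \beta \subseteq \alpha + \gamma \rightarrow \beta \subseteq \gamma\right).\]
Elements of $\beta$ are again plump by \autoref{prop:pl-ord-class-trans}, so the inductive hypothesis applies to every $\delta \in \beta$.

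So assume $\alpha + \beta \subseteq \alpha + \gamma$ and take $\delta \in \beta$; the goal is $\delta \in \gamma$. By the previous lemma, $\alpha + \delta \in \PlOrd$. Trivially $\alpha + \delta \subseteq \alpha + \delta$ holds, so $\alpha + \delta \in \left(\alpha + \delta\right)^\pls$. By the defining equation of $\alpha + \beta$, this gives $\alpha + \delta \in \alpha + \beta \subseteq \alpha + \gamma$. Unfolding the definition of $\alpha + \gamma$, there are two cases.

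In the first case, $\alpha + \delta \in \alpha$. But $\alpha \subseteq \alpha + \delta$ directly from the definition of addition, so $\alpha + \delta \in \alpha + \delta$. This contradicts the fact, proved by set induction, that no set is a member of itself. The contradiction yields $\delta \in \gamma$ intuitionistically, via ex falso.

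In the second case, there is $\varepsilon \in \gamma$ with $\alpha + \delta \in \left(\alpha + \varepsilon\right)^\pls$, so $\alpha + \delta \subseteq \alpha + \varepsilon$. Here $\varepsilon$ is plump by \autoref{prop:pl-ord-class-trans}, so the inductive hypothesis for $\delta$ gives $\delta \subseteq \varepsilon$. Since $\delta, \gamma \in \PlOrd$ and $\delta \subseteq \varepsilon \in \gamma$, \autoref{prop:pl-ord-closed-under-pl-ord-subset} yields $\delta \in \gamma$.

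The only delicate point is avoiding any trichotomy-style case split. The argument uses only the disjunction that comes from unfolding membership in the union defining $\alpha + \gamma$. The plumpness of $\gamma$ is used exactly to upgrade $\delta \subseteq \varepsilon \in \gamma$ to $\delta \in \gamma$, which is what fails for general ordinals. Injectivity of $F_\alpha$ then follows by applying the inclusion in both directions.
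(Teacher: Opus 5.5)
Your proof is correct and is essentially the paper's argument; the paper inducts on $\gamma$ (keeping $\beta$ universally quantified) rather than on $\beta$, which makes no difference since the recursive call shrinks both arguments. The only other difference is that the paper rules out the case $\alpha + \delta \in \alpha$ by using plumpness of $\alpha$ to derive $\alpha \in \alpha$, whereas your direct observation $\alpha + \delta \in \alpha \subseteq \alpha + \delta$ is slightly simpler and does not need plumpness.
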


\begin{proof}
  We show this by induction on $\gamma$. Consider any $\delta \in \beta$, then $\alpha + \delta \in \left(\alpha + \delta\right)^{\pl+} \subseteq \alpha + \beta$. By definition, $\alpha \subseteq \alpha + \delta$, thus $\alpha + \delta \in \alpha$ would imply $\alpha \in \alpha$ by plumpness, thus a contradiction. Therefore, we must have $\alpha + \delta \in \left(\alpha + \varepsilon\right)^{\pl+}$ for some $\varepsilon \in \gamma$. This means $\alpha + \delta \subseteq \alpha + \varepsilon$, and thus $\delta \subseteq \varepsilon$ by the inductive hypothesis. Since $\gamma \in \PlOrd$, we must have $\delta \in \gamma$.
\end{proof}

\begin{lemma}
  \label{lem:pl-mul-inj}
  For any $\alpha, \beta, \beta', \gamma \in \PlOrd$, if $\gamma \in \alpha$, then $\alpha \cdot \beta \subseteq \alpha \cdot \beta' + \gamma$ implies $\beta \subseteq \beta'$.
\end{lemma}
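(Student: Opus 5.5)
We prove the statement by set induction on $\beta'$, following the pattern of \autoref{lem:pl-add-inj}. Fix $\alpha, \gamma \in \PlOrd$ with $\gamma \in \alpha$, and assume $\alpha \cdot \beta \subseteq \alpha \cdot \beta' + \gamma$. The inductive hypothesis states that for every $\varepsilon \in \beta'$ and every $\gamma' \in \alpha$, the inclusion $\alpha \cdot \beta \subseteq \alpha \cdot \varepsilon + \gamma'$ implies $\beta \subseteq \varepsilon$; the hypothesis is taken uniformly in $\beta$ and in $\gamma'$.

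Take $\delta \in \beta$; we must show $\delta \in \beta'$. By definition of multiplication, $\alpha \cdot \delta + \alpha \subseteq \alpha \cdot \beta$. Since $\gamma \in \alpha$ and $\alpha \subseteq \alpha \cdot \delta + \alpha$, we have $\alpha \cdot \delta + \gamma \in \alpha \cdot \delta + \alpha$. One way to see this is that $\gamma \in \alpha$ gives $\alpha\cdot\delta+\gamma \in (\alpha\cdot\delta+\gamma)^\pls \subseteq \alpha\cdot\delta+\gamma^\pls \subseteq \alpha\cdot\delta+\alpha$. The last inclusion uses monotonicity of $+$ in the right argument, which follows directly from the defining union. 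It also uses $\gamma^\pls \subseteq \alpha$, which holds because any plump $\eta \subseteq \gamma$ lies in $\alpha$ by \autoref{prop:pl-ord-closed-under-pl-ord-subset}. Hence $x \coloneqq \alpha\cdot\delta+\gamma \in \alpha\cdot\beta' + \gamma$.

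Now unfold $\alpha\cdot\beta'+\gamma = \alpha\cdot\beta' \cup \bigcup_{\eta\in\gamma}(\alpha\cdot\beta'+\eta)^\pls$, which gives two cases.

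\emph{Case 1: $x \in (\alpha\cdot\beta'+\eta)^\pls$ for some $\eta\in\gamma$.} Then $\alpha\cdot\delta+\gamma \subseteq \alpha\cdot\beta'+\eta$. We compare $\alpha\cdot\delta$ with $\alpha\cdot\beta'$; the general comparison is a plump analogue of left cancellation. Taking $\gamma=\eta'$ pieces apart is awkward, so we would instead use the following device: from $\alpha\cdot\delta+\gamma\subseteq \alpha\cdot\beta'+\eta$ with $\eta\in\gamma$, we derive $\alpha \cdot \delta^\pls$-type bounds that contradict irreflexivity, in the same spirit as the $\alpha\in\alpha$ contradiction in \autoref{lem:pl-add-inj}. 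This may need a small auxiliary lemma: if $\alpha\cdot\delta + \gamma \subseteq \alpha\cdot\beta'+\eta$ with $\eta \in \gamma \in \alpha$, then $\alpha\cdot(\delta+1) \subseteq \alpha \cdot \beta' + \eta'$ for some $\eta' \in \alpha$, or else $\delta \in \beta'$ directly.

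\emph{Case 2: $x \in \alpha\cdot\beta'$.} Then $x \in \alpha\cdot\varepsilon+\alpha$ for some $\varepsilon \in \beta'$. Unfolding again, either $x \in \alpha\cdot\varepsilon$, which pushes down to a smaller $\varepsilon$ and is handled by an inner induction, or $x \in (\alpha\cdot\varepsilon+\gamma')^\pls$ for some $\gamma' \in \alpha$, i.e.\ $\alpha\cdot\delta+\gamma \subseteq \alpha\cdot\varepsilon+\gamma'$. From this, $\alpha\cdot\delta^{\pls} \subseteq \alpha\cdot\delta+\alpha$ would be too big, but $\alpha\cdot\delta \subseteq \alpha\cdot\varepsilon+\gamma'$ certainly holds. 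By the inductive hypothesis applied with $\beta := \delta$, we get $\delta \subseteq \varepsilon$. Since $\delta$ and $\beta'$ are plump and $\varepsilon \in \beta'$, \autoref{prop:pl-ord-closed-under-pl-ord-subset} gives $\delta \in \beta'$.

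The main obstacle is the cleanup of the case analysis on which component of the union $x$ lands in. For this reason I would strengthen the induction to the form above, quantifying over all $\gamma' \in \alpha$ and all $\beta$. Then every branch yields an inclusion $\alpha\cdot\delta \subseteq \alpha\cdot\varepsilon + \gamma'$ with $\varepsilon \in \beta'$ or $\varepsilon = \beta'$ with $\gamma'\in\gamma$. The latter branch (Case 1) must be refuted or reduced; the first attempt would be to iterate the same unfolding on $\alpha\cdot\delta \subseteq \alpha\cdot\beta'+\eta$, using set induction on $\gamma$ as a secondary induction.
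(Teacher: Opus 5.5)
\textbf{There is a genuine gap: your Case~1 is never handled.} This is the branch where $x = \alpha\cdot\delta+\gamma$ lands in $(\alpha\cdot\beta'+\eta)^{\pls}$ for some $\eta\in\gamma$, and none of the three remedies you sketch works.

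Your Case~2 is fine. It is exactly the paper's use of the outer induction on $\beta'$, taken uniformly in $\beta$ and in the additive term. Your sub-case $x\in\alpha\cdot\varepsilon$ can even be absorbed directly, since $x\subseteq\alpha\cdot\varepsilon\subseteq\alpha\cdot\varepsilon+\gamma$.

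\textbf{Why Case~1 cannot be refuted.} Since the lemma is true, $\delta\in\beta'$. Hence $x\in\alpha\cdot\delta+\alpha\subseteq\alpha\cdot\beta'$. Then $x\subseteq\alpha\cdot\beta'\subseteq\alpha\cdot\beta'+\eta$, and $x$ is plump, so $x\in(\alpha\cdot\beta'+\eta)^{\pls}$ for every $\eta\in\gamma$. So Case~1 occurs in every instance with $\gamma$ inhabited, and no irreflexivity argument in the style of \autoref{lem:pl-add-inj} can exclude it.

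\textbf{Why your other two remedies fail.}
\begin{itemize}
  \item The auxiliary lemma you propose is unproved. Its first disjunct, $\alpha\cdot(\delta+1)\subseteq\alpha\cdot\beta'+\eta'$, involves the same $\beta'$, so extracting $\delta\in\beta'$ from it would require the lemma at $\beta'$ itself. That is circular.
  \item A secondary induction on $\gamma$ applied to the inclusion $\alpha\cdot\delta\subseteq\alpha\cdot\beta'+\eta$ only returns $\delta\subseteq\beta'$. That is strictly weaker than the $\delta\in\beta'$ you need.
\end{itemize}

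\textbf{The missing idea.} Carry the \emph{membership} statement, not the inclusion, through an inner induction. For fixed $\delta$ and $\beta'$, prove by induction on $\gamma\in\alpha$ that
\[\alpha\cdot\delta+\gamma\in\alpha\cdot\beta'+\gamma \ \text{ implies } \ \delta\in\beta'.\]
In Case~1, $\eta\in\gamma$ gives
\[\alpha\cdot\delta+\eta\in(\alpha\cdot\delta+\eta)^{\pls}\subseteq\alpha\cdot\delta+\gamma\subseteq\alpha\cdot\beta'+\eta,\]
so $\alpha\cdot\delta+\eta\in\alpha\cdot\beta'+\eta$. This is the same statement at the smaller $\eta$, and $\eta\in\alpha$ by transitivity. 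The inner hypothesis then yields $\delta\in\beta'$.

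Case~2 is discharged by the outer hypothesis on $\beta'$ exactly as you did. This nested induction is precisely how the paper's proof is organised.
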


\begin{proof}
  We show this by induction on $\beta'$. Consider any $\delta \in \beta$, we know that $\alpha \cdot \delta + \gamma \in \left(\alpha \cdot \delta + \gamma\right)^{\pl+} \subseteq \alpha \cdot \delta + \alpha$. Thus $\alpha \cdot \delta + \gamma \in \alpha \cdot \beta \subseteq \alpha \cdot \beta' + \gamma$.

  Now, it suffices to show, by another induction on $\gamma$ here, that $\alpha \cdot \delta + \gamma \in \alpha \cdot \beta' + \gamma$ implies $\delta \in \beta'$ (for the fixed $\beta'$ above). By the definition of plump ordinal addition, we must have either $\alpha \cdot \delta + \gamma \in \alpha \cdot \beta'$, or $\alpha \cdot \delta + \gamma \in \left(\alpha \cdot \beta' + \gamma'\right)^{\pl+}$ for some $\gamma' \in \gamma$.

  In the first case, there must exist $\varepsilon \in \beta'$ such that $\alpha \cdot \delta + \gamma \in \alpha \cdot \varepsilon + \alpha$, i.e.\ there exists $\gamma' \in \alpha$ such that $\alpha \cdot \delta + \gamma \in \left(\alpha \cdot \varepsilon + \gamma'\right)^{\pl+}$ and hence $\alpha \cdot \delta \subseteq \alpha \cdot \delta + \gamma \subseteq \alpha \cdot \varepsilon + \gamma'$. By the inductive hypothesis of the outer induction, $\delta \subseteq \varepsilon$, thus $\delta \in \beta'$ by plumpness.

  In the second case, we must have $\alpha \cdot \delta + \gamma' \in \left(\alpha \cdot \delta + \gamma'\right)^{\pl+} \subseteq \alpha \cdot \delta + \gamma \subseteq \alpha \cdot \beta' + \gamma'$, and thus $\delta \in \beta'$ by the inductive hypothesis of the inner induction.
\end{proof}

\begin{corollary}
  \label{cor:pl-ord-code-pairs}
  For any $\alpha, \beta \in \PlOrd$, the function $f : \alpha \times \beta \rightarrow \alpha \cdot \beta$ given by $\tuple{\gamma, \delta} \mapsto \alpha \cdot \delta + \gamma$ is injective.
\end{corollary}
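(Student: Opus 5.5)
The plan is to check two things: first that $f$ really lands in $\alpha \cdot \beta$, and then that it is injective. Both steps reduce to the definitions of plump addition and multiplication together with \autoref{lem:pl-add-inj} and \autoref{lem:pl-mul-inj}. Throughout, whenever $\gamma \in \alpha$ and $\delta \in \beta$, \autoref{prop:pl-ord-class-trans} gives $\gamma, \delta \in \PlOrd$. The preceding lemma on closure of arithmetic then gives $\alpha \cdot \delta \in \PlOrd$ and $\alpha \cdot \delta + \gamma \in \PlOrd$. So all the earlier lemmas apply to these values.

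For well-definedness, fix $\gamma \in \alpha$ and $\delta \in \beta$. By definition of addition,
\[\alpha \cdot \delta + \gamma \in \left(\alpha \cdot \delta + \gamma\right)^{\pl+} \subseteq \alpha \cdot \delta + \alpha.\]
By definition of multiplication, $\alpha \cdot \delta + \alpha \subseteq \alpha \cdot \beta$ because $\delta \in \beta$. Hence $f\mleft(\gamma, \delta\mright) \in \alpha \cdot \beta$.

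For injectivity, suppose $\alpha \cdot \delta + \gamma = \alpha \cdot \delta' + \gamma'$ with $\gamma, \gamma' \in \alpha$ and $\delta, \delta' \in \beta$.
\begin{enumerate}
  \item The definition of addition gives $\alpha \cdot \delta \subseteq \alpha \cdot \delta + \gamma$, since the first summand is included in the union. Hence $\alpha \cdot \delta \subseteq \alpha \cdot \delta' + \gamma'$.
  \item Since $\gamma' \in \alpha$, \autoref{lem:pl-mul-inj} yields $\delta \subseteq \delta'$.
  \item The symmetric argument gives $\delta' \subseteq \delta$, so $\delta = \delta'$ by extensionality.
  \item We are left with $\alpha \cdot \delta + \gamma = \alpha \cdot \delta + \gamma'$, where $\alpha \cdot \delta \in \PlOrd$. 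Applying \autoref{lem:pl-add-inj} to each of the two inclusions gives $\gamma \subseteq \gamma'$ and $\gamma' \subseteq \gamma$, so $\gamma = \gamma'$.
\end{enumerate}

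I expect no serious obstacle, since the substantive work was done in \autoref{lem:pl-mul-inj}. The only points needing care are bookkeeping ones. One must confirm that every argument fed to the lemmas is plump, which is handled by the transitivity of $\PlOrd$ and the closure lemma. One must also note that injectivity is obtained intuitionistically, from two inclusions plus extensionality, without any appeal to trichotomy.
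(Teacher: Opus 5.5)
Your proposal is correct and matches the paper's proof essentially step for step. Well-definedness uses the same chain $\alpha \cdot \delta + \gamma \in (\alpha \cdot \delta + \gamma)^{\pl+} \subseteq \alpha \cdot \delta + \alpha \subseteq \alpha \cdot \beta$; injectivity applies \autoref{lem:pl-mul-inj} in both directions to get $\delta = \delta'$ and then \autoref{lem:pl-add-inj} to get $\gamma = \gamma'$, and your explicit plumpness checks via \autoref{prop:pl-ord-class-trans} and the closure lemma are bookkeeping the paper leaves implicit.
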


\begin{proof}
  We first check that $f$ is indeed well-defined on codomain $\alpha \cdot \beta$: for any $\gamma \in \alpha, \delta \in \beta$, we see that $\alpha \cdot \delta + \gamma \in \left(\alpha \cdot \delta + \gamma\right)^{\pl+} \subseteq \alpha \cdot \delta + \alpha \subseteq \alpha \cdot \beta$.

  Now, suppose that $f\mleft(\gamma, \delta\mright) = f\mleft(\gamma', \delta'\mright)$. Observe that we have $\alpha \cdot \delta \subseteq f\mleft(\gamma, \delta\mright) = \alpha \cdot \delta' + \gamma'$ and thus $\delta \subseteq \delta'$ by \autoref{lem:pl-mul-inj}. Likewise, we have $\delta' \subseteq \delta$ as well, i.e.\ $\delta = \delta'$. This implies $\alpha \cdot \delta = \alpha \cdot \delta'$ and hence furthermore $\gamma = \gamma'$ by \autoref{lem:pl-add-inj}.
\end{proof}

We now have the infrastructure to prove the following claim intuitionistically:

\begin{theorem}
  \label{thm:l-pl-ord-surj}
  There exists class functions $\Theta, I$ on domain $\PlOrd$, such that for any $\alpha \in \PlOrd$, we have $\Theta\mleft(\alpha\mright) \in \PlOrd$ and $I\mleft(\alpha\mright) \subseteq \Theta\mleft(\alpha\mright) \times L_\alpha$ is a surjection from a subset of $\Theta\mleft(\alpha\mright)$ onto $L_\alpha$.
\end{theorem}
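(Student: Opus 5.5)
We construct $\Theta$ and $I$ simultaneously by $\Sigma^\pl$-recursion on $\alpha \in \PlOrd$, which is available in $\mathrm{IKP} + \mathrm{PlB}\Sigma_1$ as explained at the start of \autoref{sec:plump-ord-arith}. Since $L_\alpha = \bigcup_{\beta \in \alpha} \dd\mleft(L_\beta\mright)$, the recursive step has three parts. First, for each $\beta \in \alpha$, turn the surjection $I\mleft(\beta\mright)$ from a subset of $\Theta\mleft(\beta\mright)$ onto $L_\beta$ into a partial surjection onto $\dd\mleft(L_\beta\mright)$. Second, glue these together over $\beta \in \alpha$. Third, code all the resulting pairs into a single plump ordinal using \autoref{cor:pl-ord-code-pairs}.

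For the first part, an element of $\dd\mleft(L_\beta\mright)$ is given by a formula code $n \in \omega$ together with a finite tuple of parameters from $L_\beta$. I would first fix a plump ordinal $\kappa_\beta$ that partially surjects onto finite sequences of elements of $\Theta\mleft(\beta\mright)$. The concrete choice is to take $\kappa_\beta = \sup_{k \in \omega} \Theta\mleft(\beta\mright)^{k}$, using the plump exponentiation on $\Theta\mleft(\beta\mright) \cdot \omega^\pl$, or more simply to iterate the pairing map $\tuple{\gamma, \delta} \mapsto \Theta\mleft(\beta\mright) \cdot \delta + \gamma$. Injectivity of the iterated pairing follows from \autoref{cor:pl-ord-code-pairs}, so every finite tuple of ordinals in $\Theta\mleft(\beta\mright)$ receives a distinct code. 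I then set
\[\Theta'\mleft(\beta\mright) = \kappa_\beta \cdot \omega^\pl,\]
and define $J\mleft(\beta\mright)$ to send the code $\kappa_\beta \cdot n + c$ to the set defined over $L_\beta$ by formula $n$ with the parameters $I\mleft(\beta\mright)$ applied to the tuple coded by $c$. The domain of $J\mleft(\beta\mright)$ is the set of codes where every component lies in $\operatorname{dom} I\mleft(\beta\mright)$. This domain is only a subset of $\Theta'\mleft(\beta\mright)$, which is exactly why the statement only asks for a surjection from a subset.

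For the second and third parts, I glue over $\beta \in \alpha$. Using $\mathrm{PlB}\Sigma_1$ together with \autoref{lem:pl-union-still-pl}, I take a plump bound $\mu = \bigcup_{\beta \in \alpha} \Theta'\mleft(\beta\mright)$ and set
\[\Theta\mleft(\alpha\mright) = \mu \cdot \alpha.\]
I then let $I\mleft(\alpha\mright)$ send $\mu \cdot \beta + \xi$ to $J\mleft(\beta\mright)\mleft(\xi\mright)$ for $\beta \in \alpha$ and $\xi \in \operatorname{dom} J\mleft(\beta\mright) \subseteq \mu$. \autoref{cor:pl-ord-code-pairs} guarantees that each code determines $\beta$ and $\xi$ uniquely, so $I\mleft(\alpha\mright)$ is a function. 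Surjectivity onto $L_\alpha$ follows directly from the definition of $L_\alpha$. The whole construction is $\Sigma^\pl$: $\dd$ is $\Delta_1$ and the arithmetic operations are $\Sigma^\pl$, so the recursion goes through.

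I expect the main obstacle to be the intuitionistic issue that "being in the domain" is not decidable. In particular, $\operatorname{dom} I\mleft(\beta\mright)$ need not be a plump ordinal or even a detachable subset. Because of this, defining a function on codes must avoid case splits. My approach is to define $I\mleft(\alpha\mright)$ as a relation by $\Delta_0$-separation from $\Theta\mleft(\alpha\mright) \times L_\alpha$, and to argue functionality purely from the injectivity of the coding in \autoref{cor:pl-ord-code-pairs} and \autoref{lem:pl-add-inj}. A secondary technical point is handling finite tuples uniformly in $\omega$. Since $\omega$ need not be plump, I would use $\omega^\pl$, or simply fix a bound on the arity by encoding tuples as iterated pairs via a $\Sigma$-recursion on $\omega$.
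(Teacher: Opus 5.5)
Your proposal is correct and is essentially the paper's construction: a simultaneous $\Sigma^\pl$-recursion that indexes each element of $L_\alpha$ by a mixed-radix plump code of $\beta \in \alpha$, the G\"odel number of the formula (via $\omega^\pl$) and the codes of the parameters, with functionality of $I\mleft(\alpha\mright)$ obtained from repeated use of \autoref{cor:pl-ord-code-pairs}; your nested codes $\mu \cdot \beta + \left(\kappa_\beta \cdot n^\pl + c\right)$ differ from the paper's $\sum_{i = 1}^n \alpha \cdot \omega^\pl \cdot \gamma^{n - i} \cdot \varepsilon_i + \alpha \cdot q^\pl + \beta$ only in the order of the digits and in using a $\beta$-dependent base. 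One small correction: tuples of different lengths can share a code (e.g.\ $\tuple{\varepsilon}$ and $\tuple{\varepsilon, \varnothing}$, since $\Theta\mleft(\beta\mright) \cdot \varnothing + \varepsilon = \varepsilon$), so your claim that every finite tuple gets a distinct code is false; you only need injectivity for each fixed length, and the length is determined by the formula code, just as $q$ determines the arity in the paper.
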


\begin{proof}
  We define $\Theta$ and $I$ simultaneously by $\Sigma^\pl$-recursion. Fix some $\alpha \in \PlOrd$, denote $\gamma = \bigcup_{\beta \in \alpha} \Theta\mleft(\beta\mright)$ and let
  \[\Theta\mleft(\alpha\mright) = \alpha \cdot \omega^\pl \cdot \gamma^\omega.\]
  Then $I\mleft(\alpha\mright)$ will precisely contain those $\tuple{\delta, x} \in \Theta\mleft(\alpha\mright) \times L_\alpha$ such that there exists $\beta \in \alpha$, formula $\varphi$ and $\varepsilon_1, \ldots, \varepsilon_n \in \mathrm{dom}\mleft(I\mleft(\beta\mright)\mright) \subseteq \Theta\mleft(\beta\mright)$ such that
  \begin{itemize}
    \item $\delta = \sum_{i = 1}^n \alpha \cdot \omega^\pl \cdot \gamma^{n - i} \cdot \varepsilon_i + \alpha \cdot q^\pl + \beta$, where $q \in \omega$ is the G\"odel numbering of $\varphi$;
    \item $x = \left\{z \in L_\beta : \varphi\mleft(z, I\mleft(\beta\mright)\mleft(\varepsilon_1\mright), \ldots, I\mleft(\beta\mright)\mleft(\varepsilon_n\mright)\mright)\right\} \in \dd\mleft(L_\beta\mright)$.
  \end{itemize}
  Repeated applications of \autoref{cor:pl-ord-code-pairs} shows immediately that if $\tuple{\delta, x}, \tuple{\delta', x'} \in I\mleft(\alpha\mright)$ with $\delta = \delta'$, then $x$ and $x'$ are defined through exactly the same parameters, thus $I\mleft(\alpha\mright)$ is indeed a well-defined function. The other desired properties are all straightforward to verify.
\end{proof}

\subsection{Strong incomparables and coding}
\label{subsec:incom-code}

We say that two plump ordinals $\alpha, \beta \in \PlOrd$ are \emph{strongly incomparable}
\footnote{The adjective ``strongly'' is to distinguish the notion from that of weak incomparability, i.e.\ $\neg \alpha \in \beta^+ \land \neg \beta \in \alpha^+$, which is useful in other contexts (c.f.\ \cite{wang-sw25-antichain-pre}). Only strong incomparability will be relevant to the discussion of this exposition.} if neither is a subset of the other, i.e.
\[\neg \alpha \subseteq \beta \land \neg \beta \subseteq \alpha,\]
which is equivalent to saying $\alpha \not\in \beta^\pls \land \beta \not\in \alpha^\pls$. Moreover, let $f : s \rightarrow \PlOrd$ be an arbitrary function on some domain $s$, we say that $f$ is \emph{pairwise (strongly) incomparable} if the following holds:
\[\forall \gamma, \gamma' \in s \left(f\mleft(\gamma'\mright) \subseteq f\mleft(\gamma\mright) \rightarrow \gamma = \gamma'\right).\]

Now, arithmetic properties in the previous section provide us a convenient way to encode a pairwise incomparable function on plump ordinals. More specifically, we show that:

\begin{proposition}
  \label{prop:incom-func-as-pl-ord}
  Any pairwise incomparable function on plump ordinals can be encoded by a single plump ordinal. In other words, for any $\alpha, \beta \in \PlOrd$, if $f : s \rightarrow \beta$ is pairwise incomparable on domain $s \subseteq \alpha$, then there exists some $\sigma \in \PlOrd$ with a formula $\varphi\mleft(x, y; \alpha, \beta, \sigma\mright)$ such that
  \[\left(\gamma \in s \land f\mleft(\gamma\mright) = \delta\right) \leftrightarrow \varphi\mleft(\gamma, \delta; \alpha, \beta, \sigma\mright).\]
\end{proposition}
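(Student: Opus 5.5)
The plan is to encode the graph of $f$ as a set of plump ordinals via the pairing map of \autoref{cor:pl-ord-code-pairs}, and then take the union of the plump successors of these codes. Concretely, I set
\[\sigma = \bigcup_{\gamma \in s} \left(\alpha \cdot f\mleft(\gamma\mright) + \gamma\right)^{\pl+},\]
which exists by $\Delta_0$-replacement together with $\mathrm{PlUb}$ (available via \autoref{lem:delta-0-bounding-implies-plub}). It is plump by \autoref{lem:pl-successor-still-pl} and \autoref{lem:pl-union-still-pl}. Thus $\sigma$ is the downward plump closure of the codes $c_\gamma = \alpha \cdot f\mleft(\gamma\mright) + \gamma$: an element of $\sigma$ is a plump ordinal contained in some $c_\gamma$. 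The formula $\varphi\mleft(\gamma, \delta; \alpha, \beta, \sigma\mright)$ will say that $\gamma \in \alpha$, $\delta \in \beta$, $\alpha \cdot \delta + \gamma \in \sigma$, and $\alpha \cdot \delta + \gamma$ is maximal in $\sigma$ in the sense
\[\forall \gamma' \in \alpha \ \forall \delta' \in \beta \left(\alpha\cdot\delta+\gamma \subseteq \alpha\cdot\delta'+\gamma' \land \alpha\cdot\delta'+\gamma' \in \sigma \rightarrow \gamma = \gamma' \land \delta = \delta'\right).\]
This formula is defined using the $\Sigma^\pl$-recursive arithmetic, which is permissible since the statement only asks for some formula.

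For the forward direction, take $\gamma \in s$ with $f\mleft(\gamma\mright) = \delta$. Then $c_\gamma \in \sigma$ directly. For maximality, suppose $c_\gamma \subseteq \alpha \cdot \delta' + \gamma' \in \sigma$. Then $\alpha\cdot\delta'+\gamma' \subseteq c_{\gamma''}$ for some $\gamma'' \in s$. So $\alpha \cdot f\mleft(\gamma\mright) \subseteq c_{\gamma''}$, and \autoref{lem:pl-mul-inj} gives $f\mleft(\gamma\mright) \subseteq f\mleft(\gamma''\mright)$. Pairwise incomparability then yields $\gamma = \gamma''$. Hence $c_\gamma \subseteq \alpha\cdot\delta'+\gamma' \subseteq c_\gamma$, so the two codes are equal, and injectivity in \autoref{cor:pl-ord-code-pairs} gives $\gamma' = \gamma$ and $\delta' = \delta$.

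For the backward direction, assume $\varphi\mleft(\gamma,\delta\right)$. Since $\alpha\cdot\delta+\gamma \in \sigma$, there is $\gamma'' \in s$ with $\alpha\cdot\delta+\gamma \subseteq c_{\gamma''}$. Now $c_{\gamma''}$ is itself of the form $\alpha\cdot\delta'+\gamma'$ with $\gamma' = \gamma'' \in \alpha$ and $\delta' = f\mleft(\gamma''\mright) \in \beta$, and it lies in $\sigma$. Maximality therefore forces $\gamma = \gamma''$ and $\delta = f\mleft(\gamma''\mright)$, so $\gamma \in s$ and $f\mleft(\gamma\mright) = \delta$.

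The main obstacle is that membership $\gamma \in s$ is not decidable, so the encoding cannot simply record $s$. The maximality clause is what recovers $s$: every element of $\sigma$ sits below some genuine code, and genuine codes are the only maximal ones. This relies on incomparability transported through \autoref{lem:pl-mul-inj}, which needs $\gamma'' \in \alpha$; that holds because $s \subseteq \alpha$. I also need to check that $c_\gamma \in c_\gamma^{\pl+}$ with everything plump.
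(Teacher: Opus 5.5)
Your proof is correct and is essentially the paper's: the same $\sigma = \bigcup_{\gamma \in s} \left(\alpha \cdot f\mleft(\gamma\mright) + \gamma\right)^{\pls}$, the same maximality device in $\varphi$, and the same appeals to \autoref{lem:pl-mul-inj}, pairwise incomparability and \autoref{cor:pl-ord-code-pairs}. The only difference is cosmetic. Your maximality clause ranges over codes $\alpha \cdot \delta' + \gamma'$ with $\gamma' \in \alpha$, $\delta' \in \beta$ and concludes equality of coordinates, so you use injectivity of the pairing in the forward direction rather than the backward one. (A minor aside: the existence of $\sigma$ really rests on $\Sigma^\pl$-replacement via $\mathrm{PlB}\Sigma_1$, since the map $\gamma \mapsto \left(\alpha \cdot f\mleft(\gamma\mright) + \gamma\right)^{\pls}$ is not $\Delta_0$.)
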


\begin{proof}
  We take
  \[\sigma = \bigcup_{\gamma \in s} \left(\alpha \cdot f\mleft(\gamma\mright) + \gamma\right)^\pls,\]
  and define $\varphi\mleft(x, y; \alpha, \beta, \sigma\mright)$ as the formula
  \[x \in \alpha \land y \in \beta \land \exists \tau \in \sigma \left(\tau = \alpha \cdot y + x \land \forall \tau' \in \sigma \left(\tau \subseteq \tau' \rightarrow \tau = \tau'\right)\right).\]

  For the forward direction, we check for any $\gamma \in s$ that indeed $\alpha \cdot f\mleft(\gamma\mright) + \gamma \in \sigma$. Additionally, for any $\tau \in \sigma$, there must exist $\gamma' \in s$ such that $\tau \subseteq \alpha \cdot f\mleft(\gamma'\mright) + \gamma'$. Now, if $\alpha \cdot f\mleft(\gamma\mright) + \gamma \subseteq \tau$, i.e.
  \[\alpha \cdot f\mleft(\gamma\mright) \subseteq \alpha \cdot f\mleft(\gamma\mright) + \gamma \subseteq \tau \subseteq \alpha \cdot f\mleft(\gamma'\mright) + \gamma',\]
  then by \autoref{lem:pl-mul-inj}, we must also have $f\mleft(\gamma\mright) \subseteq f\mleft(\gamma'\mright)$, which by pairwise incomparability implies $\gamma = \gamma'$. It then follows trivially that $\tau = \alpha \cdot f\mleft(\gamma\mright) + \gamma$.

  For the backward direction, assume that $\varphi\mleft(\gamma, \delta; \alpha, \beta, \sigma\mright)$ holds. Then we must have $\alpha \cdot \delta + \gamma \in \sigma$, i.e.\ $\alpha \cdot \delta + \gamma \subseteq \alpha \cdot f\mleft(\gamma'\mright) + \gamma'$ for some $\gamma' \in s$, which also implies $\alpha \cdot \delta + \gamma = \alpha \cdot f\mleft(\gamma'\mright) + \gamma'$ by stipulation in $\varphi$. Since also $\gamma, \delta \in \PlOrd$ and $\gamma \in \alpha$, it follows from \autoref{cor:pl-ord-code-pairs} that $\delta = f\mleft(\gamma'\mright)$ and $\gamma = \gamma'$. Thus we have $\gamma \in s \land \delta = f\mleft(\gamma\mright)$ as desired.
\end{proof}

Since plump ordinals are always in $L_\pl$ by \autoref{lem:uniform-def-pl-ord-in-l}, we can show the following result:

\begin{theorem}
  \label{thm:l-pl-code-power}
  Suppose that $s \subseteq \PlOrd$ and $f : s \rightarrow \PlOrd$ is pairwise incomparable, then $\mathcal{P}\mleft(s\mright) \subseteq L_\pl$.
\end{theorem}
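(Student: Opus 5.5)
The plan is to code each subset $t \subseteq s$ by a single plump ordinal, using \autoref{prop:incom-func-as-pl-ord}, and then recover $t$ inside $L_\pl$ by $\Delta_0$-separation. Throughout I work in $\mathrm{IKP} + \mathrm{PlB}\Sigma_1$, as in the rest of \autoref{sec:plump-ord-arith}.

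\emph{Step 1: bounds and coding.} First I bound $s$ and the range of $f$ by plump ordinals. Put $\alpha = \bigcup_{\gamma \in s} \gamma^\pls$ and $\beta = \bigcup_{\gamma \in s} f\mleft(\gamma\mright)^\pls$. These exist by $\mathrm{PlUb}$ (via \autoref{lem:delta-0-bounding-implies-plub}) and $\Sigma$-replacement. They are plump by \autoref{lem:pl-successor-still-pl} and \autoref{lem:pl-union-still-pl}, and they satisfy $s \subseteq \alpha$ and $f : s \rightarrow \beta$. Now fix $t \subseteq s$. The restriction $f \restriction t$ is clearly still pairwise incomparable, so \autoref{prop:incom-func-as-pl-ord} yields $\sigma_t \in \PlOrd$ such that
\[\gamma \in t \leftrightarrow \exists \delta \in \beta \ \exists \tau \in \sigma_t \left(\tau = \alpha \cdot \delta + \gamma \land \forall \tau' \in \sigma_t \left(\tau \subseteq \tau' \rightarrow \tau = \tau'\right)\right)\]
for all $\gamma \in \alpha$.

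\emph{Step 2: removing the arithmetic term.} The term $\alpha \cdot \delta + \gamma$ is not $\Delta_0$. To fix this, I introduce the set $g = \left\{\tuple{\gamma, \delta, \alpha \cdot \delta + \gamma} : \gamma \in \alpha, \delta \in \beta\right\}$, which exists in $V$ by $\Sigma^\pl$-replacement. Replacing ``$\tau = \alpha \cdot \delta + \gamma$'' with ``$\tuple{\gamma, \delta, \tau} \in g$'' gives a $\Delta_0$-definition of $t$ as a subset of $\alpha$, with parameters $\alpha, \beta, \sigma_t, g$.

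\emph{Step 3: the parameters lie in $L_\pl$.} By \autoref{lem:uniform-def-pl-ord-in-l} and $\mathrm{PlUb}$, all plump ordinals lie in $L_\pl$, so $\alpha, \beta, \sigma_t \in L_\pl$. If also $g \in L_\pl$, then $\Delta_0$-separation in $L_\pl$ (\autoref{prop:l-pl-ikp-minus-coll}) produces a set there. By $\Delta_0$-absoluteness for transitive classes, that set is exactly $t$, giving $t \in L_\pl$.

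\emph{Step 4, the main obstacle: showing $g \in L_\pl$.} By \autoref{cor:l-pl-ikp-inner-model}, $L_\pl \vDash \mathrm{IKP} + \mathrm{PlB}\Sigma_1$, so $L_\pl$ can compute its own $g^{L_\pl}$ via its own $\Sigma^\pl$-recursion for $+$ and $\cdot$. I then need $g^{L_\pl} = g$, i.e.\ that plump arithmetic is absolute between $L_\pl$ and $V$. I would prove this by induction along the recursive definitions. The unions are absolute, so the only non-trivial clause is the plump successor. For $\alpha' \in \PlOrd$ we have $(\alpha'^\pls)^{L_\pl} = \mathcal{P}\mleft(\alpha'\mright) \cap \PlOrd^{L_\pl} \cap L_\pl$. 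This equals $\mathcal{P}\mleft(\alpha'\mright) \cap \PlOrd = \alpha'^\pls$, by \autoref{cor:l-preserve-pl-ord} together with $\PlOrd \subseteq L_\pl$. The equality of successors propagates through the recursion, so $+$ and $\cdot$ agree on plump arguments, hence $g \in L_\pl$.

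I expect this absoluteness check to be the delicate part. If it proves awkward, a fallback is to use the $\Delta_0$ characterisation $\relpl_{\alpha'}$ of plump subsets directly when verifying that the successor step agrees.
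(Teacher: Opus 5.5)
Your proposal is correct and follows essentially the paper's route. You code by a single plump ordinal via \autoref{prop:incom-func-as-pl-ord}, use \autoref{lem:uniform-def-pl-ord-in-l} to put that ordinal in $L_\pl$, and recover the subset by $\Delta_0$-separation inside $L_\pl$. There are two small differences. First, you apply the proposition afresh to each restriction $f \upharpoonright t$ and read $t$ off as its domain. The paper instead codes $f$ once, codes $t$ by $\sigma_t = \bigcup_{\gamma \in t} f\mleft(\gamma\mright)^\pls$, and recovers $t = \left\{\gamma \in s : f\mleft(\gamma\mright) \in \sigma_t\right\}$ by a second direct use of incomparability. Second, your Step 4 explicitly checks that plump arithmetic is absolute between $L_\pl$ and $V$, which the paper leaves implicit in its parenthetical remark about functions defined by $\Sigma^\pl$-recursion.
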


\begin{proof}
  Let $\alpha = \bigcup_{\beta \in s} \beta^\pls$ so that $s \subseteq \alpha$ and let $x \subseteq s$ be any subset. We consider
  \[\sigma_x = \bigcup_{\beta \in x} f\mleft(\beta\mright)^{\pl+},\]
  then $\sigma_x$ is a plump ordinal, thus $\sigma_x \in L_\pl$ by \autoref{lem:uniform-def-pl-ord-in-l}. We can then define the set
  \[s_x = \left\{\beta \in \alpha : \beta \in s \land f\mleft(\beta\mright) \in \sigma_x\right\},\]
  which is a set in $L_\pl$ because the condition $\beta \in s \land f\mleft(\beta\mright) \in \sigma_x$ can be written as $\exists \gamma \in \sigma_x \ \varphi\mleft(\beta, \gamma\mright)$ through the $\Delta_0$-formula $\varphi\mleft(x, y\mright)$ (except containing functions defined through $\Sigma^\pl$-recursion) with plump parameters given in \autoref{prop:incom-func-as-pl-ord}. Obviously, for any $\beta \in x$, we have $f\mleft(\beta\mright) \in \sigma_x$, thus $\beta \in s_x$. On the other hand, for any $\beta \in s_x$ we have $\beta \in s$ and some $\gamma \in x$ such that $f\mleft(\beta\mright) \in f\mleft(\gamma\mright)^{\pl+}$, i.e.\ $f\mleft(\beta\mright) \subseteq f\mleft(\gamma\mright)$, thus $\beta = \gamma \in x$ by incomparability. Therefore, we indeed have $x = s_x \in L_\pl$.
\end{proof}

\begin{corollary}
  \label{cor:many-pl-incom-v-eq-l-pl}
  Suppose that for any $\alpha \in \PlOrd$, there exists a pairwise incomparable function $f : \alpha \rightarrow \PlOrd$, then we have $V = L_\pl$.
\end{corollary}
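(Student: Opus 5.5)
We prove $V = L_\pl$ by set induction: for every set $x$ we show $x \in L_\pl$. The idea is to inject each set into some plump ordinal and then use \autoref{thm:l-pl-code-power} to move subsets of that ordinal into $L_\pl$.

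Fix $x$ and assume as inductive hypothesis that every $y \in x$ lies in $L_\pl$. For each $y \in x$ there is some $\alpha \in \PlOrd$ with $y \in L_\alpha$. By $\mathrm{PlB}\Sigma_1$, using that $L_\alpha$ is $\Sigma_1$-definable, we obtain a single $\alpha \in \PlOrd$ such that every $y \in x$ lies in some $L_\beta$ with $\beta \in \alpha$. Hence $x \subseteq L_\alpha$.

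Now apply \autoref{thm:l-pl-ord-surj} to get a surjection $I(\alpha)$ from a subset $d \subseteq \Theta(\alpha)$ onto $L_\alpha$, where $\Theta(\alpha) \in \PlOrd$. Let
\[s = \left\{\delta \in d : I(\alpha)(\delta) \in x\right\},\]
which is a set by $\Delta_0$-separation. Then $s \subseteq \Theta(\alpha) \subseteq \PlOrd$, using \autoref{prop:pl-ord-class-trans}, and $x = I(\alpha)[s]$.

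By hypothesis there is a pairwise incomparable $f : \Theta(\alpha) \to \PlOrd$. Its restriction $f \restriction s$ is still pairwise incomparable, so \autoref{thm:l-pl-code-power} gives $\mathcal{P}(s) \subseteq L_\pl$. In particular $s \in L_\pl$.

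It remains to recover $x$ inside $L_\pl$. Both $I(\alpha)$ and $L_\alpha$ are defined by $\Sigma^\pl$-recursion on plump ordinals. We must check that they are absolute for $L_\pl$ and belong to it. For absoluteness, $L_\pl \vDash \mathrm{IKP} + \mathrm{PlB}\Sigma_1$ by \autoref{cor:l-pl-ikp-inner-model}, and $\PlOrd^{L_\pl} = \PlOrd$ by \autoref{cor:l-preserve-pl-ord}. So the same recursions can be carried out inside $L_\pl$. Since the constructible hierarchy relativised to $L_\pl$ is the real $L_\beta$ for plump $\beta$, the defining clauses of $\Theta$ and $I$ are $\Sigma^\pl$ and hence upward absolute. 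It follows that $I(\alpha)^{L_\pl} = I(\alpha)$, so $I(\alpha) \in L_\pl$.

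Once $I(\alpha)$ and $s$ both lie in $L_\pl$, $\Delta_0$-replacement inside $L_\pl$ produces the image $I(\alpha)[s] = x$, so $x \in L_\pl$. Replacement is available there by \autoref{prop:l-pl-ikp-minus-coll} together with collection. This completes the induction.

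The main obstacle is the absoluteness step, namely that $I(\alpha) \in L_\pl$. The hypothesis $\mathrm{PlB}\Sigma_1$ is needed in two places: to bound the levels $L_\beta$ when showing $x \subseteq L_\alpha$, and to make the $\Sigma^\pl$-recursion go through in $L_\pl$. A fallback, if the absoluteness argument is awkward, is to observe that $I(\alpha)$ is itself a set of pairs of elements already shown to lie in $L_\pl$, and to run a second induction on $\alpha$ showing $I(\alpha) \in L_{\alpha'}$ for some larger plump $\alpha'$.
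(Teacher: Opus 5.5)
Your proposal is correct and follows the paper's proof essentially step for step: set induction, $\mathrm{PlB}\Sigma_1$ to get $x \subseteq L_\alpha$, pulling $x$ back along $I(\alpha)$ to a subset of $\Theta(\alpha)$ that lies in $L_\pl$ by \autoref{thm:l-pl-code-power}, and then recomputing $I(\alpha)$ inside $L_\pl$ by $\Sigma^\pl$-recursion to recover $x$ as an image. The differences are cosmetic: you restrict the incomparable function to $s$ before invoking the theorem, whereas the paper applies it to all of $\Theta(\alpha)$. You also spell out the absoluteness of $I(\alpha)$ that the paper leaves implicit, so your fallback argument is not needed.
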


\begin{proof}
  By set induction, it suffices to show that for any set $x \subseteq L_\pl$, we also have $x \in L_\pl$. Now, by $\mathrm{PlB}\Sigma_1$, we can find some $\alpha \in \PlOrd$ such that $x \subseteq L_\alpha$. Consider the plump ordinal $\Theta\mleft(\alpha\mright)$ given in \autoref{thm:l-pl-ord-surj}, then \autoref{thm:l-pl-code-power} implies that $\mathcal{P}\mleft(\Theta\mleft(\alpha\mright)\mright) \subseteq L_\pl$. Specifically, the following set
  \[\sigma = \left\{\beta \in \Theta\mleft(\alpha\mright) : \exists y \in x \tuple{\beta, y} \in I\mleft(\alpha\mright)\right\} \in L_\pl.\]

  Now, we work inside $L_\pl$, which is a model of $\mathrm{IKP} + \mathrm{PlB}\Sigma_1$ by \autoref{cor:l-pl-ikp-inner-model}, and can construct the same $I\mleft(\alpha\mright)$ through $\Sigma^\pl$-recursion. It follows that
  \[s = \left\{I\mleft(\alpha\mright)\mleft(\beta\mright) : \beta \in \sigma\right\} \in L_\pl.\]
  For any $y \in x$, there exists $\beta \in \Theta\mleft(\alpha\mright)$ such that $I\mleft(\alpha\mright)\mleft(\beta\mright) = y$ by stipulations in \autoref{thm:l-pl-ord-surj} and $\beta \in \sigma$ by its definition, thus $x \subseteq s$. On the other hand, since $I\mleft(\alpha\mright)$ is a function, for any $\beta \in \sigma$, $I\mleft(\alpha\mright)\mleft(\beta\mright) \in x$ by definition of $\sigma$. Therefore, indeed $x = s \in L_\pl$.
\end{proof}

\section{An application in forcing models}

\autoref{thm:powerset-plump-in-l} and \autoref{cor:many-pl-incom-v-eq-l-pl} are one example how in various occasions related to $L$, plump ordinals and their arithmetic can be more useful than their counterparts on the full class $\Ord$ of ordinals. For comparison, the incomparable phenomenon in \autoref{subsec:incom-code} can manifest among arbitrary (non-plump) ordinals with much less effort (c.f.\ \cite{wang-sw25-antichain-pre}), but since whether $\Ord \subseteq L$ remains an open problem in intuitionistic set theories, it can be difficult to relate definability with ordinal parameters to definability in $L$.

In this final section, we will see a more concrete example where we utilise \autoref{lem:uniform-def-pl-ord-in-l} and an explicit coding function as considered in \autoref{subsec:incom-code} to automatically encode large swathes of ``non-constructive sets'' into $L$ in an intuitionistic context. We shall work in the stronger, classical theory $\mathrm{ZFC}$ and put an arbitrary classical set $x$ (and all its subsets) inside $L$ through Heyting forcing.

\subsection{Heyting forcing on generalised Cantor space}

Working in $\mathrm{ZFC}$, let $\kappa$ be any infinite cardinal and consider the generalised Cantor topology on $2^\kappa$ generated by basic open sets
\[U_f = \left\{g \in 2^\kappa : f \subseteq g\right\}\]
for any $f : \alpha \rightarrow 2$ with $\alpha \in \kappa$. Following Bell \cite{bell05-boolean-valued-models}*{Chapter 0}, the open sets of this topology form a complete Heyting algebra, which we shall denote $\mathbb{H}_\kappa$.

For any $\alpha, \beta \in \kappa$ and $f : \alpha \rightarrow 2$, we pick an element in the space $2^\kappa$ to encode the ordered pair $\tuple{\beta, f}$; we denote it as $f \oplus \beta : \kappa \rightarrow 2$, given by
\[\mleft(f \oplus \beta\mright)\mleft(\gamma\mright) = \left\{\begin{aligned}
     & f\mleft(\gamma\mright) &  & \text{if $\gamma \in \alpha$},                                \\
     & 1                      &  & \text{if $\gamma = \alpha$ or $\gamma = \alpha + 1 + \beta$}, \\
     & 0                      &  & \text{otherwise}.
  \end{aligned}\right.\]
For any $\alpha \in \kappa$, we denote
\[D_\alpha = \left\{f \oplus \alpha : \beta \in \kappa, f : \beta \rightarrow 2\right\} \subseteq 2^\kappa,\]
which is clearly dense in $2^\kappa$. Observe that:
\begin{lemma}
  \label{lem:d-alpha-disjoint}
  For any $\alpha_1, \alpha_2, \beta_1, \beta_2 \in \kappa$ and $f_1 : \alpha_1 \rightarrow 2$, $f_2 : \alpha_2 \rightarrow 2$, we have
  \[f_1 \oplus \beta_1 = f_2 \oplus \beta_2 \rightarrow \left(\beta_1 = \beta_2 \land f_1 = f_2\right).\]
  In other words, the dense sets $D_\alpha$ are disjoint for different $\alpha \in \kappa$.
\end{lemma}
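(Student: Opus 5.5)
The plan is to recover $\alpha_i$, $f_i$ and $\beta_i$ from the function $g = f_i \oplus \beta_i \in 2^\kappa$, read off from its values. We work classically in $\mathrm{ZFC}$, so ordinals are linearly ordered and we may split into cases freely.

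The key observation is that $\alpha_i$ is determined by $g$, but not as its first $1$, since $f_i$ may already take value $1$. Instead, $\alpha_i$ is characterised as the \emph{second-to-last} point where $g$ takes value $1$. By definition, $g$ restricted to ordinals $\geq \alpha_i$ equals $1$ exactly at $\alpha_i$ and at $\alpha_i + 1 + \beta_i$. Moreover $\alpha_i < \alpha_i + 1 + \beta_i < \kappa$: here $\alpha_i + 1 + \beta_i \in \kappa$ because $\kappa$ is an infinite cardinal, hence a limit ordinal closed under ordinal addition of smaller ordinals. So the set $g^{-1}(1)$ has a maximum element, namely $\mu = \alpha_i + 1 + \beta_i$. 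Its largest element below $\mu$ is $\alpha_i$, because there are no $1$'s strictly between $\alpha_i$ and $\mu$.

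With this, the steps are as follows.
\begin{enumerate}
  \item Suppose $g = f_1 \oplus \beta_1 = f_2 \oplus \beta_2$.
  \item Let $\mu = \max g^{-1}(1)$. Then $\alpha_1 + 1 + \beta_1 = \mu = \alpha_2 + 1 + \beta_2$.
  \item Let $\nu = \max\left(g^{-1}(1) \cap \mu\right)$. Then $\alpha_1 = \nu = \alpha_2$.
  \item Then $f_1 = g|_{\alpha_1} = g|_{\alpha_2} = f_2$.
  \item Finally, $\alpha_1 + 1 + \beta_1 = \alpha_1 + 1 + \beta_2$ gives $\beta_1 = \beta_2$ by left cancellation of ordinal addition.
\end{enumerate}

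If one prefers to avoid the max argument, the same conclusion follows by a direct case split. Assume WLOG $\alpha_1 < \alpha_2$ and derive a contradiction. Both of $\alpha_1$ and $\alpha_1 + 1 + \beta_1$ are positions where $f_1 \oplus \beta_1$ takes value $1$. Each must then be either $< \alpha_2$ or equal to $\alpha_2$ or $\alpha_2 + 1 + \beta_2$. But beyond $\alpha_1$, the function $f_1 \oplus \beta_1$ has only one further $1$, while beyond $\alpha_1$ the function $f_2 \oplus \beta_2$ has at least two $1$'s, namely $\alpha_2$ and $\alpha_2 + 1 + \beta_2$. This contradicts $g$ being a single function.

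For the disjointness of the $D_\alpha$, observe that an element of $D_{\alpha}$ is $f \oplus \alpha$, so the second coordinate is $\alpha$. The injectivity just proved then forces elements of $D_{\alpha}$ and $D_{\alpha'}$ to coincide only if $\alpha = \alpha'$.

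The only genuine obstacle is the point that the first $1$ of $g$ need not be $\alpha_i$. The fix is to use the last two $1$'s, whose existence rests on $\kappa$ being a limit ordinal closed under ordinal addition, so that $\alpha_i + 1 + \beta_i \in \kappa$.
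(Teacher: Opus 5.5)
Your proposal is correct and follows the paper's own argument: identify $\alpha_i + 1 + \beta_i$ and $\alpha_i$ as the last and second-to-last positions where $f_i \oplus \beta_i$ takes value $1$, read off $f_i$ as the restriction to $\alpha_i$, and cancel to get $\beta_1 = \beta_2$. Your explicit check that $\alpha_i + 1 + \beta_i \in \kappa$, because $\kappa$ is an infinite cardinal, fills in a detail the paper leaves implicit.
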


\begin{proof}
  By definition of $\oplus$, the last $\gamma \in \kappa$ such that $\mleft(f_1 \oplus \beta_1\mright)\mleft(\gamma\mright) = 1$ is $\gamma = \alpha_1 + 1 + \beta_1$ and the second last is $\gamma = \alpha_1$. The same holds for $f_2 \oplus \beta_2$, thus we know that
  \[\alpha_1 = \alpha_2 \quad \text{and} \quad \alpha_1 + 1 + \beta_1 = \alpha_2 + 1 + \beta_2.\]
  It follows that $\beta_1 = \beta_2$ and also $f_1 = f_2$ immediately.
\end{proof}

Using choice, we can fix an ordinal $\lambda$ and a bijection $\rho : \lambda \rightarrow 2^{<\kappa}$ into the set of all functions $f : \alpha \rightarrow 2$, $\alpha \in \kappa$ that respects the well-ordering by inclusion on the domains of these functions, i.e.\ for any $\alpha, \beta \in \lambda$,
\[\alpha \in \beta \rightarrow \mathrm{dom}\mleft(\rho\mleft(\alpha\mright)\mright) \subseteq \mathrm{dom}\mleft(\rho\mleft(\beta\mright)\mright).\]
For any $\alpha \in \kappa$, let $\rho \oplus \alpha$ denote the map $\beta \mapsto \rho\mleft(\beta\mright) \oplus \alpha$, which is a bijection $\lambda \rightarrow D_\alpha$. We observe that:
\begin{lemma}
  \label{lem:d-alpha-initial-segments-closed}
  For any $\alpha \in \kappa$, $\beta \in \lambda$, the image
  \[\mleft(\rho \oplus \alpha\mright)``\beta = \left\{\rho\mleft(\gamma\mright) \oplus \alpha : \gamma \in \beta\right\}\]
  is closed.
\end{lemma}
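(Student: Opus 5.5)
We show the complement is open: given $g \in 2^\kappa$ outside $(\rho\oplus\alpha)``\beta$, we find a basic open neighbourhood $U_h \ni g$ disjoint from it. Classical logic is available since we work in $\mathrm{ZFC}$.

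The key observation uses the ordering of $\rho$. Every $\gamma \in \beta$ has $\mathrm{dom}(\rho(\gamma)) \subseteq \mathrm{dom}(\rho(\beta))$, with $\rho(\beta)$ defined because $\beta \in \lambda$. Put $\mu = \mathrm{dom}(\rho(\beta)) \in \kappa$. Then each element $\rho(\gamma)\oplus\alpha$ of the image, with $\rho(\gamma) : \alpha_\gamma \rightarrow 2$ and $\alpha_\gamma \le \mu$, takes value $1$ exactly at the points of $\mathrm{dom}(\rho(\gamma))$ where $\rho(\gamma)$ is $1$, at $\alpha_\gamma$, and at $\alpha_\gamma + 1 + \alpha$. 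So it is determined by its restriction to $\eta = \mu + 1 + \alpha + 1$. Since $\kappa$ is an infinite cardinal and $\mu, \alpha \in \kappa$, we have $\eta \in \kappa$. Moreover, every element of the image is identically $0$ on $[\eta, \kappa)$.

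There are now two cases for $g \notin (\rho\oplus\alpha)``\beta$.

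\textbf{Case 1: $g$ takes value $1$ at some $\xi \geq \eta$.} Take $h = g \restriction (\xi+1)$. Then $U_h$ contains $g$ and avoids the image, because every element of the image is $0$ at $\xi$.

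\textbf{Case 2: $g$ is zero on $[\eta,\kappa)$.} Take $h = g\restriction \eta$. If some $\rho(\gamma)\oplus\alpha$ lay in $U_h$, it would agree with $g$ on $\eta$. Both are zero beyond $\eta$, so they would be equal, giving $g$ in the image, a contradiction.

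In both cases $U_h$ is a basic open neighbourhood of $g$ disjoint from the image, so the complement is open and the image is closed.

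The only delicate step is confirming $\eta \in \kappa$, i.e.\ that the tail $\alpha_\gamma + 1 + \alpha$ stays uniformly below $\kappa$. This follows because ordinal addition of ordinals below an infinite cardinal stays below it. An alternative route is to note that the image is the set of limit points of a family of sets bounded in support, but the direct neighbourhood argument above is cleaner.
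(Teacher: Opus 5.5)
Your proof is correct and is essentially the paper's argument. The paper likewise sets $\sigma = \mathrm{dom}(\rho(\beta)) + 1 + \alpha$ (your $\eta$ is $\sigma + 1$), notes that every element of the image vanishes above $\sigma$, and splits on whether $g$ has a $1$ above $\sigma$, using the neighbourhoods $U_{g \upharpoonright (\xi+1)}$ and $U_{g \upharpoonright (\sigma+1)}$ exactly as you do; your explicit check that $\eta \in \kappa$, so that these are genuine basic open sets, fills in a detail the paper leaves implicit.
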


\begin{proof}
  Let $\sigma = \mathrm{dom}\mleft(\rho\mleft(\beta\mright)\mright) + 1 + \alpha$. By our stipulation of $\rho$, for any $\gamma \in \beta$ we have $\mathrm{dom}\mleft(\rho\mleft(\gamma\mright)\mright) \subseteq \mathrm{dom}\mleft(\rho\mleft(\beta\mright)\mright)$, thus for any $f \in \mleft(\rho \oplus \alpha\mright)``\beta$ it follows that $f\mleft(\gamma\mright) = 0$ for any $\gamma > \sigma$. It suffices to show that any $S \subseteq 2^\kappa$ with this property is closed.

  We consider some arbitrary $g \in 2^\kappa \setminus S$; there are two cases: either there exists some $\gamma > \sigma$ such that $g\mleft(\gamma\mright) = 1$, or there is no such $\gamma$. In the first case, clearly $g \in U_{g \upharpoonright \left(\gamma + 1\right)}$ and $U_{g \upharpoonright \left(\gamma + 1\right)} \cap S = \varnothing$. In the second case, since $g \not\in S$, for every $f \in S$ there must be some $\gamma \leq \sigma$ such that $f\mleft(\gamma\mright) \neq g\mleft(\gamma\mright)$. Thus $g \in U_{g \upharpoonright \left(\sigma + 1\right)}$ and $U_{g \upharpoonright \left(\sigma + 1\right)} \cap S = \varnothing$. Therefore $S$ is closed.
\end{proof}

Now, as in Bell \cite{bell05-boolean-valued-models}*{Chapter 8}, we consider Heyting forcing over the class of names
\[V^{\mathbb{H}_\kappa}_\alpha = \left\{f : \text{$f$ is a partial function} \bigcup_{\beta \in \alpha} V^{\mathbb{H}_\kappa}_\beta \rightharpoonup \mathbb{H}_\kappa\right\},\]
and finally $V^{\mathbb{H}_\kappa} = \bigcup_{\alpha \in \Ord} V^{\mathbb{H}_\kappa}_\alpha$. The valuation of a formula is denoted as $\left\llbracket\varphi\right\rrbracket \in \mathbb{H}_\kappa$ (whose definition is standard and we will conveniently omit here) and we write $V^{\mathbb{H}_\kappa} \vDash \varphi$ when
\[\left\llbracket\varphi\right\rrbracket = \mathbf{1}_{\mathbb{H}_\kappa} = 2^\kappa.\]
Naturally, $V^{\mathbb{H}_\kappa} \vDash \mathrm{IZF}$.

By \autoref{lem:d-alpha-initial-segments-closed}, we can consider the names
\[\dot{\lambda}_\alpha = \left\{\tuple{\check{\beta}, 2^\kappa \setminus \mleft(\rho \oplus \alpha\mright)``\beta} : \beta \in \lambda\right\}\]
for any $\alpha \in \kappa$. We verify that
\begin{lemma}
  For any $\alpha \in \kappa$, $V^{\mathbb{H}_\kappa} \vDash \dot{\lambda}_\alpha \in \Ord \land \dot{\lambda}_\alpha \subseteq \check{\lambda}$. Also for any $\alpha \neq \beta \in \kappa$,
  \[V^{\mathbb{H}_\kappa} \vDash \neg \dot{\lambda}_\alpha \subseteq \dot{\lambda}_\beta.\]
\end{lemma}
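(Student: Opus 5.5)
Write $A_\beta = 2^\kappa \setminus \mleft(\rho \oplus \alpha\mright)``\beta$ for the truth value of $\check{\beta} \in \dot{\lambda}_\alpha$; it is open by \autoref{lem:d-alpha-initial-segments-closed}. The plan is to compute Boolean values directly from this name and use the standard identities $\llbracket \check{x} \in \check{y}\rrbracket = \mathbf{1}$ or $\mathbf{0}$ and $\llbracket \varphi(\check{x})\rrbracket = \mathbf{1}$ for $\Delta_0$ facts holding in $V$. The key monotonicity observation is that $\gamma \in \beta$ implies $A_\beta \subseteq A_\gamma$, since $\mleft(\rho\oplus\alpha\mright)``\gamma \subseteq \mleft(\rho\oplus\alpha\mright)``\beta$.

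\emph{$\dot{\lambda}_\alpha \subseteq \check{\lambda}$.} Every element named in $\dot{\lambda}_\alpha$ is some $\check{\beta}$ with $\beta \in \lambda$, so $\llbracket \check{\beta} \in \check{\lambda}\rrbracket = \mathbf{1}$. Hence $\llbracket \forall x \in \dot{\lambda}_\alpha\, x \in \check{\lambda}\rrbracket = \bigwedge_\beta (A_\beta \Rightarrow \mathbf{1}) = \mathbf{1}$.

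\emph{$\dot{\lambda}_\alpha \in \Ord$.} Each element is forced to be $\check{\beta}$, which is forced to be an ordinal because ordinals are $\Delta_0$-definable and check names preserve $\Delta_0$ truths. So the remaining task is transitivity, i.e.\ $\llbracket \forall y\in\dot{\lambda}_\alpha\, \forall z \in y\, z \in \dot{\lambda}_\alpha\rrbracket = \mathbf{1}$. Unwinding, we need
\[A_\beta \wedge \llbracket \check{\gamma} \in \check{\beta}\rrbracket \leq \llbracket \check{\gamma} \in \dot{\lambda}_\alpha\rrbracket\]
for all $\beta,\gamma$. The left side is $\mathbf{0}$ unless $\gamma \in \beta$, and then $A_\beta \subseteq A_\gamma \leq \llbracket \check{\gamma}\in\dot{\lambda}_\alpha\rrbracket$ by monotonicity.

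\emph{Incomparability.} Fix $\alpha \neq \alpha'$. We must show $\llbracket \dot{\lambda}_\alpha \subseteq \dot{\lambda}_{\alpha'}\rrbracket = \bigwedge_\beta (A_\beta \Rightarrow \llbracket\check{\beta}\in\dot{\lambda}_{\alpha'}\rrbracket)$ has empty interior-meet, i.e.\ is $\mathbf{0}$; then its negation is $\mathbf{1}$. Since $\llbracket \check{\beta} \in \dot{\lambda}_{\alpha'}\rrbracket = A'_\beta$, the value is at most $\bigcap_\beta \mathrm{int}\mleft((2^\kappa\setminus A_\beta)\cup A'_\beta\mright)$.

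Suppose a basic open $U_f$ lies below it. By density pick $g = \rho(\beta)\oplus\alpha' \in U_f \cap D_{\alpha'}$, and take $\beta+1$. Then $g \in \mleft(\rho\oplus\alpha'\mright)``(\beta+1)$, so $g \notin A'_{\beta+1}$. By \autoref{lem:d-alpha-disjoint}, $g \notin D_\alpha$, so $g \in A_{\beta+1}$. Thus $g \in A_{\beta+1}\setminus A'_{\beta+1}$, contradicting $U_f \subseteq (2^\kappa\setminus A_{\beta+1})\cup A'_{\beta+1}$. Here we need $\beta+1 \in \lambda$; since $\lambda$ is a limit (it has size $2^{<\kappa}$ and respects domains, so we may assume it is a limit), this holds. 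Otherwise use any $\beta' > \beta$ in $\lambda$, or handle the top element separately.

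The main obstacle is bookkeeping the Heyting-valued interpretations of bounded quantifiers over names. In particular, the value of $\check{\beta}\in\dot{\lambda}_\alpha$ is exactly $A_\beta$, using $\llbracket \check{\beta}=\check{\gamma}\rrbracket\in\{\mathbf{0},\mathbf{1}\}$, and pseudo-complements must be taken as interiors. The incomparability step is where the disjointness of the $D_\alpha$ is essential.
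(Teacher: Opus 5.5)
Your proof is correct and takes essentially the paper's route. The subset and transitivity parts are the same Heyting-value computations, using $(\rho\oplus\alpha)``\gamma \subseteq (\rho\oplus\alpha)``\beta$ for $\gamma\in\beta$. For incomparability you argue pointwise with a point of the dense set $D_{\alpha'}$, whereas the paper uses \autoref{lem:d-alpha-disjoint} to reduce each implication to $2^\kappa\setminus(\rho\oplus\alpha')``\gamma$ and obtains the meet $(2^\kappa\setminus D_{\alpha'})^\circ=\mathbf{0}_{\mathbb{H}_\kappa}$; both rest on disjointness plus density.

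Your hedging about $\lambda$ is unnecessary, because $\lambda$ is forced to be a limit ordinal. Since $\kappa$ is a limit, some function has domain $\mathrm{dom}(\rho(\beta))+1$, and the domain-respecting property of $\rho$ puts its index above $\beta$. The paper tacitly uses the same fact when it identifies the union of the images with the whole dense set.
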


\begin{proof}
  It is trivial to check that $\left\llbracket \dot{\lambda}_\alpha \subseteq \check{\lambda} \right\rrbracket = \mathbf{1}_{\mathbb{H}_\kappa}$ and it follows immediate that $V^{\mathbb{H}_\kappa} \vDash \forall \beta \in \dot{\lambda}_\alpha \ \text{$\beta$ is transitive}$. It remains to evaluate that
  \begin{align*}
    \left\llbracket \forall \beta \in \dot{\lambda}_\alpha \ \forall \gamma \in \beta \ \gamma \in \dot{\lambda}_\alpha \right\rrbracket & = \bigwedge_{\beta \in \lambda} \left(2^\kappa \setminus \mleft(\rho \oplus \alpha\mright)``\beta \Rightarrow \bigwedge_{\gamma \in \beta} \left\llbracket \check{\gamma} \in \dot{\lambda}_\alpha \right\rrbracket\right)                                    \\
                                                                                                                                         & = \bigwedge_{\beta \in \lambda} \left(2^\kappa \setminus \mleft(\rho \oplus \alpha\mright)``\beta \Rightarrow \bigwedge_{\gamma \in \beta} \left(2^\kappa \setminus \mleft(\rho \oplus \alpha\mright)``\gamma\right)\right) = \mathbf{1}_{\mathbb{H}_\kappa},
  \end{align*}
  since $\mleft(\rho \oplus \alpha\mright)``\gamma \subseteq \mleft(\rho \oplus \alpha\mright)``\beta$ for $\gamma \in \beta$.

  On the other hand, for any $\alpha \neq \beta \in \kappa$, we have
  \begin{align*}
    \left\llbracket \dot{\lambda}_\alpha \subseteq \dot{\lambda}_\beta \right\rrbracket & = \bigwedge_{\gamma \in \lambda} \left(2^\kappa \setminus \mleft(\rho \oplus \alpha\mright)``\gamma \Rightarrow 2^\kappa \setminus \mleft(\rho \oplus \beta\mright)``\gamma\right)          \\
                                                                                        & = \bigwedge_{\gamma \in \lambda} \left(2^\kappa \setminus \mleft(\rho \oplus \beta\mright)``\gamma\right) = \left(2^\kappa \setminus D_\beta\right)^\circ = \mathbf{0}_{\mathbb{H}_\kappa},
  \end{align*}
  where we use the fact that $\mleft(\rho \oplus \alpha\mright)``\gamma$ and $\mleft(\rho \oplus \beta\mright)``\gamma$ are disjoint by \autoref{lem:d-alpha-disjoint}, thus $2^\kappa \setminus \mleft(\rho \oplus \alpha\mright)``\gamma \Rightarrow 2^\kappa \setminus \mleft(\rho \oplus \beta\mright)``\gamma = 2^\kappa \setminus \mleft(\rho \oplus \beta\mright)``\gamma$.
\end{proof}

Finally, we combine these into the canonical name
\[\dot{\Lambda} = \left\{\tuple{\check{\alpha}, \dot{\lambda}_\alpha}^\bullet : \alpha \in \kappa\right\}^\bullet,\]
such that $V^{\mathbb{H}_\kappa}$ thinks that $\dot{\Lambda}$ is a function $\check{\kappa} \rightarrow \Ord$ that maps each $\check{\alpha}$ to $\dot{\lambda}_\alpha$. It follows easily that
\begin{corollary}
  \label{cor:thin-incom-coding-func}
  $V^{\mathbb{H}_\kappa} \vDash \forall \alpha, \beta \in \check{\kappa} \left(\dot{\Lambda}\mleft(\alpha\mright) \subseteq \dot{\Lambda}\mleft(\beta\mright) \rightarrow \alpha = \beta\right)$.
\end{corollary}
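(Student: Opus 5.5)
We reduce the internal statement to a Boolean-value computation using the standard evaluation of bounded quantifiers over canonical names. Unfolding the definition of $\dot{\Lambda}$ as a set of internal ordered pairs $\tuple{\check{\alpha}, \dot{\lambda}_\alpha}^\bullet$, we have $V^{\mathbb{H}_\kappa} \vDash \dot{\Lambda}\mleft(\check{\alpha}\mright) = \dot{\lambda}_\alpha$ for each $\alpha \in \kappa$. This is the usual fact that the name $\left\{\tuple{\check{\alpha}, \dot{\lambda}_\alpha}^\bullet : \alpha \in \kappa\right\}^\bullet$ is forced to be a function with domain $\check{\kappa}$. Its key ingredient is that $\left\llbracket \check{\alpha} = \check{\beta} \right\rrbracket$ is $\mathbf{1}$ or $\mathbf{0}$ according to whether $\alpha = \beta$. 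Since every element of $\check{\kappa}$ is of the form $\check{\alpha}$ with value $\mathbf{1}$, the value of the statement becomes
\[\bigwedge_{\alpha, \beta \in \kappa} \left(\left\llbracket \dot{\lambda}_\alpha \subseteq \dot{\lambda}_\beta \right\rrbracket \Rightarrow \left\llbracket \check{\alpha} = \check{\beta} \right\rrbracket\right),\]
after substituting $\dot{\Lambda}\mleft(\check{\alpha}\mright)$ by $\dot{\lambda}_\alpha$ using the extensionality and equality axioms valid in $V^{\mathbb{H}_\kappa}$.

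We then split into two cases. If $\alpha = \beta$, the consequent $\left\llbracket \check{\alpha} = \check{\alpha} \right\rrbracket$ equals $\mathbf{1}$, so the implication is $\mathbf{1}$. If $\alpha \neq \beta$, the preceding lemma gives $V^{\mathbb{H}_\kappa} \vDash \neg \dot{\lambda}_\alpha \subseteq \dot{\lambda}_\beta$. Its proof in fact shows $\left\llbracket \dot{\lambda}_\alpha \subseteq \dot{\lambda}_\beta \right\rrbracket = \mathbf{0}_{\mathbb{H}_\kappa}$, so the implication $\mathbf{0} \Rightarrow x$ is again $\mathbf{1}$. The meet of all these values is therefore $\mathbf{1}_{\mathbb{H}_\kappa}$, which is the claim.

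The main point needing care is the first step: justifying, inside the Heyting-valued model, that $\dot{\Lambda}$ is a function and that $\dot{\Lambda}\mleft(\check{\alpha}\mright) = \dot{\lambda}_\alpha$ holds with value $\mathbf{1}$. We handle this by computing $\left\llbracket \tuple{\check{\alpha}, y} \in \dot{\Lambda} \right\rrbracket = \bigvee_{\alpha'} \left\llbracket \check{\alpha} = \check{\alpha}' \right\rrbracket \wedge \left\llbracket y = \dot{\lambda}_{\alpha'} \right\rrbracket = \left\llbracket y = \dot{\lambda}_\alpha \right\rrbracket$. This uses the decidability of equality between check names, together with the valid fact that internal pairs are equal exactly when their components are. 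Everything after that is the routine case split above.
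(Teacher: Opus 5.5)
Your proposal is correct and follows the same route the paper intends with its remark that the corollary ``follows easily'': identify $\dot{\Lambda}(\check{\alpha})$ with $\dot{\lambda}_\alpha$ with value $\mathbf{1}$, then split externally on whether $\alpha = \beta$, using the decidability of $\llbracket \check{\alpha} = \check{\beta} \rrbracket$ when $\alpha = \beta$ and the preceding lemma when $\alpha \neq \beta$. You do not need to reach into that lemma's proof, because in any Heyting algebra $\llbracket \neg\varphi \rrbracket = \mathbf{1}$ already forces $\llbracket \varphi \rrbracket = \mathbf{0}$.
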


\subsection{Thin ordinals and incomparable coding}

In the process above we built specific forcing names and it's natural there to work with classical ordinals such as $\check{\lambda}$ and their subsets, but these are apparently not plump. However, we may say that an ordinal $\alpha \in \Ord$ is \emph{thin} if for any $\beta, \gamma \in \alpha$,
\[\gamma \subseteq \beta \rightarrow \left(\gamma \in \beta \lor \gamma = \beta\right).\]
It is easy to verify that the forcing model $V^{\mathbb{H}_\kappa}$ recognises any classical $\check{\lambda}$ is thin. In other words, \autoref{cor:thin-incom-coding-func} concerns a function $\dot{\Lambda} : \check{\kappa} \rightarrow \Ord \cap \mathcal{P}\mleft(\check{\lambda}\mright)$ for some thin ordinals $\check{\kappa}, \check{\lambda} \in \Ord$ in $V^{\mathbb{H}_\kappa}$.

From here onwards, we will reason entirely within $V^{\mathbb{H}_\kappa}$ as a model of $\mathrm{IZF}$ to manipulate this distinguished set $\dot{\Lambda}$. We first observe that
\begin{lemma}[$\mathrm{IKP} + \mathrm{PlUb}$]
  \label{lem:pl-subset-retract}
  Let $\alpha \in \Ord$ be thin and $\beta, \gamma \in \mathcal{P}\mleft(\alpha\mright) \cap \Ord$. Then
  \[\gamma \subseteq \beta \leftrightarrow \gamma^\pl \subseteq \beta^\pl.\]
\end{lemma}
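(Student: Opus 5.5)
The plan is to prove the forward direction by monotonicity of $\left(-\right)^\pl$ and the backward direction by set induction on $\gamma$. The induction hypothesis is quantified over all $\beta \in \mathcal{P}\mleft(\alpha\mright) \cap \Ord$. Thinness of $\alpha$ is only needed at the single point where an inclusion between elements of $\alpha$ has to be upgraded to membership.

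\emph{Forward direction.} Suppose $\gamma \subseteq \beta$. Then the union $\gamma^\pl = \bigcup_{\delta \in \gamma} \left(\delta^\pl\right)^\pls$ ranges over a subfamily of the union defining $\beta^\pl$. Hence $\gamma^\pl \subseteq \beta^\pl$ directly from the definition. This direction uses neither thinness nor induction.

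\emph{Backward direction.} We prove by set induction on $\gamma$ that for every $\beta \in \mathcal{P}\mleft(\alpha\mright) \cap \Ord$, the inclusion $\gamma^\pl \subseteq \beta^\pl$ implies $\gamma \subseteq \beta$. The argument runs in four steps.
\begin{enumerate}
  \item Fix $\delta \in \gamma$. Since $\delta^\pl$ is a subset of itself, $\delta^\pl \in \left(\delta^\pl\right)^\pls \subseteq \gamma^\pl \subseteq \beta^\pl$.
  \item By the definition of $\beta^\pl$, there is some $\varepsilon \in \beta$ with $\delta^\pl \in \left(\varepsilon^\pl\right)^\pls$, that is, $\delta^\pl \subseteq \varepsilon^\pl$.
  \item We have $\delta \in \gamma \subseteq \alpha$ and $\varepsilon \in \beta \subseteq \alpha$. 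As $\alpha$ is transitive, $\delta, \varepsilon \in \mathcal{P}\mleft(\alpha\mright) \cap \Ord$. So the inductive hypothesis for $\delta \in \gamma$, applied with $\varepsilon$ in place of $\beta$, gives $\delta \subseteq \varepsilon$.
  \item Now $\delta, \varepsilon \in \alpha$ and $\delta \subseteq \varepsilon$, so thinness of $\alpha$ yields $\delta \in \varepsilon \lor \delta = \varepsilon$. In the first case $\delta \in \beta$ because $\beta$ is transitive; in the second, $\delta = \varepsilon \in \beta$. Either way $\delta \in \beta$, hence $\gamma \subseteq \beta$.
\end{enumerate}

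The main point of care is step 3: the inductive statement must be quantified over all $\beta$ (not a fixed one), and we must check that $\delta$ and $\varepsilon$ remain in the range $\mathcal{P}\mleft(\alpha\mright) \cap \Ord$. Transitivity of $\alpha$ guarantees exactly this. The remaining verifications only need $\mathrm{PlUb}$, so that $\left(-\right)^\pl$ and plump successors exist, together with the fact that $x \in y^\pls$ is equivalent to $x \subseteq y$ for plump $x$ and $y$.
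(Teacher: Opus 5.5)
Your proof is correct and follows the paper's argument essentially step for step. The forward direction is monotonicity. For the converse, you pass from $\delta \in \gamma$ to $\delta^\pl \subseteq \varepsilon^\pl$ for some $\varepsilon \in \beta$, apply the inductive hypothesis to get $\delta \subseteq \varepsilon$, and use thinness of $\alpha$ to conclude $\delta \in \beta$. The only difference is cosmetic: you induct on $\gamma$ with $\beta$ universally quantified, whereas the paper calls it a simultaneous induction on $\beta, \gamma$, and your phrasing is arguably the cleaner statement of the same induction.
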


\begin{proof}
  The forward direction is trivial from the definition of $\left(-\right)^\pl$; for the backward direction, we prove by simultaneous set induction on the parameters $\beta, \gamma$. Assume that $\gamma^\pl \subseteq \beta^\pl$ and take any $\delta \in \gamma$. We have $\delta^\pl \in \left(\delta^\pl\right)^\pls \subseteq \gamma^\pl$ and hence $\delta^\pl \in \beta^\pl$. In other words, there is some $\varepsilon \in \beta$ such that $\delta^\pl \in \left(\varepsilon^\pl\right)^\pls$, i.e.\ $\delta^\pl \subseteq \varepsilon^\pl$. Now, we still have $\delta, \varepsilon \subseteq \alpha$ by transitivity, so the inductive hypothesis implies $\delta \subseteq \varepsilon$, and by thinness of $\alpha$ we further have $\delta \in \varepsilon \lor \delta = \varepsilon$. Thus $\delta \in \beta$ as desired.
\end{proof}

We define $\dot{\Lambda}^\pl = \left\{\tuple{\alpha^\pl, \beta^\pl} : \tuple{\alpha, \beta} \in \dot{\Lambda}\right\}$. Observe that for any $\tuple{\alpha, \beta} \in \dot{\Lambda}$, we have $\alpha \subseteq \check{\kappa}$ and $\beta \subseteq \check{\lambda}$ for thin ordinals $\check{\kappa}, \check{\lambda} \in \Ord$. Thus by \autoref{lem:pl-subset-retract} we first have
\[\alpha^\pl = \beta^\pl \ \ \text{implies} \ \ \alpha = \beta, \ \ \text{implies} \ \ \gamma = \delta, \ \ \text{implies} \ \ \gamma^\pl = \delta^\pl\]
for any $\tuple{\alpha, \gamma}, \tuple{\beta, \delta} \in \dot{\Lambda}$, i.e.\ $\dot{\Lambda}^\pl$ is a well-defined function on some domain $s = \left\{\alpha^\pl : \alpha \in \check{\kappa}\right\}$. Combining \autoref{lem:pl-subset-retract} with \autoref{cor:thin-incom-coding-func} we also have
\[\dot{\Lambda}^\pl\mleft(\alpha^\pl\mright) \subseteq \dot{\Lambda}^\pl\mleft(\beta^\pl\mright) \ \ \text{implies} \ \ \dot{\Lambda}\mleft(\alpha\mright) \subseteq \dot{\Lambda}\mleft(\beta\mright), \ \ \text{implies} \ \ \alpha = \beta, \ \ \text{implies} \ \ \alpha^\pl = \beta^\pl\]
for any $\alpha, \beta \in \check{\kappa}$, i.e.\ $\dot{\Lambda}^\pl : s \rightarrow \PlOrd$ is a pairwise incomparable function.

\begin{theorem}
  \label{thm:powerset-plump-in-l}
  $V^{\mathbb{H}_\kappa} \vDash \mathcal{P}\mleft(s\mright) \in L_\pl$ where the set $s = \left\{\alpha^\pl : \alpha \in \check{\kappa}\right\}$.
\end{theorem}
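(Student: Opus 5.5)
We will argue entirely inside $V^{\mathbb{H}_\kappa}$, regarded as a model of $\mathrm{IZF}$, and reduce the statement to \autoref{thm:l-pl-code-power}. First we record that the ambient theory of \autoref{sec:plump-ord-arith} is available. $\mathrm{IZF}$ proves $\mathrm{IKP}(\mathcal{P})$, hence $\mathrm{PlB}\Sigma_1$, as observed after \autoref{cor:l-pl-ikp-inner-model}. By \autoref{lem:delta-0-bounding-implies-plub} we also have $\mathrm{PlUb}$. So the plump operator $(-)^\pl$, plump arithmetic, and all results of \autoref{sec:plump-ord-arith} can be used inside the forcing model. In particular $s = \{\alpha^\pl : \alpha \in \check{\kappa}\}$ is a set by replacement, and $s \subseteq \PlOrd$.

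Next we use the function $\dot{\Lambda}^\pl : s \rightarrow \PlOrd$ constructed just before the theorem. The discussion preceding the statement, which combines \autoref{lem:pl-subset-retract} (using thinness of $\check{\kappa}$ and $\check{\lambda}$ in $V^{\mathbb{H}_\kappa}$) with \autoref{cor:thin-incom-coding-func}, already shows two things: $\dot{\Lambda}^\pl$ is a well-defined function, and it is pairwise incomparable. Its values $\dot{\Lambda}(\alpha)^\pl$ are plump, since $(-)^\pl$ maps $\Ord$ into $\PlOrd$. The hypotheses of \autoref{thm:l-pl-code-power} are therefore met with $f = \dot{\Lambda}^\pl$, and we obtain $\mathcal{P}(s) \subseteq L_\pl$. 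The power set $\mathcal{P}(s)$ exists as a set because we are in $\mathrm{IZF}$.

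It remains to upgrade $\mathcal{P}(s) \subseteq L_\pl$ to $\mathcal{P}(s) \in L_\pl$. Since $\mathcal{P}(s)$ is a subset of $L_\pl$, we apply $\mathrm{PlB}\Sigma_1$ to the $\Sigma_1$ statement $\forall x \in \mathcal{P}(s)\, \exists \alpha \in \PlOrd\; x \in L_\alpha$. This yields a single plump $\alpha$ with $\mathcal{P}(s) \subseteq L_\alpha$. The set $s$ itself lies in $L_\pl$, either as an element of $\mathcal{P}(s)$ or by \autoref{lem:uniform-def-pl-ord-in-l} together with bounding; enlarging $\alpha$ via \autoref{lem:pl-union-still-pl}, we may assume $s \in L_\alpha$.

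The main obstacle is the separation step. We want $\mathcal{P}(s) = \{x \in L_\alpha : x \subseteq s\}$, which is a $\Delta_0$-definable subset of $L_\alpha$ with parameter $s$. It therefore lies in $\dd(L_\alpha) \subseteq L_{\alpha^\pls}$, and $L_{\alpha^\pls} \subseteq L_\pl$ by \autoref{lem:pl-successor-still-pl}. This is exactly the argument pattern used in \autoref{cor:many-pl-incom-v-eq-l-pl} and in \autoref{prop:l-pl-ikp-minus-coll}. The one delicate point is the equality: the inclusion from right to left is trivial, and from left to right each $x \subseteq s$ lies in $L_\alpha$ by the choice of $\alpha$. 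This establishes $V^{\mathbb{H}_\kappa} \vDash \mathcal{P}(s) \in L_\pl$.
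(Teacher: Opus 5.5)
Your proposal is correct and takes the same route as the paper, whose proof is simply to apply \autoref{thm:l-pl-code-power} to the pairwise incomparable function $\dot{\Lambda}^\pl : s \rightarrow \PlOrd$ inside $V^{\mathbb{H}_\kappa}$. Your extra step, which upgrades $\mathcal{P}(s) \subseteq L_\pl$ to $\mathcal{P}(s) \in L_\pl$ via $\mathrm{PlB}\Sigma_1$ and then $\Delta_0$-separation over a single $L_\alpha$, is sound; it spells out a detail the paper leaves implicit here and later attributes to collection.
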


\begin{proof}
  This follows immediately from \autoref{thm:l-pl-code-power} on $\dot{\Lambda}^\pl$.
\end{proof}

We can now proceed with the main target of this example, i.e.\ showing that some $V^{\mathbb{H}_\kappa} \vDash \mathcal{P}\mleft(\check{x}\mright) \in L_\pl$ for an arbitrary, fixed $x$:

\begin{lemma}[$\mathrm{ZFC}$]
  \label{lem:arith-pl-commute}
  For any $\alpha, \beta \in \Ord$, let $\gamma = \alpha + \beta$ and $\delta = \alpha \cdot \beta$ as in classical ordinal arithmetic\footnote{Technically speaking, in a classical set theory we have $\PlOrd = \Ord$ and plump ordinal arithmetic coincides with usual ordinal arithmetic. Hence we still abide by the stipulations in \autoref{sec:plump-ord-arith} and are not abusing notations here.}, then
  \[V^{\mathbb{H}_\kappa} \vDash \check{\gamma}^\pl = \check{\alpha}^\pl + \check{\beta}^\pl \quad \text{and} \quad \check{\delta}^\pl = \check{\alpha}^\pl \cdot \check{\beta}^\pl\]
  for any infinite cardinal $\kappa$.
\end{lemma}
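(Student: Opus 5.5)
We argue by classical transfinite induction on $\beta$ in $V$ (with $\alpha$ fixed), first for addition and then for multiplication. In each case we compute the valuation of the relevant equality in $V^{\mathbb{H}_\kappa}$ using only how check names interact with unions and with the recursive definitions. The key tool is a ``check-transfer'' fact: for any ordinal $\beta$ and any family of names $\dot{y}_\gamma$ ($\gamma\in\beta$) of sets, $V^{\mathbb{H}_\kappa} \vDash \bigcup_{\gamma \in \check{\beta}} \dot{F}(\gamma) = \dot{u}$ whenever $\dot{u}$ is the name for $\bigcup_{\gamma\in\beta}\dot{y}_\gamma$ and $V^{\mathbb{H}_\kappa}\vDash \dot{F}(\check{\gamma})=\dot{y}_\gamma$ for each $\gamma\in\beta$. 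This holds because $\llbracket \exists \gamma \in \check{\beta}\ \varphi(\gamma)\rrbracket = \bigvee_{\gamma\in\beta}\llbracket\varphi(\check{\gamma})\rrbracket$. We also use that $V^{\mathbb{H}_\kappa}\vDash \check{\gamma}\in\check{\beta} \leftrightarrow$ ($\gamma\in\beta$ holds in $V$), so bounded quantification over $\check{\beta}$ reduces to meets and joins over $\beta$.

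For addition, fix $\alpha$ and induct on $\beta$. Inside $V^{\mathbb{H}_\kappa}$ the plump operator satisfies $\check{\beta}^\pl=\bigcup_{\gamma\in\check\beta}(\check\gamma^\pl)^\pls$, and plump addition gives $\check\alpha^\pl+\check\beta^\pl=\check\alpha^\pl\cup\bigcup_{\eta\in\check\beta^\pl}(\check\alpha^\pl+\eta)^\pls$. We split into three cases.
\begin{itemize}
\item If $\beta=0$, both sides equal $\check\alpha^\pl$.
\item If $\beta=\beta'+1$, we have $\check\beta^\pl=\bigcup_{\gamma\le\beta'}(\check\gamma^\pl)^\pls$, and by \autoref{lem:pl-subset-retract} (applied in the model to the thin ordinal $\check\beta$) this is $(\check{\beta'}^\pl)^\pls$. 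We then show $\check\alpha^\pl+(\mu)^\pls=(\check\alpha^\pl+\mu)^\pls$ for plump $\mu$. The inclusion $\supseteq$ follows from monotonicity of $\eta\mapsto\check\alpha^\pl+\eta$ (an easy induction) together with the fact that $\check\alpha^\pl\subseteq\check\alpha^\pl+\mu$. The inclusion $\subseteq$ holds because each $(\check\alpha^\pl+\eta)^\pls$ with $\eta\subseteq\mu$ is contained in $(\check\alpha^\pl+\mu)^\pls$, since plump successor is monotone on plump ordinals. The induction hypothesis $\check{(\alpha+\beta')}^\pl=\check\alpha^\pl+\check{\beta'}^\pl$ then yields $\check{(\alpha+\beta)}^\pl=(\check{(\alpha+\beta')}^\pl)^\pls=\check\alpha^\pl+\check\beta^\pl$.
\item If $\beta$ is a limit, we use $\check\beta^\pl=\bigcup_{\gamma\in\check\beta}\check\gamma^\pl$, together with continuity $\check\alpha^\pl+\bigcup_i\mu_i=\bigcup_i(\check\alpha^\pl+\mu_i)$ for directed unions. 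Continuity is immediate from the definition, since each $\eta\in\bigcup\mu_i$ lies in some $\mu_i$. Finally $\check{(\alpha+\beta)}=\bigcup_{\gamma\in\beta}\check{(\alpha+\gamma)}$ gives the claim by the check-transfer fact and the induction hypothesis.
\end{itemize}

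Multiplication is handled identically. The successor step uses $\check\alpha^\pl\cdot\mu^\pls=\check\alpha^\pl\cdot\mu+\check\alpha^\pl$ (a direct unfolding of the definition plus monotonicity) together with the already-proved additive case. The limit step uses continuity of $\eta\mapsto\check\alpha^\pl\cdot\eta$.

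The main obstacle is the successor case. A general plump $\mu^\pls$ is not a thin successor, so we must check carefully that the definitional union $\bigcup_{\eta\in\mu^\pls}$ collapses to the single top term. This requires monotonicity lemmas for $+$, $\cdot$ and $(-)^\pls$ proved in $\mathrm{IKP}+\mathrm{PlB}\Sigma_1$ inside the model, and we must also verify that the identity $(\check{\beta'}+1)^\pl=(\check{\beta'}^\pl)^\pls$ really holds in $V^{\mathbb{H}_\kappa}$. For the latter we would first try to derive it from \autoref{lem:pl-subset-retract}: the terms $(\check\gamma^\pl)^\pls$ with $\gamma\le\beta'$ are nested, because $\check\gamma\subseteq\check{\beta'}$ is forced, so they are all contained in the top one.
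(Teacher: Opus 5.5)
Your proposal is correct and follows the paper's proof: an induction on $\beta$ whose successor step uses $\check{\beta}^\pl = (\check{\beta'}^\pl)^\pls$ and whose limit step uses $\check{\beta}^\pl = \bigcup_{\gamma\in\check{\beta}}\check{\gamma}^\pl$. These are exactly the paper's facts (i) and (ii), and you additionally spell out the absorption identities $\check{\alpha}^\pl+\mu^\pls=(\check{\alpha}^\pl+\mu)^\pls$, $\check{\alpha}^\pl\cdot\mu^\pls=\check{\alpha}^\pl\cdot\mu+\check{\alpha}^\pl$ and the continuity facts that the paper leaves implicit. Two minor points: (i) needs only the trivial monotonicity of $(-)^\pl$, so neither thinness nor \autoref{lem:pl-subset-retract} is required; and in the additive absorption identity the $\supseteq$ direction is just $\mu\in\mu^\pls$, while monotonicity and $\check{\alpha}^\pl\subseteq\check{\alpha}^\pl+\mu$ are what give $\subseteq$.
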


\begin{proof}
  This follows from simple induction on $\beta$, using the basic facts in $\mathrm{IKP} + \mathrm{PlUb}$ that
  \begin{enumerate}[label=(\roman*)]
    \item For any $\alpha \in \Ord$, $\left(\alpha^+\right)^\pl = \left(\alpha^\pl\right)^\pls$;
    \item For any $s \subseteq \Ord$, $\left(\bigcup s\right)^\pl = \bigcup_{\alpha \in s} \alpha^\pl$. \qedhere
  \end{enumerate}
\end{proof}

\begin{proposition}[$\mathrm{ZFC}$]
  \label{prop:force-set-in-l}
  Fix any set $x$, then for any large enough cardinal $\kappa$ we have
  \[V^{\mathbb{H}_\kappa} \vDash \check{x} \in L_\pl.\]
\end{proposition}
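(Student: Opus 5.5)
We prove the statement by \(\in\)-induction on \(x\) in the classical universe: it suffices to show that whenever every \(y\in x\) satisfies \(V^{\mathbb{H}_\kappa}\vDash \check y\in L_\pl\), then so does \(x\). Since \(\check x=\{\check y:y\in x\}\), inside \(V^{\mathbb{H}_\kappa}\) we then have \(\check x\subseteq L_\pl\). Because \(V^{\mathbb{H}_\kappa}\vDash\mathrm{IZF}\), the axiom \(\mathrm{PlB}\Sigma_1\) holds there (it follows from \(\mathrm{IKP}(\mathcal{P})\)). Hence, as in the proof of \autoref{cor:many-pl-incom-v-eq-l-pl}, \(\check x\subseteq L_\theta\) for some plump \(\theta\) in the forcing model. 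The remaining task is to show that every subset of \(L_\theta\) lying in the ground-coded range is in \(L_\pl\). Note that \(\kappa\) must not depend on the induction step, so instead of literally inducting we will run the argument uniformly with a single \(\kappa\).

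\textbf{Approach.} Fix \(x\) and choose \(\mu\) with \(x\in V_\mu\). We argue that \(V^{\mathbb{H}_\kappa}\vDash \check V_\mu\subseteq L_\pl\) for \(\kappa\ge|V_\mu|\). Enumerate \(V_\mu\) classically by a bijection \(e:\kappa'\to V_\mu\) with \(\kappa'\le\kappa\). The key point is that subsets of \(\check\kappa\) which come from the ground model are coded into \(L_\pl\) via \autoref{thm:powerset-plump-in-l}. For any \(a\subseteq\kappa\) in \(V\), the set \(\{\alpha^\pl:\alpha\in\check a\}\) is a subset of \(s\), so it lies in \(L_\pl\) in \(V^{\mathbb{H}_\kappa}\). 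Moreover, by \autoref{lem:pl-subset-retract} the map \(\alpha\mapsto\alpha^\pl\) on \(\check\kappa\) is injective, and \(\check\alpha^\pl\) is definable in \(L_\pl\).

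We then transfer the classical structure \(\langle\kappa', E\rangle\), where \(E=\{\langle\alpha,\beta\rangle: e(\alpha)\in e(\beta)\}\), into \(L_\pl\). Code pairs using \autoref{lem:arith-pl-commute}: the classical pairing \(\langle\alpha,\beta\rangle\mapsto\kappa\cdot\beta+\alpha\) corresponds to \(\check\kappa^\pl\cdot\check\beta^\pl+\check\alpha^\pl\). Thus \(E\) becomes a subset \(c\subseteq\kappa\) (taking \(\kappa\) with \(\kappa\cdot\kappa=\kappa\)). Its image \(\{\gamma^\pl:\gamma\in\check c\}\in L_\pl\) lets \(L_\pl\) recover the relation \(\check E'\) on \(s\), where \(\check E'\) is \(E\) transported along \((-)^\pl\). \autoref{cor:pl-ord-code-pairs} guarantees that this decoding is unambiguous.

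Next we take the Mostowski collapse inside \(L_\pl\). The relation \(\check E'\) is well-founded and extensional in \(V^{\mathbb{H}_\kappa}\), since these are \(\Delta_0\) properties of a check-name of a classically well-founded relation, and they are preserved. So \(L_\pl\), which satisfies \(\mathrm{IKP}\) by \autoref{cor:l-pl-ikp-inner-model}, can form the collapse \(\pi\) by \(\Sigma\)-recursion, with \(\pi(\alpha^\pl)=\{\pi(\beta^\pl):\beta^\pl\mathrel{E'}\alpha^\pl\}\). By \(\in\)-induction in \(V\) we get \(V^{\mathbb{H}_\kappa}\vDash\pi(\check\alpha^\pl)=\check{e(\alpha)}\), because \(\check y\) is exactly the collection of \(\check z\) for \(z\in y\). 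Hence \(\check x\), being \(\pi(\check\alpha^\pl)\) for \(\alpha=e^{-1}(x)\), is in \(L_\pl\).

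\textbf{Main obstacle.} I expect the hardest step to be checking that the collapse in \(L_\pl\) agrees with the collapse in \(V^{\mathbb{H}_\kappa}\) and produces the check-names. This needs absoluteness of \(\Sigma\)-recursion between the transitive inner class \(L_\pl\) and \(V^{\mathbb{H}_\kappa}\). It also needs the forcing model to recognise that \(E'\)-predecessors of \(\check\alpha^\pl\) are exactly the \(\check\beta^\pl\) with \(\langle\beta,\alpha\rangle\in E\); this requires that every element of \(s\) is of the form \(\check\beta^\pl\), which holds by the definition of \(s\). A secondary check is that \(\kappa\) must be large enough both for \(|V_\mu|\le\kappa\) and for the pairing to stay inside \(\kappa\). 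Any infinite cardinal \(\kappa\ge|V_\mu|\) satisfies both.
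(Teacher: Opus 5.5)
Your strategy is the paper's: enumerate a transitive set containing $x$ by a ground ordinal, code its $\in$-relation as a ground set of ordinals via ordinal arithmetic, push it through $(-)^{\pl}$ with \autoref{lem:arith-pl-commute}, land in $L_\pl$ via \autoref{thm:powerset-plump-in-l}, and collapse inside $L_\pl$. However, the coding step fails as written. You code $\langle\alpha,\beta\rangle$ as $\kappa\cdot\beta+\alpha$ and assert that the codes form a set $c\subseteq\kappa$, ``taking $\kappa$ with $\kappa\cdot\kappa=\kappa$''. The arithmetic that \autoref{lem:arith-pl-commute} transfers, and that $L_\pl$ can decode from plump parameters, is ordinal arithmetic. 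There $\kappa\cdot\beta+\alpha\geq\kappa$ for every $\beta\geq 1$, and $\kappa\cdot\kappa=\kappa$ fails for every $\kappa\geq 2$; you are conflating ordinal and cardinal products. Hence $\{\gamma^{\pl}:\gamma\in\check{c}\}\not\subseteq s=\{\alpha^{\pl}:\alpha\in\check{\kappa}\}$, and \autoref{thm:powerset-plump-in-l} does not apply. Your closing claim that any infinite cardinal $\kappa\geq|V_\mu|$ suffices is also false. If the enumeration has length $\kappa$ itself, any pairing $\mu\cdot\beta+\alpha$ that is injective on $\kappa\times\kappa$ needs $\mu\geq\kappa$, so its values leave $\kappa$.

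The repair is the paper's choice of parameters. Enumerate by an infinite cardinal $\lambda$; the paper uses $\mathrm{trcl}(\{x\})$, but your $V_\mu$ works equally well. Take $\kappa>\lambda$ (the paper takes $\kappa=\lambda^+$) and code by $\lambda\cdot\beta+\alpha$ with $\alpha,\beta\in\lambda$. These codes lie below the ordinal $\lambda\cdot\lambda$, which has cardinality $\lambda$ and hence lies below $\kappa$. $L_\pl$ decodes them via $\check{\lambda}^{\pl}\cdot\beta'+\alpha'$ with $\alpha'\in\check{\lambda}^{\pl}$, which is exactly the situation of \autoref{cor:pl-ord-code-pairs}.

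A smaller point: well-foundedness of $E'$ is not a $\Delta_0$ property, so your justification for forming the collapse is wrong (and extensionality is never needed). You can instead derive induction along $E'$ from the fact that, in $V^{\mathbb{H}_\kappa}$, $E'$ is isomorphic to $\in$ on $\check{V_\mu}$. Alternatively, do as the paper does: define the collapse $h$ in $L_\pl$ as a partial $\Sigma^{\pl}$-defined function, and prove by set induction on $u$ that $g(\alpha)=u$ implies $h(\alpha)$ is defined and equals $u$. With these changes your argument coincides with the paper's.
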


\begin{proof}
  Using choice, we fix a cardinal $\lambda$ with a surjection $f : \lambda \rightarrow \mathrm{trcl}\mleft(\mleft\{x\mright\}\mright)$ onto the transitive closure of $\left\{x\right\}$, and we can assume without loss of generality that $f\mleft(\varnothing\mright) = x$. Let $\kappa$ be the cardinal successor of $\lambda$, and consider the set
  \[\tau = \left\{\lambda \cdot \alpha + \beta : \alpha, \beta \in \lambda, f\mleft(\alpha\mright) \in f\mleft(\beta\mright)\right\} \subseteq \kappa.\]

  We now work inside $V^{\mathbb{H}_\kappa}$ with the names $\check{f}$ and $\check{\tau}$. We construct $\tilde{\tau} = \left\{\alpha^\pl : \alpha \in \check{\tau}\right\}$ and clearly $\tilde{\tau} \subseteq \left\{\alpha^\pl : \alpha \in \check{\kappa}\right\}$. Thus by \autoref{thm:powerset-plump-in-l}, we have $\tilde{\tau} \in L_\pl$.

  We define
  \[g = \left\{\tuple{\alpha^\pl, y} : \tuple{\alpha, y}\in \check{f}\right\},\]
  then $V^{\mathbb{H}_\kappa}$ recognised that $g$ is a well-defined function on domain $s = \left\{\alpha^\pl : \alpha \in \check{\lambda}\right\}$ by \autoref{lem:pl-subset-retract}. By \autoref{lem:arith-pl-commute}, we also have
  \[V^{\mathbb{H}_\kappa} \vDash \tilde{\tau} = \left\{\check{\lambda}^\pl \cdot \alpha + \beta : \alpha, \beta \in s, g\mleft(\alpha\mright) \in g\mleft(\beta\mright)\right\}.\]

  Now, since also $s \in L_\pl$ trivially by \autoref{thm:powerset-plump-in-l}, we can define inside $L_\pl$ a partial function $h$ on domain $s$ satisfying the recursive condition
  \[h\mleft(\alpha\mright) = \left\{h\mleft(\beta\mright) : \beta \in s, \check{\lambda}^\pl \cdot \beta + \alpha \in \tilde{\tau}\right\}\]
  if $h\mleft(\beta\mright)$ exists for all such $\beta$, through a $\Sigma^\pl$-formula. Using the fact that $L_\pl$ is a model of $\mathrm{IKP} + \mathrm{PlB}\Sigma_1$, we can prove by set induction on $u$ that, for any set $u$, if $g\mleft(\alpha\mright) = u$, then $h\mleft(\alpha\mright)$ exists and equals $u$ as well. Specifically, we know that $g\mleft(\varnothing\mright) = \check{x}$ and $h \in L_\pl$, thus $\check{x} = h\mleft(\varnothing\mright) \in L_\pl$ as desired.
\end{proof}

\begin{corollary}[$\mathrm{ZFC}$]
  Fix any set $x$, then for any large enough cardinal $\kappa$ we have
  \[V^{\mathbb{H}_\kappa} \vDash \mathcal{P}\mleft(\check{x}\mright) \in L_\pl.\]
  Since $L_\pl \subseteq L$, we have $V^{\mathbb{H}_\kappa} \vDash \mathcal{P}\mleft(\check{x}\mright) \in L$ as well.
\end{corollary}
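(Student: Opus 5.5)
The plan is to reduce $\mathcal{P}(\check{x})$ to a single subset of the coding set $s$ from \autoref{thm:powerset-plump-in-l}, and then decode it inside $L_\pl$ exactly as in \autoref{prop:force-set-in-l}. We run the construction of \autoref{prop:force-set-in-l} with the same data. Fix a cardinal $\lambda$ and a surjection $f : \lambda \rightarrow \mathrm{trcl}(\{x\})$ with $f(\varnothing) = x$. Let $\kappa = \lambda^+$, and let $\tau \subseteq \kappa$ code the membership relation on $\mathrm{trcl}(\{x\})$. Working in $V^{\mathbb{H}_\kappa}$, \autoref{prop:force-set-in-l} already gives $\check{x} \in L_\pl$. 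More precisely, its proof produces $s_\lambda = \{\alpha^\pl : \alpha \in \check{\lambda}\}$, the function $g$ with $g(\alpha^\pl) = \check{f}(\alpha)$, and a function $h \in L_\pl$ agreeing with $g$ on $s_\lambda$. This is the step requiring care: one needs $h = g$ on all of $s_\lambda$, which holds because every element of $\mathrm{trcl}(\{x\})$ is some $g(\alpha^\pl)$, by the set induction argument stated there.

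Next, take an arbitrary $y \subseteq \check{x}$ in $V^{\mathbb{H}_\kappa}$, with $y$ not necessarily a check-name. Consider
\[t_y = \left\{\alpha^\pl : \alpha \in \check{\lambda},\ \check{f}(\alpha) \in y\right\} \subseteq s_\lambda \subseteq \left\{\alpha^\pl : \alpha \in \check{\kappa}\right\}.\]
This is a set by $\Delta_0$-separation over the set $s_\lambda$, and \autoref{thm:powerset-plump-in-l} gives $t_y \in L_\pl$. Inside $L_\pl$, which satisfies $\mathrm{IKP} + \mathrm{PlB}\Sigma_1$ by \autoref{cor:l-pl-ikp-inner-model}, we then form $\{h(\beta) : \beta \in t_y\} \cap \check{x}$ by replacement. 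Since $h = g$, this set equals $y$: every element of $y$ lies in $x \subseteq \mathrm{trcl}(\{x\})$ and is therefore hit by $g$, and intersecting with $\check{x}$ removes nothing extra. Hence $\mathcal{P}(\check{x}) \subseteq L_\pl$.

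It remains to show that $\mathcal{P}(\check{x})$ is itself an element of $L_\pl$. Two routes are available, and I would pursue both.

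\begin{itemize}
\item \emph{Via a plump bound.} The uniformity above suggests that every $y \subseteq \check{x}$ lies in $L_\alpha$ for one fixed plump $\alpha$. Then $\mathcal{P}(\check{x}) = \{z \in L_\alpha : z \subseteq \check{x}\}$ is $\Delta_0$-definable over $L_\alpha$, hence belongs to $\dd(L_\alpha) \subseteq L_{\alpha^\pls}$, using $\mathrm{PlUb}$. To get the bound, apply $\mathrm{PlB}\Sigma_1$ to the statement "for every $y \in \mathcal{P}(\check{x})$ there is a plump $\alpha$ with $y \in L_\alpha$". This uses that $\mathcal{P}(\check{x})$ is a set in $V^{\mathbb{H}_\kappa} \vDash \mathrm{IZF}$, and that $\mathrm{PlB}\Sigma_1$ holds there because of full powerset (as remarked after \autoref{cor:l-pl-ikp-inner-model}).
\item \emph{Via a direct decoding.} Apply \autoref{thm:powerset-plump-in-l} more strongly: it gives $\mathcal{P}(s) \in L_\pl$, so $\mathcal{P}(s_\lambda) = \mathcal{P}(s) \cap \mathcal{P}(s_\lambda)$ lies in $L_\pl$ by $\Delta_0$-separation. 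Then
\[\mathcal{P}(\check{x}) = \left\{ \{h(\beta) : \beta \in t\} \cap \check{x} : t \in \mathcal{P}(s_\lambda)\right\},\]
which is in $L_\pl$ by $\Sigma$-replacement inside $L_\pl$, using $h \in L_\pl$. This route avoids the bounding argument entirely, and I would present it as the main proof.
\end{itemize}

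The second route has one delicate point: the subsets of $s_\lambda$ computed in $L_\pl$ must coincide with the true ones. They do, because \autoref{thm:powerset-plump-in-l} provides $\mathcal{P}(s)$ as computed in $V^{\mathbb{H}_\kappa}$, and that set is an element of $L_\pl$.

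Finally, $L_\pl \subseteq L$ because $L_\pl = \bigcup_{\alpha \in \PlOrd} L_\alpha$ and $\PlOrd \subseteq \Ord$. This yields the last clause, $V^{\mathbb{H}_\kappa} \vDash \mathcal{P}(\check{x}) \in L$.
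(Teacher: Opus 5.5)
Your proof is correct, but your decoding runs in the opposite direction from the paper's.

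\textbf{The paper's route.} It fixes an \emph{injection} $f : x \to \lambda$ and applies \autoref{prop:force-set-in-l} as a black box to the set $f$, getting $\check{f} \in L_\pl$. It then defines $\tilde{f}(z) = \check{f}(z)^\pl$ inside $L_\pl$, which uses that $L_\pl$ computes plump ordinals, and hence $(-)^\pl$, correctly. A subset $y \subseteq \check{x}$ is sent to its forward image $\tilde{f}[y] \subseteq s$, which lies in $L_\pl$ by \autoref{thm:powerset-plump-in-l}. Finally $y$ is recovered as a preimage, which needs injectivity of $\tilde{f}$ via \autoref{lem:pl-subset-retract}.

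\textbf{Your route.} You reuse the \emph{surjection} $\lambda \to \mathrm{trcl}(\{x\})$ and the decoder $h$ from inside the proof of \autoref{prop:force-set-in-l}. The subset $y$ goes to its preimage $t_y \subseteq s_\lambda$ and comes back as the forward image $h[t_y]$. Injectivity is thus replaced by surjectivity together with $h = g$ on $s_\lambda$.

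\textbf{What each buys.} The paper's argument is modular, since only the statement of the proposition is used. The price is a second application of that proposition (to $f$) and an absoluteness check for $(-)^\pl$ in $L_\pl$. Yours needs only the $\kappa$ already used for $\check{x} \in L_\pl$. However, it depends on a stronger fact that is established in that proof but not recorded in its statement: $h \in L_\pl$ is total on $s_\lambda$ and agrees with $g$, so in effect $g \in L_\pl$.

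\textbf{The final step.} Your first route to $\mathcal{P}(\check{x}) \in L_\pl$ is exactly the paper's ``by collection''. Your second route gives a pleasant explicit formula for $\mathcal{P}(\check{x})$, but it does not actually avoid bounding. \autoref{thm:powerset-plump-in-l} is derived from \autoref{thm:l-pl-code-power}, which only yields $\mathcal{P}(s) \subseteq L_\pl$. Upgrading this to $\mathcal{P}(s) \in L_\pl$ requires the same $\mathrm{PlB}\Sigma_1$/collection argument as your first route, applied to $s$ instead of $\check{x}$. So the bounding step is relocated, not eliminated.
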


\begin{proof}
  Using choice, we fix a cardinal $\lambda$ with an injection $f : x \rightarrow \lambda$. By \autoref{prop:force-set-in-l}, we can choose $\kappa \geq \lambda$ large enough so that $V^{\mathbb{H}_\kappa} \vDash \check{f} \in L_\pl$. Since \autoref{cor:l-preserve-pl-ord} guarantees that $L_\pl$ is correct about plump ordinals, we can define inside $L_\pl$ the function $\tilde{f} : \check{x} \rightarrow \PlOrd$ as
  \[\tilde{f}\mleft(z\mright) = \check{f}\mleft(z\mright)^\pl,\]
  so that the range of $\tilde{f}$ is a subset of $s = \left\{\alpha^\pl : \alpha \in \check{\kappa}\right\}$.

  Now, observe it suffices to show in $V^{\mathbb{H}_\kappa}$ that $\mathcal{P}\mleft(\check{x}\mright) \subseteq L_\pl$, as $\mathcal{P}\mleft(\check{x}\mright) \in L_\pl$ follows by collection. Consider any subset $y \subseteq \check{x}$, then by \autoref{thm:powerset-plump-in-l} we have
  \[u = \left\{\tilde{f}\mleft(z\mright) : z \in y\right\} \in L_\pl\]
  since $u \subseteq s$. It follows that $v = \left\{z \in \mathrm{dom}\mleft(\tilde{f}\mright) : \tilde{f}\mleft(z\mright) \in u\right\} \in L_\pl$. Notice that $\check{f}$ is trivially injective, and thus so is $\tilde{f}$ by \autoref{lem:pl-subset-retract}. Hence we have $y = v \in L_\pl$ as desired.
\end{proof}

\bibmain

\end{document}